\newtheorem{thm}{Theorem}[section]
\newtheorem{prop}[thm]{Proposition}
\newtheorem{prob}[thm]{Problem}
\newtheorem{lem}[thm]{Lemma}
\newtheorem{cor}[thm]{Corollary}
\newtheorem{defn}[thm]{Definition}
\newtheorem{rem}[thm]{Remark}
\newcommand{\essinf}{\operatorname*{ess\,inf}}
\newcommand{\esssup}{\operatorname*{ess\,sup}}
\newcommand{\indicatornoacc}[1]{ \mathds{1}_{ #1 } }
\newcommand{\N}{\mathbb N}
\newcommand{\plog}{p_0}
\newcommand{\R}{\mathbb R}
\newcommand{\sign}{\operatorname{sign}}
\newcommand{\supp}{\operatorname{supp}}
\newcommand{\xiext}{\zeta}
\newcommand{\xto}[1]{\xrightarrow{#1}}
\DeclareFontFamily{U}{mathx}{\hyphenchar\font45}
\DeclareFontShape{U}{mathx}{m}{n}{
      <5> <6> <7> <8> <9> <10>
      <10.95> <12> <14.4> <17.28> <20.74> <24.88>
      mathx10
      }{}
\DeclareSymbolFont{mathx}{U}{mathx}{m}{n}
\DeclareMathAccent{\widecheck}{0}{mathx}{"71}
\setlist{noitemsep} 
\begin{document}

\newcommand{\Vext}{U}
\newcommand{\Vreg}{V_{ \operatorname{reg} }}
\newcommand{\Vextt}{\tilde U}
\newcommand{\kextt}{\ell}
\newcommand{\Vregt}{\tilde V_{ \operatorname{reg} }}
\newcommand{\rhot}{\tilde \rho}
\newcommand{\Vt}{\tilde V}
\newcommand{\Et}{\tilde E}
\newcommand{\Ct}{\tilde C}
\newcommand{\phit}{\tilde \phi}

\title{Regularity of the minimiser of one-dimensional interaction energies}

\author{M.~Kimura, P.~van Meurs}

\date{}

\maketitle 

%

\begin{abstract}
We consider both the minimisation of a class of nonlocal interaction energies over non-negative measures with unit mass and a class of singular integral equations of the first kind of Fredholm type. Our setting covers applications to dislocation pile-ups, contact problems, fracture mechanics and random matrix theory. Our main result shows that both the minimisation problems and the related singular integral equations have the same unique solution, which provides new regularity results on the minimiser of the energy and new positivity results on the solutions to singular integral equations.
\end{abstract}


\section{Introduction}
\label{sec:intro}

We consider the minimisation problem of the energy
\begin{equation} \label{for:defn:ER}
   E : \mathcal P (\R) \to [0, \infty], \qquad
   E (\rho) = \frac12 \iint_{\R \times \R} V ( t -  s) \, d (\rho \otimes \rho) ( s,  t) + \int_\R \Vext (t) \, d\rho (t), 
\end{equation}
where $\mathcal P (\R)$ is the space of probability measures, 
$V$ is an interaction potential which describes repulsive, nonlocal interactions,
and $\Vext$ is a confining potential. Figure \ref{fig:V} illustrates typical examples of $V$ and $\Vext$. The main assumptions on $V$ are that $V \in L^1(\R)$ is even on $\R$, that $V(r) \to +\infty$ as $r \to 0$, and that $V$ is non-increasing and convex on $(0,\infty)$. The main assumptions on $\Vext$ are that it is convex and $[0, \infty]$-valued. The precise assumptions on $V$ and $\Vext$ are given in Section \ref{ss:extn:R}.

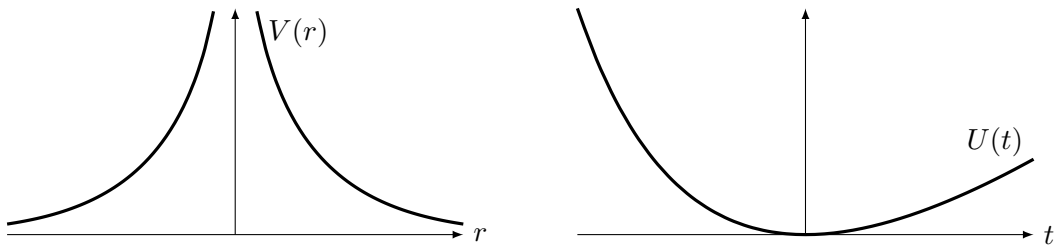
\begin{figure}[h]
\centering
\begin{tikzpicture}[scale=1.5, >= latex]    
\def \w {2}
        
\draw[->] (0,0) -- (0,\w);
\draw[->] (-\w,0) -- (\w,0) node[right] {$r$};
\draw[domain=0.19:\w, smooth, very thick] plot (\x,{ \x * cosh(\x) / sinh(\x) - ln( 2 * sinh(\x) ) });
\draw[domain=-\w:-0.19, smooth, very thick] plot (\x,{ \x * cosh(\x) / sinh(\x) - ln( -2 * sinh(\x) ) });
\draw (0.2, \w) node[anchor = north west]{$V(r)$};

\begin{scope}[shift={(2.5*\w,0)},scale=1]
\draw[->] (0,0) -- (0,\w);
\draw[->] (-\w,0) -- (\w,0) node[right] {$t$};
\draw[domain=-\w:\w, smooth, very thick] plot (\x,{ 4*(4/(\x + 4) + \x/4 -1) });
\draw (\w, .6) node[anchor = south east]{$\Vext(t)$};
\end{scope}
\end{tikzpicture} \\
\caption{Typical examples of $V$ and $\Vext$.}
\label{fig:V}
\end{figure}

Due to the convexity assumptions on $V$ and $\Vext$, uniqueness of the minimiser ${\overline \rho}$ of $E$ can be proven by standard methods (see, e.g., Proposition \ref{prop:P61:R}). Our main interest is in the regularity properties of ${\overline \rho}$. We will prove (Theorem \ref{t} and \ref{t:R}) that the support of ${\overline \rho}$ is a bounded interval $[t_1, t_2]$, that ${\overline \rho}$ is characterised as the solution to a singular integral equation, and that ${\overline \rho}$ on $(t_1, t_2)$ is as regular as $V'$ (away from $0$) and $\Vext'$. 

The regularity of ${\overline \rho}$ around $t_1$ and $t_2$ is more subtle, and depends on whether $\Vext$ attains the value $+\infty$. Since $\Vext$ is convex, this jump can occur at most twice, say at $-\infty \leq s_1 < s_2 \leq \infty$. We prefer to think of $s_1$ and $s_2$ as barriers which result in the hard constraint that the support of ${\overline \rho}$ is contained in $[s_1, s_2]$. Figure \ref{fig:rho:3} shows typical profiles of ${\overline \rho}$ in the case of two, one or no barriers, where $\Vext$ is chosen such that the support of ${\overline \rho}$ stretches until the barriers.

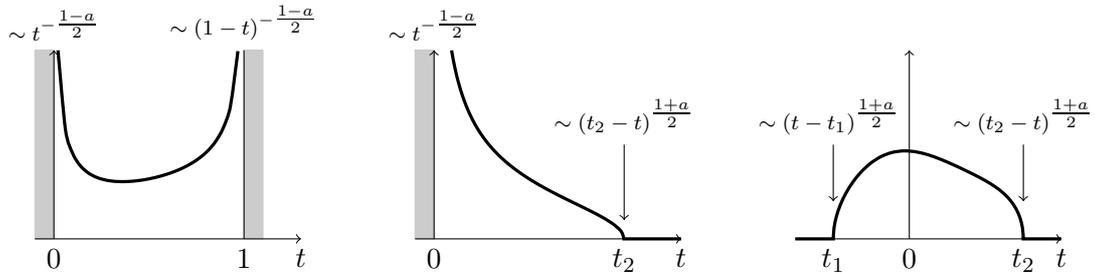
\begin{figure}[h]
\centering
\begin{tikzpicture}[scale=2.5]
\def \a {0.1}
\def \b {1}
\def \c {0.2}
\def \d {0.05}
\def \e {0.3}
\def \f {2}
\def \g {.009}
\def \lgray {black!20!white}

\begin{scope}[shift={(0,0)},scale=1]
  \filldraw[draw=\lgray,fill=\lgray] (-\a,0) rectangle (0, \b);
  \filldraw[draw=\lgray,fill=\lgray] (1,0) rectangle (1+\a, \b); 
  \draw[->] (-\a,0) -- (1+\e,0) node[below] {$t$};
  \draw[->] (0,0) node[below] {$0$} -- (0, \b) node[above] {\scriptsize $\sim t^{-\tfrac{1-a}2}$};
  \draw (1,0) node[below] {$1$} -- (1, \b)  node[above] {\scriptsize $\sim (1-t)^{-\tfrac{1-a}2}$};
  
  \draw[domain=.02156:.96815, smooth, very thick] plot (\x,{ 1/( 2*pi*sqrt(\x*(1-\x)) ) + \c*(\x - .5)});
\end{scope}

\begin{scope}[shift={(\f,0)},scale=1]
  \filldraw[draw=\lgray,fill=\lgray] (-\a,0) rectangle (0, \b); 
  \draw[->] (-\a,0) -- (1+\e,0) node[below] {$t$};
  \draw[->] (0,0) node[below] {$0$} -- (0, \b) node[above] {\scriptsize $\sim t^{-\tfrac{1-a}2}$};
  \draw (1,0) node[below] {$t_2$};
  \draw[->] (1,.5) node[above] {\scriptsize $\sim (t_2 - t)^{\tfrac{1+a}2}$} --++ (0,-.4);
  \draw[very thick] (1 - \g,0) -- (1+\e,0);
  
  \draw[domain=0:1, smooth, very thick] plot ({ 1/( 1 + (pi*\x)^2 ) }, \x);
\end{scope}

\begin{scope}[shift={(2*\f + .1, 0)},scale=1] 
  \draw[->] (-.2,0) -- (1.2,0) node[below] {$t$};
  \draw[->] (.4,0) node[below] {$0$} -- (.4, \b);
  \draw (0,0) node[below] {$t_1$};
  \draw (1,0) node[below] {$t_2$};
  \draw (.4,.5) node[anchor = south east] {\scriptsize $\sim (t - t_1)^{\tfrac{1+a}2}$};
  \draw[->] (0,.5) --++ (0,-.3);
  \draw[->] (1,.5) node[above] {\scriptsize $\sim (t_2 - t)^{\tfrac{1+a}2}$} --++ (0,-.3);
  \draw[very thick] (1 - \g,0) -- (1.2,0);
  \draw[very thick] (-.2,0) -- (\g,0);
  
  \draw[domain=0:pi, smooth, very thick] plot ({ ( 1+cos(180*\x/pi) )/2 },{ 1.4*sin(180*\x/pi)/pi - \d*sin( 180*cos(180*\x/pi) ) });
\end{scope}
\end{tikzpicture} \\
\caption{Typical profiles of the minimiser ${\overline \rho}$ of $E$ in the case of two, one, and no barriers (i.e., the points $s_i$ where $\Vext$ jumps from a finite value to $+\infty$). The value $a \in [0,1)$ is related to the singularity of $V$ around $0$; see \eqref{Va}. }
\label{fig:rho:3}
\end{figure}

The remainder of the introduction is organised as follows. After describing the applications of the regularity of ${\overline \rho}$, we separate two cases. In the first case, we consider $\Vext$ to have two barriers, and impose a rather artificial condition on $\Vext$ to ensure that ${\overline \rho}$ is as in the first of the three plots in Figure \ref{fig:rho:3}. The statement of the regularity results (Theorem \ref{t}) and the analysis turns out to be the easiest in this case. In the second case we consider $\Vext$ to be $[0, \infty)$-valued (i.e., no barriers). The treatment of this case builds further on the previous case, and requires additional arguments for the regularity of ${\overline \rho}$ around the endpoints $t_i$ of its support (Theorem \ref{t:R}). In the discussion afterwards we outline how the results of both cases can be applied to the general setting in which no conditions on $-\infty \leq s_1 < s_2 \leq \infty$ are put. Relying on this generalisation of our regularity results, we demonstrate how several limitations in the applications can be lifted.
%
%
%

\subsection{Applications}
\label{ss:applAi}

There is a wide range of applications for the minimisation problem of $E$ and the regularity of its minimiser ${\overline \rho}$. Most noteworthy is the application to interacting particle systems, where $E$ is the mean-field limit of a discrete interaction energy and $\rho$ is the (non-negative) particle density. See, e.g., \cite{SimioneSlepcevTopaloglu15} and the references therein for applications in statistical mechanics, models of collective behaviour of many-agent systems, granular media, self-assembly of nanoparticles, crystallization, and molecular dynamics simulations of matter.
While the one-dimensional scenario in \eqref{for:defn:ER} does not encompass the full complexity of many such particle system, it does capture, for instance, the log-gases studied in \cite{SandierSerfaty15} and the pile-ups of dislocations studied in \cite{GeersPeerlingsPeletierScardia13,HallChapmanOckendon10}. Since little is known on the regularity properties of ${\overline \rho}$, several results in these papers were not extended:
\begin{itemize}
  \item[(A1)] the applicability of the result in \cite{SandierSerfaty15} on crystallisation phenomena in log-gases; 
  \item[(A2)] the extension of the discrete-to-continuum convergence results in \cite{GeersPeerlingsPeletierScardia13} of dislocation pile-ups to convergence \emph{rates};
  \item[(A3)] the computation of the asymptotic expansions in \cite{HallChapmanOckendon10} of particle pile-ups to characterise boundary layers.
\end{itemize}
We show in Section \ref{s:appl} how our main results (Theorem \ref{t} and \ref{t:R}) lift the obstacles that impeded the progress on applications (A1)--(A3).

Outside of the setting of particle systems, and as the main application of \cite{SandierSerfaty15}, $E$ captures the setting in Random Matrix Theory (see \cite{AndersonGuionnetZeitouni10,Forrester10,Mehta04}) by putting $V(r) = -\log |r|$ and interpreting $\Vext$ as the so-called `external field'. In this setting, (the density of) ${\overline \rho}$ is called the 'density of states'. If ${\overline \rho}$ satisfies certain regularity properties (see, e.g., \cite{KuijlaarsMcLaughlin00}), then several statistics of the eigenvalues can be described as the size of the matrix tends to infinity. This poses the question of sufficient requirements on $\Vext$ such that ${\overline \rho}$ satisfies the desired regularity properties. The two main results on such requirements on $\Vext$ are stated in \cite{DeiftKriecherbauerMcLaughlin98,MhaskarSaff85}. In \cite{DeiftKriecherbauerMcLaughlin98} it is stated that, for \emph{analytic} $\Vext$ with appropriate growth conditions, the support of ${\overline \rho}$ is a finite union of closed intervals, that ${\overline \rho}$ is positive on the interior of those intervals, and that ${\overline \rho}$ `behaves like the square root' at the endpoints of each such interval. In \cite{MhaskarSaff85} the authors prove that for convex $\Vext$, the support of ${\overline \rho}$ is a single interval. As one application of our main result, we show that, for convex $\Vext$, the statements in \cite{DeiftKriecherbauerMcLaughlin98} on the regularity of ${\overline \rho}$ described above also hold when $\Vext$ is not analytic.

We also consider a less common application of $E$. As we will show in our analysis, ${\overline \rho}$ satisfies the singular integral equation of the first kind of Fredholm type given by
\begin{equation} \label{SIE}
  \text{p.v.}\int_{t_1}^{t_2} V'(t-s) {\overline \rho}(s) \, d s + \Vext' (t)  = 0
  \quad \text{for all } t_1 < t < t_2,
\end{equation}
where `p.v.' denotes the principle value integral. In fact, we show that ${\overline \rho}$ can be completely characterised as the solution to a singular integral equation with free boundaries. We refer to \cite{ChanFannjiangPaulino03,LifanovPoltavskiiVainikko03} and the references therein for applications of \eqref{SIE} to fracture mechanics, and to (\cite[\S 102]{Muskhelishvili53}) for applications to contact problems between two elastic bodies. 

\subsection{Case 1: two barriers and a fully supported minimiser}
\label{ss:Case1}

Let $\Vext$ have two barriers at $s_1 < s_2$. By using an affine variable transformation, we set $s_1 = 0$ and $s_2 = 1$. Then, we rewrite the minimisation problem of the energy in \eqref{for:defn:ER} as
\begin{equation} \label{for:defn:E:first}
   E (\rho) = \frac12 \iint_{[0,1]^2} V(t-s) \, d (\rho \otimes \rho) (s,t) + \int_{[0,1]} \Vext (t) \, d\rho (t)
\end{equation}
over the space $\mathcal P ([0,1])$. \medskip

\emph{Assumptions on the potentials $V$ and $\Vext$ in Case 1}. We assume that $V = V_a + \Vreg : [-1,1] \to [0, \infty]$ for some fixed $a \in [0,1)$, where $V_a$ is the Riesz potential given by
\begin{equation} \label{Va}
  V_a (r) := \left\{ \begin{aligned}
  &|r|^{-a} && \text{if } 0 < a < 1, \\
  &- \log |r| \: && \text{if } a = 0 
  \end{aligned}  \right. 
\end{equation}
and $\Vreg$ is the regular part which satisfies
\begin{subequations} \label{for:V:props}
\begin{gather} \label{for:V:props:lambda:psi}
  \Vreg \in \left\{ \begin{aligned}
    &W^{\ell+1,1} (-1,1)
    &&\text{if } 0<a<1, \\
    &W^{\ell+1,1} (-1,1) \cap W^{2,\plog} (-1,1)
    &&\text{if } a = 0
  \end{aligned} \right.
\end{gather}
for some integer $\ell \geq 1$ and some $1 < \plog < 2$.
We further assume that
\begin{gather} 
  V \text{ is even,}
  \quad V''(r) \geq 0 \ \text{for a.e.~} r \in (0,1), 
  \quad V(1) = -V'(1) \geq 0,  
  \\\label{for:V:props:lambda}
  \exists \, c, \varepsilon > 0 \ \forall \, r \in (0, \varepsilon) : \essinf_{0 < s < r} V''(s) \geq \frac c{ r^{2 + a} }.
\end{gather}
\end{subequations}
For the external potential $\Vext : [0,1] \to \R$ we assume that
\begin{equation} \label{for:Vext:assns}
  \Vext \in \left\{ \begin{aligned}
    &W^{\ell+1,1} (0,1)
    &&\text{if } 0<a<1 \\
    &W^{\ell+1,\plog} (0,1)
    &&\text{if } a = 0
  \end{aligned} \right\},
  \quad \Vext(t) \geq 0
  \quad \text{and} \quad \Vext''(t) \geq 0 \ \text{for a.e.~} t \in (0,1),
\end{equation}
where $\ell$ and $\plog$ are the same as in \eqref{for:V:props:lambda:psi}. Finally, we impose the artificial condition
\begin{equation} \label{VextV:bd}
  \sup_{(0,1)} |\Vext'| \leq |V'(1)|,
\end{equation}
which is sufficient for the support of the minimiser $\overline \rho$ to reach both barriers at $0$ and $1$.

Next we motivate several of the assumptions. The main assumptions are the regularity, the convexity and the splitting $V = V_a + \Vreg$. We motivate them in the sketch of the proof which follows Theorem \ref{t} below. The values of $\ell$ and $\plog$ regulate the regularity of $\Vreg$ and $\Vext$; for higher values, Theorem \ref{t} states stronger regularity results on $\overline \rho$. The condition $V'(1) \leq 0$, in addition to convexity, ensures that there are no attractive forces between particles. Because of the unit mass constraint, adding a constant to $V$ or $U$ is equivalent to adding a constant to the energy $E$. We choose this constant such that $V$ and $\Vext$ are non-negative. In particular, we tune this constant such that $V(1) = - V'(1)$ for convenience later on. \medskip

\emph{Proper definition of $E$}. Using assumptions \eqref{for:V:props} and \eqref{for:Vext:assns}, we show that \eqref{for:defn:E:first} is well-defined. Since $\Vext \in C([0,1])$, the second term in \eqref{for:defn:E:first} is finite for any $\rho \in \mathcal P([0,1])$. Since $V$ is lower semi-continuous and non-negative, the first term in \eqref{for:defn:E:first} is well-defined with values in $[0, \infty]$ for any $\rho \in \mathcal P([0,1])$\footnote{To prove this, take any continuous approximation of $V$ from below, and pass to the limit by using the Monotone Convergence Theorem.}. Likewise, the convolution 
\begin{equation} \label{convo:Vrho}
    (V*\rho)(t) := \int_{[0,1]} V(t-s) \, d\rho(s)
  \quad \text{with } \rho \in \mathcal P([0,1])  
\end{equation}
is well-defined as a lower semi-continuous function on $[0,1]$. In particular, we note that $E(\rho) = \infty$ whenever $\rho$ has an atom (i.e., a delta-peak). Neglecting measures with atoms, the notation simplifies to
\begin{equation} \label{for:defn:E}
   E (\rho) 
   = \frac12 \int_0^1 \int_0^1 V(t-s) \, d \rho(s) \, d \rho(t) + \int_0^1 \Vext (t) \, d\rho (t)
   = \frac12 \int_0^1 (V*\rho) \, d \rho + \int_0^1 \Vext \, d\rho.
\end{equation}
\medskip


\emph{Main result of Case 1}. First, we prepare the setting for stating the main result in this section, Theorem \ref{t}. Besides the regularity properties of $\overline \rho$, Theorem \ref{t} also states that $\overline \rho$ is completely characterised as either the minimiser of $E$, as the solution to a variational inequality, or as the solution to a weakly singular integral equation. We introduce the related three problems as Problems \ref{prob:minz}--\ref{prob:wSIE}, which we consider of independent interest on their own. In particular, Problem \ref{prob:wSIE} is the integrated version of \eqref{SIE}. 

Let $0 \leq a < 1$ be fixed, and $E$ be as in \eqref{for:defn:E} with the related potentials $V$, $\Vreg$ and $\Vext$ as in \eqref{for:V:props}, \eqref{for:Vext:assns} and \eqref{VextV:bd}.

\begin{prob}[Minimisation] \label{prob:minz}
  Find the minimiser of $E$ in $\mathcal P ([0,1])$.
\end{prob}

In view of \eqref{convo:Vrho}, we define for any $\rho \in \mathcal P ([0,1])$ the lower semi-continuous function 
\begin{equation*} 
  h_\rho := V*\rho + \Vext : [0,1] \to [0, \infty].
\end{equation*}

\begin{prob}[Variational inequality] \label{prob:VI}
  Find $\rho \in \mathcal P ([0,1])$ such that
  \begin{equation} \label{for:prop:E:on:P:EL:weak}
  \int_{[0,1]} h_{\rho} \, d \mu 
  \geq \int_{[0,1]} h_{\rho} d \rho
  \quad \text{for all } \mu \in \mathcal P ([0,1]) \text{ with } E(\mu) < \infty.
\end{equation}
\end{prob}    

To introduce the weakly singular integral equation, we first set up the functional framework. We define the fractional Sobolev space for $s > 0$ by
\begin{equation} \label{Hs}
  H^{-s} (\R) 
  := \big\{ \zeta \in \mathcal S'(\R) : {\textstyle \int_\R} (1 + \omega^2)^{-s} \big| \widehat \zeta (\omega) \big|^2 \, d\omega < \infty \big\},
\end{equation}
where $\widehat \zeta := \mathcal F \zeta$ is the Fourier transform (defined in Section \ref{s:prel}) and $\mathcal S'(\R)$ is the space of tempered distributions, i.e., the dual of the Schwartz space $\mathcal S(\R)$. With this interpretation, we define the subspace
 \begin{equation*} 
  H^{-s} (0,1) 
  := \big\{ \zeta \in H^{-s} (\R) : \supp \zeta \subset [0,1] \big\}.
\end{equation*}
In Section \ref{ssec:Vip} we show that, for $f, g \in L^2(0,1)$,
\begin{equation} \label{for:defn:Vip}
  (f, g)_V := \int_0^1 (V * f)(t) g (t) \, dt
\end{equation}
defines an inner product on $L^2(0,1)$, and that the norm induced by this inner product  
is equivalent to that of $H^{-(1-a)/2} (0,1)$. Hence, since the interaction term of $E(\rho)$ reads as $\frac12 (\rho, \rho)_V$, $H^{-(1-a)/2} (0,1)$ turns out to be the largest space to seek solutions to the weakly singular integral equation with finite energy. 

\begin{prob}[Weakly singular integral equation] \label{prob:wSIE}
Find the solution $(\rho, C)$ where $C \in \R$ and $\rho \in H^{-(1-a)/2} (0,1)$ with $\int_0^1 \rho = 1$ to
\begin{equation} \label{for:prop:E:on:P:EL:strong}
    h_\rho = C
    \quad \text{a.e.~on } (0,1).
\end{equation}
\end{prob}

We note that, in Problem \ref{prob:wSIE}, non-negativity of $\rho$ is not required. We give in Section \ref{ssec:Vip} a proper meaning to $\int_0^1 \rho = 1$ and $h_\rho$ for $\rho \in H^{-(1-a)/2}(0,1)$. 

\begin{thm}[Properties of the minimiser ${\overline \rho}$ in Case 1] \label{t} 
Let $a \in [0, 1)$. Let $V_a$ be as in \eqref{Va} and assume that $\Vreg$ and $\Vext$ satisfy \eqref{for:V:props}, \eqref{for:Vext:assns} and \eqref{VextV:bd} with corresponding integer $\ell \geq 1$ and $p_0 \in (1, 2)$. Then, all three Problems \ref{prob:minz}--\ref{prob:wSIE} have a unique solution, and all these solutions are equal. Let $(\overline \rho, \overline C)$ be this solution. Then, the support of $\overline \rho$ is $[0,1]$ and 
$$
  \overline C 
  = ( \overline \rho, \overline \rho)_V^2 + \int_0^1 \Vext \, d\overline \rho
  > 0. 
$$
On $[0,1]$, $\overline \rho$ has a density which satisfies
\begin{equation} \label{for:prop:E:on:P:regy:Hol}
  \left. \begin{array}{l}
     \overline \rho(t) = C_1 t^{- \tfrac{1-a}2} + R_1(t)  \\
     \overline \rho(t) = C_2 (1-t)^{- \tfrac{1-a}2} + R_2(1-t)
    \end{array} \right\}  
  \quad \text{for all $t \in [0,1]$}
\end{equation}
for some constants $C_i \geq 0$ and some functions $R_i$ satisfying
\begin{equation*}
  R_i \in \left\{ \begin{aligned}
    \{ f \in C^a ([0, \tfrac12])
    & : f(0) = 0 \}
    && \text{if } 0 < a < 1, \\
    \{ f \in C^{1 - 1/\plog} ([0, \tfrac12])
    & : f(0) = 0 \}
    && \text{if } a=0.
  \end{aligned} \right.
\end{equation*} 
Away from the endpoints of $[0,1]$, $\overline \rho$ satisfies
\begin{equation} \label{for:prop:E:on:P:regy:Wloc}
  \left\{ \begin{aligned}
    \overline \rho &\in W^{\kextt, p}_{\operatorname{loc}} (0,1)
    && \text{if } 0 < a < 1, \\
    \overline \rho &\in W^{\kextt, p_0}_{\operatorname{loc}} (0,1)
    && \text{if } a=0
  \end{aligned} \right.
\end{equation}
for any $1 \leq p < (1-a)^{-1}$. 
Finally, if $\ell \geq 3$, then $\overline \rho > 0$ on $(0,1)$.
\end{thm}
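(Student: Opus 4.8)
\emph{Equivalence and uniqueness.} The plan is to read the three problems as the minimisation, its first-order optimality condition, and the Euler--Lagrange equation on the support, and to show they share one solution. Since $(\cdot,\cdot)_V$ is an inner product (Section \ref{ssec:Vip}), the map $\rho \mapsto \tfrac12(\rho,\rho)_V$ is strictly convex, and together with the linear term $\int \Vext\,d\rho$, weak-$*$ lower semicontinuity and compactness of the sublevel sets this yields a unique minimiser $\overline\rho$ of $E$ on $\mathcal P([0,1])$ (Problem \ref{prob:minz}; cf.\ Proposition \ref{prop:P61:R}). Testing minimality of $\overline\rho$ along the convex path $(1-\varepsilon)\overline\rho + \varepsilon\mu$ and differentiating at $\varepsilon=0$ gives $\int h_{\overline\rho}\,d\mu \geq \int h_{\overline\rho}\,d\overline\rho$, which is exactly \eqref{for:prop:E:on:P:EL:weak}; convexity makes this reversible, so Problems \ref{prob:minz} and \ref{prob:VI} coincide. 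The variational inequality then yields the complementarity relation $h_{\overline\rho}\geq \overline C$ on $[0,1]$ with equality $\overline\rho$-a.e., where $\overline C:=\int h_{\overline\rho}\,d\overline\rho$; once the support is shown to be all of $[0,1]$ this upgrades to $h_{\overline\rho}=\overline C$ a.e.\ on $(0,1)$, i.e.\ Problem \ref{prob:wSIE}. Uniqueness in $H^{-(1-a)/2}(0,1)$ follows because $V*$ is injective there (again since $(\cdot,\cdot)_V$ is an inner product) and the mass constraint fixes the additive freedom in $\overline C$. The displayed value of $\overline C$ is simply $\int_0^1 h_{\overline\rho}\,d\overline\rho$, and it is strictly positive because the interaction term $(\overline\rho,\overline\rho)_V$ is strictly positive for $\overline\rho\neq 0$.

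\emph{Support equals $[0,1]$.} Next I would show $\supp\overline\rho=[0,1]$. Convexity of $V$ and $\Vext$ forces the support to be a single interval $[t_1,t_2]\subseteq[0,1]$. I would then argue by contradiction: if $t_1>0$, differentiating the force balance $h_{\overline\rho}=\overline C$ and examining its one-sided boundary behaviour at $t_1$ produces an inequality that contradicts the artificial bound $\sup_{(0,1)}|\Vext'|\leq |V'(1)|$ of \eqref{VextV:bd}; concretely, this bound makes the confining force too weak to pull the support away from the barrier at $0$ (and symmetrically at $1$). Hence $t_1=0$ and $t_2=1$.

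\emph{Inversion of the singular integral equation and regularity (the main obstacle).} The core is to solve $V*\overline\rho=\overline C-\Vext$ on $(0,1)$ explicitly and read off the regularity. Splitting $V=V_a+\Vreg$, the dominant operator $\rho\mapsto V_a*\rho$ is the Riesz operator whose inversion on a finite interval is classical (the finite Hilbert transform when $a=0$, its fractional-order analogue for $0<a<1$). Its inversion formula produces the characteristic endpoint weight $(t(1-t))^{-(1-a)/2}$, giving directly the leading singular terms $C_1 t^{-(1-a)/2}$ and $C_2(1-t)^{-(1-a)/2}$ in \eqref{for:prop:E:on:P:regy:Hol}; the remainders $R_i$ acquire Hölder regularity ($C^a$, resp.\ $C^{1-1/\plog}$) from the mapping properties of the inverse applied to the data $\overline C-\Vext-\Vreg*\overline\rho$. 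For the interior estimate \eqref{for:prop:E:on:P:regy:Wloc} I would bootstrap: the hypotheses \eqref{for:V:props:lambda:psi} and \eqref{for:Vext:assns} let me differentiate the equation $\ell$ times on compact subsets of $(0,1)$, and boundedness of the inverse Riesz operator on the local $L^p$ scale propagates the regularity, with the threshold $p<(1-a)^{-1}$ dictated by the Hardy--Littlewood--Sobolev mapping of $V_a*$ (inverting a fractional integration of order $1-a$ requires $1/p>1-a$). I expect this inversion --- isolating the explicit endpoint singularity, controlling the regular remainder, and obtaining the sharp Hölder and $W^{\ell,p}_{\operatorname{loc}}$ bounds simultaneously --- to be the hardest part, and the place where the splitting $V=V_a+\Vreg$, the lower bound \eqref{for:V:props:lambda} on $V''$, and the orders $\ell,\plog$ all enter.

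\emph{Positivity for $\ell\geq 3$.} Finally, I would deduce $\overline\rho>0$ on $(0,1)$ from the explicit representation of the previous step. For $\ell\geq 3$ the interior regularity \eqref{for:prop:E:on:P:regy:Wloc} is strong enough (e.g.\ $\overline\rho$ continuously differentiable away from the endpoints) that the inverse of $V_a*$ applied to the convex, regular data $\overline C-\Vext$ yields a strictly positive density: the leading weight contribution is positive and the extra regularity keeps the correction terms subordinate, so no interior cancellation to zero can occur. Equivalently, one rules out an interior zero of $\overline\rho$ by combining the sign conditions $V''\geq 0$ and $\Vext''\geq 0$ with the differentiated integral equation, taking care of the boundary contributions generated by the endpoint singularities. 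This last step is delicate precisely because it needs the pointwise second-order control that only $\ell\geq 3$ supplies.
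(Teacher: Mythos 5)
Your proposal follows essentially the same route as the paper's proof: the three problems are identified through the Hilbert-space structure of $(\cdot,\cdot)_V$ and standard variational-inequality machinery (the paper runs this through Theorem \ref{thm:CoV} in Proposition \ref{p:E}); full support is forced by convexity of $h_{\overline\rho}$ off the support together with the artificial bound \eqref{VextV:bd} (the paper's order is slightly different — it first proves $h_{\overline\rho}=C$ a.e.\ on all of $(0,1)$ via the superlevel set of $h_{\overline\rho}$ and then reads off $\supp\overline\rho=[0,1]$, rather than proving interval support first — but the mechanism is the same); the equation is then inverted by the explicit Carleman/finite-Hilbert-transform solution $\mathcal C_a$, whose formula produces the weight $[t(1-t)]^{-(1-a)/2}$ and whose mapping properties (in particular $D^{1-a}$ sending $W^{\ell+1,1}$ data into $W^{\ell,p}$ for $p<(1-a)^{-1}$, Proposition \ref{prop:reg:Da}) yield \eqref{for:prop:E:on:P:regy:Hol} and \eqref{for:prop:E:on:P:regy:Wloc}, exactly as in Steps 3--6 of Lemma \ref{lem:t}.

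Two points as written would not survive scrutiny. First, your uniqueness argument for Problem \ref{prob:wSIE} has a gap: for two solutions $(\rho_1,C_1)$, $(\rho_2,C_2)$ one only obtains $V*(\rho_1-\rho_2)=C_1-C_2$, a \emph{constant}, and injectivity of $V*$ (Lemma \ref{lem:Vastxi:zero:means:xi:zero}) alone does not force $C_1=C_2$; the phrase ``the mass constraint fixes the additive freedom'' presupposes that constants are realised as $V*\zeta$ for some $\zeta$ of nonzero mass, which needs proof. The paper closes this with an auxiliary minimisation (Lemma \ref{lem:Un:P13}: the minimiser $\tilde\rho$ of $\tilde E(\rho)=\tfrac12\|\rho\|_V^2$ satisfies $V*\tilde\rho=\tilde C>0$ constant, so $\alpha\tilde\rho=\rho_1-\rho_2$ with $\alpha=(C_1-C_2)/\tilde C$, and integrating the mass gives $\alpha=0$); alternatively one can pair the constant against $\rho_1-\rho_2$, whose integral vanishes, using the density \eqref{for:HV:densy:Ccinf}, to get $\|\rho_1-\rho_2\|_V^2=0$ directly — but some such step must be supplied. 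Second, your first mechanism for positivity — that $\mathcal C_a$ applied to ``convex, regular data'' yields a strictly positive density — would fail: the data $f_{\overline\rho}=\overline C-\Vext-\Vreg*\overline\rho$ carries the term $-\Vreg*\overline\rho$ with no usable sign, and $\mathcal C_a$ is not positivity-preserving. Your fallback formulation (rule out an interior zero by combining $V''\geq 0$, $\Vext''\geq 0$, the lower bound \eqref{for:V:props:lambda}, and the pointwise second-order control that $\ell\geq 3$ provides via \eqref{for:prop:E:on:P:regy:Wloc}) is precisely the paper's Step 7 and is the argument that should be retained.
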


\emph{Outline of the proof of Theorem \ref{t}}. First, we follow the standard approach in the calculus of variations to show that Problems \ref{prob:minz} and \ref{prob:VI} have a unique solution $\overline \rho$. The main observation to do this is that $E$ is coercive with respect to the norm induced by \eqref{for:defn:Vip}. We rely on the convexity properties of $V$ to show that \eqref{for:defn:Vip} indeed defines an inner product.

Second, we show that $\overline \rho$ satisfies Problem \ref{prob:wSIE}. While it is obvious from \eqref{for:prop:E:on:P:EL:weak} that $h_{\overline \rho}$ is constant a.e.~on $\supp \overline \rho$, it is not clear why $\supp \overline \rho = [0,1]$. The novelty of our proof is the observation that $h_{\overline \rho}$ is as regular as $\Vreg$ and $\Vext$ on $(\supp \overline \rho)^c$. Then, using the convexity properties of $V$ and $\Vext$, we obtain from \eqref{for:prop:E:on:P:EL:weak} by contradiction that $\supp \overline \rho = [0,1]$. 

Third, we use the splitting $V = V_a + \Vreg$ to write \eqref{for:prop:E:on:P:EL:strong} as 
\begin{equation*} 
  V_a * \rho = C - \Vreg * \rho - \Vext =: f_\rho \quad \text{a.e.~on } (0,1).
\end{equation*}
Plugging in $\rho = \overline \rho$ and $C = \overline C$ in the right-hand side, the resulting equation for $\rho$ has been solved explicitly in \cite{Carleman22}, whose solution we denote as
$
  \rho = \mathcal C_a f_{\overline \rho},
$
where $\mathcal C_a$ is a linear operator. Since the expression of $\mathcal C_a$ is rather technical (it relies on a fractional derivative and the Hilbert transform), we postpone its definition to \eqref{def:Ca}. Since we can characterise $f_{\overline \rho}$ only in terms of properties of $\overline \rho$, we require that the formula $\rho = \mathcal C_a f_{\overline \rho}$ is valid for a large enough class of functions $f_{\overline \rho}$. Since such a statement appears to be missing in the literature, we establish it in Theorem \ref{thm:expl:soln:a} for $0 < a < 1$ and Theorem \ref{thm:expl:soln:log} for $a=0$. It is here that we employ the asserted regularity on the potentials $\Vreg$ and $\Vext$. We argue that Carleman's solution has to coincide with $\overline \rho$, which results in the implicit formula
\begin{equation} \label{soln:form:Abs}
   \overline \rho = \mathcal C_a f_{\overline \rho}.
\end{equation}

Fourth, from \eqref{soln:form:Abs} we derive all the regularity properties of $\overline \rho$ as listed in Theorem \ref{t} by combining together several established properties of fractional derivatives and the Hilbert transform. Our proof of the positivity of $\overline \rho$ requires a point-wise evaluation of $\overline \rho''$, which is guaranteed by \eqref{for:prop:E:on:P:regy:Wloc} only if $\ell \geq 3$.

\subsection{Case 2: no barriers}
\label{ss:extn:R}

We return our attention to the general form of the energy $E$ introduced in \eqref{for:defn:ER}. In Case 2, we assume that $\Vext$ is $[0, \infty)$-valued, i.e., $s_1 = -\infty$ and $s_2 = +\infty$. We present the main result of this section, Theorem \ref{t:R}, in a similar manner as in Section \ref{ss:Case1}. To avoid repetition and for the sake of conciseness, we will state in Theorem \ref{t:R} only the regularity result of the minimiser ${\overline \rho}$, and refer to Section \ref{s:FBVP} for the full statement (Theorem \ref{t:R:extended}) in which the counterparts of Problems \ref{prob:minz}--\ref{prob:wSIE} are shown to have ${\overline \rho}$ as their unique solution. \medskip

\emph{Assumptions on the potentials $V$ and $\Vext$ in Case 2}. We assume that $V = V_a + \Vreg : \R \to [0, \infty]$ with $V_a$ as in \eqref{Va} for some fixed $a \in [0,1)$. On $\Vreg$, we assume that
\begin{subequations} \label{for:V:on:R:props}
\begin{equation} \label{for:V:on:R:props:psi}
  \Vreg \in    
  \left\{ \begin{aligned}
    &W^{\ell + 1,1}_{\text{loc}} (\R)
    &&\text{if } 0<a<1, \\
    &W^{\ell + 1,1}_{\text{loc}} (\R) \cap W^{2,\plog}_{\text{loc}} (\R)
    &&\text{if } a = 0
  \end{aligned} \right.
\end{equation}
for some integer $\ell \geq 1$ and some $1 < \plog < 2$. We further assume that
\begin{gather} \label{for:V:on:R:props:genl}
  V \in L^1(\R) \text{ is even}, \quad
  V''(r) \geq 0 \ \text{for a.e.~} r \in \R, \\\label{for:V:on:R:props:lambda}
  \exists \, c, \varepsilon > 0 \ \forall \, r \in (0, \varepsilon) : \essinf_{0 < s < r} V''(s) \geq \frac c{ r^{2 + a} }.
\end{gather}  
\end{subequations}
Note that \eqref{for:V:on:R:props} implies $V \geq 0$. We assume on the external potential $\Vext$ that
\begin{subequations} \label{for:Vext:on:R:assns}
\begin{gather} 
  \Vext \in \left\{ \begin{aligned}
    &W^{\ell + 1,1}_{\text{loc}} (\R)
    &&\text{if } 0<a<1, \\
    &W^{\ell + 1,\plog}_{\text{loc}} (\R)
    &&\text{if } a = 0,
  \end{aligned} \right. \\  
  \Vext(t) \geq \Vext (0) = 0, \quad
  \Vext''(t) \geq 0 \ \text{for a.e.~} t \in \R,
  \quad \exists \, c, C > 0 \ \forall \, t \in \R : \Vext ( t) \geq c (| t| - C), 
\end{gather}
\end{subequations}
where $\ell$ and $\plog$ are the same as in \eqref{for:V:on:R:props:psi}. 

Our motivation for these assumptions is as follows. The condition $V \in L^1 (\R)$ in \eqref{for:V:on:R:props:genl} together with the convexity imply that $V (r) \geq 0 \geq V'(r)$ for all $r > 0$, which is similar to the assumptions on $V$ in Case 1. Given that we require $\Vext$ to be a convex, confining potential, the further assumptions in \eqref{for:Vext:on:R:assns} are minimal. Since the interaction part of $E$ is translation invariant, we choose $\Vext$ to be minimal at $t = 0$ without loss of generality. 

\begin{thm}[Properties of the minimiser ${\overline \rho}$ in Case 2: short version] \label{t:R}
Let $a \in [0, 1)$. Let $E$ be as in \eqref{for:defn:ER} with the related $V$, $\Vreg$, $\Vext$, integer $\ell \geq 1$ and $1 < \plog < 2$ defined in \eqref{for:V:on:R:props} and \eqref{for:Vext:on:R:assns}. 
Then, $E$ has a unique minimiser ${\overline \rho} \in \mathcal P (\R)$. The support of ${\overline \rho}$ is a finite interval $[t_1, t_2]$, and ${\overline \rho}$ has a density which satisfies
\begin{equation} \label{for:thm:R:regy}
  {\overline \rho} \in \left\{ \begin{aligned}
    C_0^a ([t_1, t_2]) &\cap W^{\kextt, p}_{\operatorname{loc}} (t_1, t_2)
    && \text{if } 0 < a < 1, \\
    C_0^{\alpha_0} ([t_1, t_2]) &\cap W^{\kextt, p_0}_{\operatorname{loc}} (t_1, t_2)
    && \text{if } a=0
  \end{aligned} \right.
\end{equation}
for any $1 \leq p < (1-a)^{-1}$, where $\alpha_0 := 1 - 1/\plog$. Moreover, if $\Vext \in C^2([t_1, t_2])$ (in the case $a=0$, we require $\Vext \in C^{2, \alpha_0}([t_1, t_2])$), then 
\begin{equation} \label{for:thm:R:expa}
  {\overline \rho}(t) = C_i |t - t_i|^{\tfrac{1+a}2} + R_i(|t - t_i|) \quad \text{for all $t \in [t_1, t_2]$ and $i = 1,2$}
\end{equation}
for some constants $C_i \geq 0$ and some functions $R_i$ satisfying 
\begin{equation*}
  R_i \in \left\{ \begin{aligned}
    \{ f \in C^{1,a} ([0, t_2 - t_1])
    & : \exists \, C > 0 : |f(s)| \leq C s^{1+a} \}
    && \text{if } 0 < a < 1, \\
    \{ f \in C^{1,\alpha_0} ([0, t_2 - t_1])
    & : \exists \, C > 0 : |f(s)| \leq C s^{1+\alpha_0} \}
    && \text{if } a=0. 
  \end{aligned} \right.
\end{equation*}
Finally, if $\ell \geq 3$, then ${\overline \rho} > 0$ on $(t_1, t_2)$.
\end{thm}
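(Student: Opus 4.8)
\emph{Proof proposal.} The plan is to build on the machinery developed for Case~1 (Theorem~\ref{t}) and to treat the endpoints of the support as free boundaries. First I would establish existence and uniqueness of the minimiser by the direct method. The confining growth condition $\Vext(t) \geq c(|t| - C)$ makes every minimising sequence tight, so that no mass escapes to infinity; combined with the lower semicontinuity of both terms of $E$ (the interaction term by the same monotone-approximation argument used to make $E$ well-defined, and $\Vext$ by its lower semicontinuity), this yields a minimiser ${\overline \rho} \in \mathcal P(\R)$. Uniqueness follows, exactly as in Case~1, from the strict convexity of $E$ on its domain, which rests on $(f,g)_V$ being an inner product. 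I would then show $\supp {\overline \rho}$ is a bounded interval: boundedness is again a consequence of the confining growth of $\Vext$ (an unbounded support would force $E({\overline \rho}) = \infty$, using $V \in L^1(\R)$), while for connectedness I would reuse the convexity argument from the second step of the proof of Theorem~\ref{t}. The (counterpart of the) variational inequality shows that $h_{\overline \rho}$ equals a constant $\overline C$ a.e.\ on $\supp {\overline \rho}$ and is $\geq \overline C$ off the support, and the novel regularity of $h_{\overline \rho}$ on $(\supp {\overline \rho})^c$ together with the convexity of $V$ and $\Vext$ rules out gaps. Writing $\supp {\overline \rho} = [t_1, t_2]$ fixes the two free endpoints.

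Second, having localised to $[t_1, t_2]$, I would rescale this interval affinely to $[0,1]$. The rescaled problem is of exactly the Case~1 type, except that condition \eqref{VextV:bd}, which forced the density to reach the barriers, is absent. On $(t_1, t_2)$ the minimiser therefore still solves $h_{\overline \rho} = \overline C$, which after the splitting $V = V_a + \Vreg$ reads $V_a * {\overline \rho} = f_{\overline \rho}$; inverting it by Carleman's operator gives the implicit representation ${\overline \rho} = \mathcal C_a f_{\overline \rho}$ as in \eqref{soln:form:Abs}. All of the interior regularity in \eqref{for:thm:R:regy}, namely the $W^{\kextt, p}_{\operatorname{loc}}$ and $W^{\kextt, \plog}_{\operatorname{loc}}$ statements together with the Hölder continuity up to the boundary, then follows from the mapping properties of $\mathcal C_a$ exactly as in the fourth step of the proof of Theorem~\ref{t}.

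The genuinely new part, and the step I expect to be the main obstacle, is the behaviour of ${\overline \rho}$ at the free endpoints $t_i$. Carleman's inversion on $[t_1, t_2]$ generically produces a leading term of order $|t - t_i|^{-(1-a)/2}$ whose coefficient is an explicit integral functional of $f_{\overline \rho}$. Unlike in Case~1, there is no barrier to support such a singularity, and I would argue that this coefficient must vanish: the variational inequality forces $h_{\overline \rho} \geq \overline C$ on $(\supp {\overline \rho})^c$ with equality on the support, but a blow-up of ${\overline \rho}$ of the above type, fed through the singular convolution $V_a * {\overline \rho}$, would drive $h_{\overline \rho}$ strictly below $\overline C$ immediately outside $[t_1, t_2]$, a contradiction. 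The vanishing of this leading coefficient is precisely the free-boundary (smooth-fit) condition that determines $t_1$ and $t_2$. Once it holds, the subleading term of Carleman's formula delivers the expansion ${\overline \rho}(t) = C_i |t - t_i|^{(1+a)/2} + R_i(|t - t_i|)$, and controlling the remainder $R_i$ in $C^{1,a}$ (resp.\ $C^{1,\alpha_0}$) is where the stronger hypothesis $\Vext \in C^2$ (resp.\ $C^{2, \alpha_0}$) enters, since it upgrades the regularity of $f_{\overline \rho}$ up to the endpoints.

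Finally, the positivity of ${\overline \rho}$ on $(t_1, t_2)$ under the hypothesis $\ell \geq 3$ I would obtain verbatim from Case~1: differentiating the representation ${\overline \rho} = \mathcal C_a f_{\overline \rho}$ twice, which is licit only once $\ell \geq 3$ by \eqref{for:thm:R:regy}, and using the convexity of $V$ and $\Vext$ to control the sign of ${\overline \rho}''$ rules out interior zeros.
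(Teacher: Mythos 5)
Your architecture matches the paper's almost step for step: existence and uniqueness by the direct method plus strict convexity (Proposition \ref{prop:P61:R}); an affine rescaling of $[t_1,t_2]$ to $[0,1]$ so that the Case-1 machinery applies, where the paper packages the absent condition \eqref{VextV:bd} as Lemma \ref{l:t:extn} --- since $t_1,t_2$ are \emph{defined} as the extremes of $\supp {\overline \rho}$, the rescaled minimiser has $0,1$ in its support, which is exactly the substitute hypothesis, and connectedness of the support then falls out of Theorem \ref{t} rather than being proven beforehand as you propose (both routes use the same convexity argument); interior regularity via ${\overline \rho} = \mathcal C_a f_{\overline \rho}$; vanishing of the $|t-t_i|^{-(1-a)/2}$ coefficient by the smooth-fit mechanism (the paper derives $\tilde h_{{\overline \rho}}'(t_1-) \leq 0 \leq \tilde h_{{\overline \rho}}'(t_2+)$ from convexity of $\tilde h_{{\overline \rho}}$ off the support together with $\tilde h_{{\overline \rho}} \geq \overline C$ and equality at $t_i$; your blow-up-drives-$h$-below-$\overline C$ argument is the contrapositive of the same computation); a second-order bootstrap using $\Vext \in C^2$ (resp.\ $C^{2,\alpha_0}$) together with Proposition \ref{prop:props:S:Hol}.\ref{prop:props:S:Hol:bdy:1}; and the Case-1 positivity proof for $\ell \geq 3$ (note it controls the sign of $h_{{\overline \rho}}''$ at a putative interior zero of ${\overline \rho}$, not of ${\overline \rho}''$, but the ingredients you name are the right ones).

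Two steps need repair. First, your justification of bounded support is false as stated: a probability measure with unbounded support can have finite energy, since $\int \Vext \, d\rho < \infty$ only requires a finite first moment given the linear growth, so ``unbounded support forces $E = \infty$'' does not hold. Boundedness is instead obtained by a truncation/comparison estimate (the paper cites \cite[\S 2.2]{MoraRondiScardia16ArXiv} in Proposition \ref{prop:P61:R}) or a posteriori from the Euler--Lagrange condition via $\supp {\overline \rho} \subset \overline{ \{ \tilde h_{{\overline \rho}} = \overline C \} } \subset \overline{ \{ \Vext \leq \overline C \} }$, which is bounded by the growth of $\Vext$. Second, your smooth-fit step silently uses that $\tilde h_{{\overline \rho}} \geq \overline C$ holds on all of $\R$, but the Case-1 variational inequality only tests against measures supported in $[t_1,t_2]$ (or its rescaled image), so by itself it gives no information off the support. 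The paper closes this in Step 3 of Lemma \ref{lem:t:R} by a second dilation to a strictly larger interval $(r_1, r_2) \supset [t_1,t_2]$ chosen so that $\Vext(r_1) \wedge \Vext(r_2) \geq \overline C$, applying Proposition \ref{p:E} there, and invoking the growth of $\Vext$ outside $(r_1,r_2)$; without some such device the inequality off $[t_1,t_2]$, on which your entire endpoint expansion rests, is unproven. Both repairs are standard, and with them your proposal reproduces the paper's proof.
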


\emph{Comments on Theorem \ref{t:R}}. The main difference between the statements of Theorems \ref{t} and \ref{t:R} is the regularity of ${\overline \rho}$ at $t_1$ and $t_2$ (as expected from Figure \ref{fig:rho:3}). In the setting of Theorem \ref{t:R}, it turns out that the constants in \eqref{for:prop:E:on:P:regy:Hol} equal $0$, which corresponds to ${\overline \rho}$ being H\"older continuous at $t_1$ and $t_2$. Motivated by application (A1) of Section \ref{ss:applAi}, we continue the expansion in \eqref{for:thm:R:expa} to the next order term at the small cost of a stronger regularity requirement on $\Vext$. 

The outline of the proof of Theorem \ref{t:R} is as follows. First, we rely on the linear growth of $\Vext$ to show that $E$ has a unique minimiser ${\overline \rho} \in \mathcal P(\R)$, and that ${\overline \rho}$ has bounded support. Then, on this bounded support, we show that Theorem \ref{t} applies to obtain regularity and positivity of ${\overline \rho}$ on $(t_1, t_2)$. To continue the expansion of ${\overline \rho}(t)$ around $t = t_i$, we rely on the equivalent of Problem \ref{prob:VI} which, in addition to the statements of Theorem \ref{t}, gives a lower bound on
\begin{equation} \label{hrho:t}
  h_{\overline \rho} (t) = \int_\R V ( t -  s) \, d \overline \rho ( s) + \Vext ( t) \in [0, \infty]
  \quad \text{for all } t \in \R
\end{equation}
outside of $[t_1, t_2]$.

\subsection{Discussion}
\label{ss:disc}

In the discussion below, we consider both Cases 1 and 2 at the same time unless mentioned otherwise.
\smallskip

\textit{Singular integral equation}. 
In the setting of Theorem \ref{t} (with $\ell = 2$ if $0 < a < 1$), $\overline \rho$ satisfies the singular integral equation \eqref{SIE}. Formally, \eqref{SIE} is obtained from Problem \ref{prob:wSIE} by differentiating $h_{\overline \rho} (t) = C$ and by exchanging the order of integration and differentiation, i.e., $(V_a * \overline \rho)' = V_a' * \overline \rho$. To justify these steps rigorously, we apply \cite[\S 4.2]{Mangler51}, which requires $\overline \rho$ to be H\"older continuous on compact subsets of $(0,1)$ with exponent greater than $a$; hence the condition $\ell = 2$. 
\smallskip

\textit{Expansion of ${\overline \rho}$ around $t_1$ and $t_2$}. We expect that our proof allows for a continuation of the expansions \eqref{for:prop:E:on:P:regy:Hol} and \eqref{for:thm:R:expa} for a large enough integer $\ell$ in \eqref{for:V:props:lambda:psi} and \eqref{for:Vext:assns}. For the sake of simplicity, we have stopped at the first order.
\smallskip

\textit{Positivity of ${\overline \rho}$}. While Theorems \ref{t} and \ref{t:R} state that $\ell \geq 3$ is sufficient for ${\overline \rho} > 0$ on $(0,1)$, we expect that $\ell \geq 1$ may be sufficient too. Indeed, for special choices of $V$ and $\Vext$ for which we can compute $ {\overline \rho}$ explicitly (see Section \ref{s:appl} for examples), it always turns out that $ {\overline \rho} > 0$ on $(0,1)$.
\smallskip

\textit{The general case of $[0,\infty]$-valued $\Vext$}. 
Consider the extension of the setting of Theorem \ref{t:R} in which $\Vext$ is allowed to jump to $+\infty$ at $- \infty \leq s_1 < s_2 \leq +\infty$. This setting includes, for instance, Theorem \ref{t} without the artificial assumption in \eqref{VextV:bd}. Then, most arguments in the proof of Theorem \ref{t:R} can be repeated with obvious modification. This immediately provides the existence and uniqueness of the minimiser ${\overline \rho}$, which moreover has bounded support (see Figure \ref{fig:rho:3} for typical graphs of ${\overline \rho}$). It also provides the bound on the possible blow up of ${\overline \rho}$ at $t_i$ as given by \eqref{for:prop:E:on:P:regy:Hol}. Since the expansion of ${\overline \rho}$ at $t_i$ in Theorem \ref{t:R} relies on a local argument, it applies whenever $t_i \neq s_i$.

In practice, it can be difficult to compute $t_i$ by minimising $E$, and thus the property `$t_i \neq s_i$' required for \eqref{for:thm:R:expa} can be hard to check. If it is not a priori clear whether $t_i = s_i$, then the conditions on $\Vext$ allow one to replace the jump at $s_i$ by an affine extension of class $C^1$. Then, Theorem \ref{t:R} applies to this altered setting, and its extended version (given by Theorem \ref{t:R:extended}) provides several other characterisations for the possibly altered minimiser $\rho^*$. These characterisations can be used to check whether $\supp \rho^*$ is contained in $[s_1, s_2]$, from which it follows which of the situations in Figure \ref{fig:rho:3} fits to ${\overline \rho}$. 
\smallskip

\textit{Examples and applications}.
The equivalence between Problems \ref{prob:minz}--\ref{prob:wSIE} (and the equivalence between Problems \ref{prob:minz:R}--\ref{prob:sSIE:R}) and the properties of ${\overline \rho}$ are valuable both for developing efficient and accurate numerical solution methods and for proving many-particle limits of related interacting particle systems. In Section \ref{s:appl} we demonstrate the applicability of Theorem \ref{t} and Theorem \ref{t:R} by lifting the limitations in (A1)--(A3) and extending the class of potentials $\Vext$ in the result of \cite{DeiftKriecherbauerMcLaughlin98}. In a future publication, we use Theorems \ref{t} and \ref{t:R} to pursue (A2) and to tackle the open problem on the discrete part of the boundary layer result in \cite{GarroniVanMeursPeletierScardia16}.
\smallskip

\textit{Extension of Theorems \ref{t} and \ref{t:R} to higher dimensional domains}. 
Most applications of nonlocal energies as in \eqref{for:defn:ER} are set on $\R^d$ with $d \geq 2$. The existence and uniqueness of minimisers of $E$ can be extended to higher dimensions (see, e.g., \cite[Thm.~3.1]{MoraRondiScardia16ArXiv}). However, for proving the remaining statements of Theorems \ref{t} and \ref{t:R}, we heavily rely on the one-dimensional setting. While we prove that $\supp {\overline \rho}$ is an interval in the one-dimensional setting, in the higher-dimensional setting it is not even clear whether $\supp {\overline \rho}$ has full dimension. Indeed, for Coulomb interactions on a bounded domain $\Omega \subset \R^2$, it is well-known that $\supp {\overline \rho}$ concentrates on $\partial \Omega$. Moreover, when the domain is $\R^d$, \cite{BalagueCarrilloLaurentRaoul14,MoraRondiScardia16ArXiv} provide examples where the dimension of $\supp {\overline \rho}$ is smaller than $d$ while $\supp {\overline \rho} \subset \subset \Omega$.

Even if $\supp {\overline \rho} \subset \R^d$ can be shown to have full dimension with sufficiently smooth boundary, the next challenge would be to prove regularity properties of ${\overline \rho}$. In this paper we rely on the explicit solution formula for Carleman's equations of the form \eqref{soln:form:Abs}. In higher dimensions such a formula is not available. One exception is thanks to the particular setting in \cite{Kahane81}, where the singular integral equation $V_a * \rho = g$ is solved on the unit ball $B \subset \R^2$ for $g \in C^2(\overline B)$. \smallskip

The remainder of the paper is organised as follows. In  Section \ref{s:prel} we set the notation and recall several results from textbooks. In  Section \ref{ssec:Vip} we define the Hilbert space $H_V (0,1)$ equipped with $(\cdot, \cdot)_V$ and characterise its properties. In  Section \ref{s:Carl} we derive a precise regularity result for the unique solution of $V_a * \rho = f$ on $(0,1)$, which was first constructed as $\rho = \mathcal C_a f$ in \cite{Carleman22}. In Section \ref{s:pfs} we prove Theorem \ref{t}. In Section \ref{s:FBVP} we state and prove Theorem \ref{t:R:extended}; the extended version of Theorem \ref{t:R}. In Section \ref{s:appl} we treat several examples in the literature (including (A1)--(A3)) for which Theorem \ref{t} and Theorem \ref{t:R} lift restrictions or guarantee stronger regularity properties. Appendices \ref{app:pf:lem:props:V} and \ref{app:pf:thm:expl:soln:log} contain computationally heavy proofs.

\section{Preliminaries}
\label{s:prel}

In this section we list the symbols used throughout the paper, and cite several textbook results on which we rely in the subsequent sections.

\newcommand{\specialcell}[2][c]{\begin{tabular}[#1]{@{}l@{}}#2\end{tabular}}
\begin{longtable}{lll}
$b \wedge c$, $b \vee c$ & $\min \{b, c\}$, $\max \{b, c\}$ & \\
$\|f\|_p$ & $L^p$-norm of $f$ on the domain of $f$ & \\
$\widehat f$, $\mathcal{F}(f)$ & \specialcell[t]{Fourier transform of $f$; \\ $\mathcal{F}(f)(\omega) = \widehat f (\omega) := \int_\R f(t) e^{-2\pi i \omega t} \, dt$} & \\
$f (t +)$ & one-sided limit of $f$ at $t$ from above; $f( t+) := \lim_{ s \downarrow  t} f( s)$ \\
$f (t -)$ & one-sided limit of $f$ at $t$ from below; $f( t-) := \lim_{ s \uparrow  t} f( s)$ \\
$\indicatornoacc A$ & $\indicatornoacc A (x)$ equals $1$ if $x \in A$ and $0$ if $x \notin A$ & \\
$B_r(x)$ & ball of radius $r$ centred at $x$ & \\
$\Gamma$ & $\Gamma (\alpha) := \int_0^\infty t^{\alpha-1} e^{-t} \, dt$ for any $\alpha > 0$ & \\
$C^\alpha([0,1])$ & H\"older space; $0 < \alpha < 1$ & \\
$C^{k,\alpha}([0,1])$ & $\{ f \in C^\alpha([0,1]) : f^{(k)} \in C^\alpha([0,1]) \}$; $k \in \N$ & \\
$C_0^\alpha([0,1])$ & $\{ f \in C^\alpha([0,1]) : f(0) = f(1) = 0 \}$ & \\
$C_0^\alpha(\phi_a)$ & weighted H\"older space & \eqref{for:defn:wHolS} \\
$C_b(\R)$ & space of bounded, continuous functions on $\R$ & \\
$H^s (\R)$ & fractional Sobolev space; $s \in \R$ & \eqref{Hs} \\
$H_V (t_1, t_2)$ & Hilbert space of functions $f : (t_1, t_2) \to \R$ & \eqref{for:defn:HVR}, \eqref{for:defn:HV} \\
$\mathcal L$ & Lebesgue measure on $\R$ & \\
$\mathcal M ([0,1])$ & space of finite, signed Borel measures on $[0,1]$ & \\
$\N_+, \N$ & $\N_+ := \{1,2,3,\ldots\}$; $\N = \{0\} \cup \N_+$ &  \\
$\mathcal P ([0,1])$ & space of probability measures; $\mathcal P ([0,1]) \subset \mathcal M ([0,1])$ & \\
$p_0$ & regularity constant of $\Vreg$ and $U$ when $a = 0$; $1 < p_0 < 2$ & \eqref{for:V:props:lambda:psi}, \eqref{for:V:on:R:props:psi} \\
$S$ & singular integral operator & \eqref{for:defn:S} \\
$\mathcal S (\R)$ & space of Schwartz functions & \\
$T_\# \rho$ & push-forward of $\rho \in \mathcal P (\R)$ by $T : \R \to \R$; & \\
& $T_\# \rho (A) := \rho (T^{-1}(A))$ & \\
$wL^p (0,1)$ & weak-$L^p$ space & \eqref{for:defn:wLp} \\
\end{longtable} 
We reserve the symbols $C, C',$ etc. for generic positive constants which we leave unspecified. 
\smallskip

We continue by listing several well-known results. The first one is a basic theorem in the calculus of variations (see, e.g., \cite[Thm.~2.1]{KinderlehrerStampacchia80}). We use it in the proof of Theorem \ref{t} to show that Problems \ref{prob:minz} and \ref{prob:VI} are equivalent.

\begin{thm} [Characterisation of minimiser] \label{thm:CoV}
Let $X$ be a Hilbert space, $K \subset X$ closed and convex, $f \in X$, and $\mathsf E: X \to \R$ be given by $\mathsf E(u) = \tfrac12 \|u\|_X^2 - (f,u)_X$. Then $\mathsf E$ has a unique minimiser in $K$, which is characterised by the unique solution $u \in K$ of the variational inequality:
\begin{equation*}
  0 \leq (u - f, v-u)_X
  \quad \text{for all } v \in K.
\end{equation*}

\end{thm}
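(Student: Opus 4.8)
The plan is to reduce the statement to the standard form covered by the quoted result. The energy $\mathsf E(u)=\tfrac12\|u\|_X^2-(f,u)_X$ is defined on all of $X$, and the claim is that its minimiser over a closed convex $K\subset X$ exists, is unique, and is characterised by the variational inequality $0\le(u-f,v-u)_X$ for all $v\in K$. First I would observe that $\mathsf E$ differs from $\tfrac12\|u-f\|_X^2$ only by the constant $-\tfrac12\|f\|_X^2$, since
\begin{equation*}
  \tfrac12\|u-f\|_X^2=\tfrac12\|u\|_X^2-(f,u)_X+\tfrac12\|f\|_X^2
  =\mathsf E(u)+\tfrac12\|f\|_X^2.
\end{equation*}
Hence minimising $\mathsf E$ over $K$ is exactly the same as minimising the distance $\|u-f\|_X$ over $K$, i.e.\ computing the metric projection of $f$ onto $K$. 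The existence and uniqueness of the minimiser then follow from the Hilbert-space projection theorem: a nonempty closed convex subset of a Hilbert space contains a unique element of minimal norm relative to any fixed point, proved via the parallelogram law applied to a minimising sequence to show it is Cauchy, together with closedness of $K$ for the limit and convexity for uniqueness.

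For the variational-inequality characterisation I would argue directly from minimality. Suppose $u\in K$ minimises $\mathsf E$. For any $v\in K$ and $\lambda\in(0,1]$, convexity of $K$ gives $u+\lambda(v-u)\in K$, so $\mathsf E(u)\le\mathsf E(u+\lambda(v-u))$. Expanding the quadratic,
\begin{equation*}
  \mathsf E(u+\lambda(v-u))-\mathsf E(u)
  =\lambda(u-f,v-u)_X+\tfrac{\lambda^2}2\|v-u\|_X^2\ge0.
\end{equation*}
Dividing by $\lambda>0$ and letting $\lambda\downarrow0$ yields $0\le(u-f,v-u)_X$, which is the asserted inequality. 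Conversely, if $u\in K$ satisfies the variational inequality, then the same expansion with $\lambda=1$ shows $\mathsf E(v)-\mathsf E(u)=(u-f,v-u)_X+\tfrac12\|v-u\|_X^2\ge0$ for every $v\in K$, so $u$ is the minimiser; this also re-proves uniqueness, since two solutions $u_1,u_2$ satisfy $(u_1-f,u_2-u_1)_X\ge0$ and $(u_2-f,u_1-u_2)_X\ge0$, and adding these gives $-\|u_1-u_2\|_X^2\ge0$, forcing $u_1=u_2$.

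None of these steps presents a genuine obstacle, as this is a classical result; the only point requiring care is the existence half of the projection argument, where one must verify that a minimising sequence is Cauchy. Here the parallelogram law is essential: for a minimising sequence $(u_n)$ with $\|u_n-f\|_X\to d:=\inf_{v\in K}\|v-f\|_X$, the midpoint $\tfrac12(u_n+u_m)$ lies in $K$ by convexity, so its distance to $f$ is at least $d$, and the parallelogram identity then bounds $\|u_n-u_m\|_X^2$ by a quantity tending to $0$. Completeness of $X$ delivers a limit, and closedness of $K$ places it in $K$. Since the present statement is invoked only as a black box to identify the minimisers of Problems \ref{prob:minz}--\ref{prob:VI}, I would keep the write-up brief and cite \cite[Thm.~2.1]{KinderlehrerStampacchia80} for the details rather than reproduce the full projection argument.
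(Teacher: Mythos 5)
Your proposal is correct and takes essentially the same approach as the paper: the paper offers no proof of this theorem at all, invoking it as a classical result with the citation \cite[Thm.~2.1]{KinderlehrerStampacchia80}, and your argument (reduction of $\mathsf E$ to $\tfrac12\|u-f\|_X^2$ up to the constant $-\tfrac12\|f\|_X^2$, the parallelogram-law Cauchy argument for the projection, and the first-order expansion giving the variational inequality together with its converse and uniqueness) is precisely the standard proof found in that reference. All your computations check out, so deferring to the citation, as both you and the paper do, is entirely appropriate.
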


Next we introduce several integral operators in preparation for defining $\mathcal C_a$ in \eqref{def:Ca}. For H\"older-continuous functions $f : [0,1] \to \R$, we define the singular integral operator 
\begin{equation} \label{for:defn:S}
  (S f)(t) := \text{p.v.}\int_0^1 \frac{f(s)}{t-s} \, ds
  \qquad \text{for all } 0 < t < 1.
\end{equation}
For later use, we extend $S$ to functions defined on any given bounded interval $[t_1, t_2]$ by
\begin{equation} \label{for:defn:S12}
  \big( S_{t_1}^{t_2} f \big)(t) := \text{p.v.}\int_{t_1}^{t_2} \frac{f( s)}{ t -  s} \, d  s
  \qquad \text{for all } t \in (t_1, t_2).
\end{equation}

The operator $-\tfrac1\pi S$ is known as the \emph{finite Hilbert transform}, which attains a natural extension to $L^p$-spaces (see Proposition \ref{prop:props:S}). For any $a, t_0 \in (0,1)$, we recall the identities
\begin{align} \label{EK00}
 S \Big( \frac{ \indicatornoacc{(0, t_0)} (t) }{t^{1-a} |t_0 - t|^a} \Big)
 &= \frac{ 1 }{ t^{1-a} |t_0 - t|^a } \left\{ \begin{aligned}
     &\tfrac\pi{ \tan (a \pi) }
     &&\text{if } t < t_0, \\
     &\tfrac\pi{ \sin (a \pi) }
     &&\text{if } t > t_0,
   \end{aligned} \right. 
  && 0 < t < 1,
  \\ \label{S:id}
  S \bigg( \Big[ \frac{1-t}t \Big]^{ \tfrac{1-a}2 } \bigg)
  &= \frac\pi{ \cos ( \tfrac{a\pi}2 ) } \bigg( 1 - \Big[ \frac{1-t}t \Big]^{ \tfrac{1-a}2 } \sin ( \tfrac{a\pi}2 ) \bigg),
  && 0 < t < 1,
\end{align}
which can be found, e.g., in \cite[(2.47)]{EstradaKanwal00} and \cite[(12A.19)]{King09II} respectively. Here and in the following, we abuse notation by writing $S(f(t))$ instead of $(Sf)(t)$ whenever convenient.

\begin{prop} [Finite Hilbert transform on $L^p$] \label{prop:props:S} 
Let $f \in L^p(0,1)$ and $g \in L^q(0,1)$ with $\tfrac1p + \tfrac1q = 1$ and $p > 1$. Then
\begin{enumerate}[label=(\roman*)]
  \item \label{prop:props:S:BLO} \cite[\S 4.20]{King09I}: $S$ is a bounded linear operator from $L^p(0,1)$ to itself;
  \item \label{prop:props:S:log} \cite[(4.182)]{King09I}: $\displaystyle (Sf)(t) = \frac d{dt} \int_0^1 \log|t-s| f(s) \, ds$ in the weak sense;
  \item \label{prop:props:S:Fubini} \cite[\S 11.10.8]{King09I}: $\displaystyle \int_0^1 f Sg = - \int_0^1 g Sf$;
  \item \label{prop:props:S:pol} \cite[(11.215)]{King09I}: $S(t f(t)) = t S(f(t)) - \int_0^1 f$;
  \item \label{prop:props:S:der} \cite[(11.223)]{King09I}: If $f \in W^{k,p}(0,1)$, then 
  $$(Sf)^{(k)} (t) =  \big( S f^{(k)} \big) (t) + \sum_{\ell = 0}^{k-1} (k - \ell - 1)! \bigg( \frac{f^{(\ell)} (1)}{ (1-t)^{k-\ell} } - \frac{f^{(\ell)} (0)}{ (-t)^{k-\ell} } \bigg) \qquad \text{for any } k \in \N_+.$$
\end{enumerate}
\end{prop}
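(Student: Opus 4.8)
The plan is to treat item~\ref{prop:props:S:BLO} as the only statement requiring genuine harmonic analysis, and to obtain items~\ref{prop:props:S:log}--\ref{prop:props:S:der} from the principal-value definition \eqref{for:defn:S} by elementary manipulations of the kernel $\frac1{t-s}$, supplemented by density and duality arguments. For \ref{prop:props:S:BLO} I would extend $f \in L^p(0,1)$ by zero to all of $\R$ and compare $Sf$ with the Hilbert transform $H$ on the line, normalised so that its kernel equals $\frac1{t-s}$. Since the zero extension vanishes outside $(0,1)$, the two principal-value integrals coincide on $(0,1)$, whence $\|Sf\|_{L^p(0,1)} = \|Hf\|_{L^p((0,1))} \leq \|Hf\|_{L^p(\R)}$, and M.~Riesz's theorem gives $\|Hf\|_{L^p(\R)} \leq C_p\|f\|_{L^p(\R)} = C_p\|f\|_{L^p(0,1)}$ for $1 < p < \infty$. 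Boundedness and linearity on the dense subset of H\"older-continuous functions, on which \eqref{for:defn:S} is defined pointwise, then extend $S$ uniquely to all of $L^p(0,1)$.

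Items~\ref{prop:props:S:log}--\ref{prop:props:S:pol} are then essentially formal consequences of the kernel. For \ref{prop:props:S:log} I would set $(Lf)(t) := \int_0^1 \log|t-s| f(s)\,ds$, note that $Lf$ is bounded since $\log$ is integrable on $(0,1)$, and verify $\int_0^1 (Lf)\, \phi' = -\int_0^1 (Sf)\,\phi$ for every $\phi \in C_c^\infty(0,1)$ by Fubini and $\partial_t \log|t-s| = \frac1{t-s}$, the principal value emerging from the symmetric limit around $s=t$. Item~\ref{prop:props:S:Fubini} is the antisymmetry $\frac1{t-s} = -\frac1{s-t}$: I would prove $\int_0^1 f\,Sg = -\int_0^1 g\,Sf$ first for smooth $f,g$ by Fubini (excising an $\varepsilon$-neighbourhood of the diagonal and letting $\varepsilon \downarrow 0$), and then extend to $f \in L^p$, $g \in L^q$ by density using \ref{prop:props:S:BLO} and H\"older's inequality. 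Item~\ref{prop:props:S:pol} is purely algebraic: writing $s f(s) = (s-t)f(s) + t f(s)$ gives
\begin{equation*}
  \frac{s f(s)}{t-s} = t\,\frac{f(s)}{t-s} - f(s),
\end{equation*}
and integrating over $(0,1)$, the second term being non-singular, yields $S(tf(t)) = t\,S(f(t)) - \int_0^1 f$.

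Finally, I would prove item~\ref{prop:props:S:der} by induction on $k$. For the base case $k=1$, using $\partial_t \frac1{t-s} = -\partial_s \frac1{t-s}$ and integrating by parts in $s$ produces
\begin{equation*}
  (Sf)'(t) = (Sf')(t) - \Big[ \frac{f(s)}{t-s} \Big]_{s=0}^{s=1}
  = (Sf')(t) + \frac{f(1)}{1-t} - \frac{f(0)}{-t},
\end{equation*}
which is the asserted formula for $k=1$. The inductive step applies this identity to $f^{(k-1)}$ and differentiates the boundary terms $\frac{f^{(\ell)}(1)}{(1-t)^{k-\ell}}$ and $\frac{f^{(\ell)}(0)}{(-t)^{k-\ell}}$ in $t$, whose derivatives generate the factorial factors $(k-\ell-1)!$.

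I expect the main obstacle to lie not in any single identity but in making the singular manipulations rigorous: justifying the exchange of differentiation (or of Fubini) with the principal-value integral in \ref{prop:props:S:log}, \ref{prop:props:S:Fubini} and \ref{prop:props:S:der}. The cleanest route is probably to carry out each computation on the regularised kernel $\frac{t-s}{(t-s)^2 + \varepsilon^2}$ (or, equivalently, via the log-potential representation of \ref{prop:props:S:log}), establish the identity for $\varepsilon > 0$ where everything is smooth, and pass to the limit $\varepsilon \downarrow 0$, using the $L^p$-bound of \ref{prop:props:S:BLO} to control the error terms.
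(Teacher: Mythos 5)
Your proposal is mathematically correct, but be aware that the paper does not prove this proposition at all: it appears in the preliminaries with every item cited verbatim from King's monograph (\cite[\S 4.20, (4.182), \S 11.10.8, (11.215), (11.223)]{King09I}), so what you have produced is a self-contained reconstruction of textbook facts rather than an alternative to an argument in the paper. Your reconstruction follows the classical route and each step checks out: for \ref{prop:props:S:BLO}, the zero-extension comparison with the Hilbert transform on $\R$ plus M.~Riesz's theorem is the standard proof, and it also supplies the a.e.\ existence of the principal value for $f \in L^p$; item \ref{prop:props:S:pol} is indeed a one-line kernel identity valid a.e.; and your induction for \ref{prop:props:S:der} generates the factorials correctly, since $\frac d{dt}(1-t)^{-(k-\ell)} = (k-\ell)(1-t)^{-(k-\ell+1)}$ and $\frac d{dt}(-t)^{-(k-\ell)} = (k-\ell)(-t)^{-(k-\ell+1)}$ turn $(k-\ell-1)!$ into $(k-\ell)!$, while the fresh boundary term from $(Sf^{(k)})'$ enters with coefficient $0!$. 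Two structural points deserve attention in a full write-up. First, your proof of \ref{prop:props:S:log} implicitly uses \ref{prop:props:S:Fubini}: after Fubini and the inner integration by parts (where the diagonal contribution $\log\varepsilon \, (\phi(s+\varepsilon)-\phi(s-\varepsilon)) \to 0$), you arrive at $\int_0^1 f\, S\phi$, which must be converted to $-\int_0^1 (Sf)\,\phi$ by the antisymmetry; so the logical order should be \ref{prop:props:S:BLO}, \ref{prop:props:S:Fubini}, then \ref{prop:props:S:log}. Second, the cleanest rigorous version of your base case in \ref{prop:props:S:der} is not to differentiate under the principal value (which produces a hypersingular integral) but to integrate by parts inside $Sf$ using the antiderivative $-\log|t-s|$, giving $(Sf)(t) = -f(1)\log|1-t| + f(0)\log|t| + \int_0^1 \log|t-s|\, f'(s)\, ds$, and then differentiate via \ref{prop:props:S:log}; the boundary terms at $s = t \pm \varepsilon$ vanish because $W^{1,p}(0,1) \subset C^{0,1-1/p}([0,1])$ for $p>1$, which is precisely where the hypothesis $p>1$ is used. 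With these orderings made explicit, the regularisation strategy you sketch for the remaining interchanges is standard and closes the argument; what your approach buys over the paper's pure citation is a self-contained proof that exposes exactly which regularity hypotheses each identity needs, at the cost of a page of routine singular-integral bookkeeping that the paper deliberately outsources.
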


In preparation for Proposition \ref{prop:props:S:Hol}, we recall some basic properties of H\"older spaces. For $0 < \alpha < 1$ let $C^\alpha ([0,1])$ be the usual space of H\"older continuous functions, and let 
\begin{align*}
  C_0^\alpha ([0,1]) &:= \big\{ f \in C^\alpha ([0,1]) : f(0) = f(1) = 0 \big\}, \\
  C^{k,\alpha} ([0,1]) &:= \big\{ f \in C^k ([0,1]) : f^{(k)} \in C^\alpha ([0,1]) \big\},
\end{align*}
where $k \in \N_+$. We omit the proof of the following proposition.

\begin{prop} [H\"older continuous functions: expansion at endpoint] \label{prop:Hol:0}
Let $0 < \alpha < 1$ and $f \in C^\alpha ([0,1])$ with $f(0) = 0$. Then
\begin{enumerate}[label=(\roman*)]
  \item \label{prop:Hol:0:prod} $t \mapsto t^\beta f(t) \in C^\alpha ([0,1])$ for any $0 < \beta$;
  \item \label{prop:Hol:0:quot} $t \mapsto f(t)/t^\beta \in C^{\alpha - \beta} ([0,1])$  for any $0 < \beta < \alpha$;
  \item \label{prop:Hol:0:der1} if $f \in C^{1,\alpha} ([0,1])$ and $|f(t)| \leq C t^{1 + \alpha}$, then $t \mapsto f(t)/t \in C^\alpha ([0,1])$;
  \item \label{prop:Hol:0:der} if $f \in C^{1,\alpha} ([0,1])$ and $|f(t)| \leq C t^{1 + \alpha + \beta}$ for some $0 < \beta < 1 - \alpha$, then $t \mapsto f(t)/t^\beta \in C^{1,\alpha} ([0,1])$.
\end{enumerate}
\end{prop}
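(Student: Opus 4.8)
\emph{Proof proposal.} The common engine behind all four parts is that the hypothesis $f(0)=0$ upgrades the bare Hölder bound into the endpoint decay $|f(t)|=|f(t)-f(0)|\le[f]_\alpha\,t^\alpha$ on $[0,1]$, where $[f]_\alpha$ denotes the Hölder seminorm. For each candidate function $g$ I would estimate the increment $g(t)-g(s)$ for $0<s<t\le 1$ by splitting it into a \emph{numerator} term, carrying the increment of $f$ (or of $f'$), and a \emph{weight} term, carrying the increment of the algebraic factor $t^{\pm\beta}$. The weight increment is then controlled by whichever of the two elementary inequalities
\[
  |t^{\beta}-s^{\beta}|\le(t-s)^{\beta},
  \qquad
  |t^{\beta}-s^{\beta}|\le\beta\,s^{\beta-1}(t-s)
\]
(and their analogues for negative exponents) is sharper, the choice being dictated by the regime $s\le t-s$ versus $s\ge t-s$. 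Combined with the endpoint decay, every estimate reduces to checking that a certain exponent is nonnegative.

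For (i) with $\beta\ge\alpha$ the claim is immediate, since $C^\alpha([0,1])$ is a Banach algebra and $t\mapsto t^\beta$ lies in $C^\alpha$; for $0<\beta<\alpha$ I would write $t^\beta f(t)-s^\beta f(s)=t^\beta\bigl(f(t)-f(s)\bigr)+(t^\beta-s^\beta)f(s)$, bound the first summand by $[f]_\alpha(t-s)^\alpha$, and treat the second via $|f(s)|\le[f]_\alpha s^\alpha$ and the two-regime weight estimate: when $s\le t-s$ the crude bound gives $(t-s)^\beta s^\alpha\le(t-s)^{\alpha+\beta}$, when $s\ge t-s$ the mean-value bound gives $s^{\alpha+\beta-1}(t-s)$, and in both cases the exponent bookkeeping yields a bound $\le C(t-s)^\alpha$. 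Part (ii) is the same computation applied to $f(t)/t^\beta$: the value $0$ at the origin comes from $|f(t)/t^\beta|\le[f]_\alpha t^{\alpha-\beta}$, the numerator term is bounded using $t^\beta\ge(t-s)^\beta$ by $[f]_\alpha(t-s)^{\alpha-\beta}$, and the weight term is again closed by the two-regime split, producing exactly the reduced exponent $\alpha-\beta$.

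Part (iii) is where $f\in C^{1,\alpha}$ enters decisively. I would first observe that $|f(t)|\le Ct^{1+\alpha}$ forces $f'(0)=0$ (since $f(t)/t\to f'(0)$ while $|f(t)/t|\le Ct^\alpha\to0$), so $|f'(\tau)|\le[f']_\alpha\tau^\alpha$. Writing $g=f/t$ and $f(t)-f(s)=\int_s^t f'$, I split $g(t)-g(s)=\frac{f(t)-f(s)}{t}-f(s)\frac{t-s}{st}$; the \emph{Lipschitz}-type bound $\bigl|\int_s^t f'\bigr|\le[f']_\alpha t^\alpha(t-s)$ controls the first summand by $[f']_\alpha(t-s)^\alpha$ (using $t-s\le t$), while $|f(s)|\le Cs^{1+\alpha}$ controls the second by $Cs^\alpha\tfrac{t-s}{t}\le C(t-s)^\alpha$, and the endpoint estimate at $s=0$ is immediate. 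The essential point is that the $C^1$ regularity supplies a Lipschitz-type increment bound on the numerator, for which a merely Hölder numerator has no substitute.

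I expect part (iv) to be the main obstacle. The natural plan is to show that $g=f/t^\beta$ is $C^1$ on $[0,1]$ with $g'(0)=0$ and $g'(t)=t^{-1-\beta}w(t)$, where $w:=tf'-\beta f$, and then to prove $g'\in C^\alpha$. The value $g'(0)=0$ follows as in (iii). The difficulty is that, unlike in (iii), the numerator $w$ is only $C^\alpha$ (it contains the non-differentiable $f'$), so the Lipschitz-type estimate that closed (iii) is unavailable; moreover a term-by-term bound of $g'(t)=t^{-\beta}f'(t)-\beta t^{-1-\beta}f(t)$ fails as soon as $\beta>\alpha$, since $t^{-\beta}f'(t)$ need not stay bounded. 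Making the two terms combine into a bounded $C^\alpha$ function appears to require the refined derivative decay $|f'(t)|\le C\,t^{\alpha+\beta}$ (equivalently $|w(t)|\le C\,t^{1+\alpha+\beta}$), which is strictly stronger than the decay $t^\alpha$ provided by $f'\in C^\alpha$ with $f'(0)=0$. This refined estimate is the crux, and it does not seem to follow from the stated hypotheses by a single averaging: comparing the average of $f'$ over $[t,2t]$ with the integral decay $|f(t)|\le Ct^{1+\alpha+\beta}$ loses a power and yields only the exponent $\alpha(1+\alpha+\beta)/(1+\alpha)<\alpha+\beta$, so one would have to exploit the integral bound simultaneously at all dyadic scales. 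My plan would therefore be to attempt such a multiscale argument and, failing that, to close (iv) under the slightly stronger hypothesis $f\in C^{2,\alpha}([0,1])$: then $|f'(t)|\le Ct$ and $|f''|\le C$, whence $w\in C^1$ with $|w'(t)|\le Ct$ and $|w(t)|\le Ct^{1+\alpha+\beta}$, and the quotient $g'=w/t^{1+\beta}$ is handled exactly by the (iii)-type splitting, the exponent checks using $\alpha+\beta<1$.
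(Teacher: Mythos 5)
First, a point of comparison: the paper \emph{omits} its proof of Proposition \ref{prop:Hol:0} (``We omit the proof of the following proposition''), so there is no internal argument to measure you against. Judged on its own, your treatment of \ref{prop:Hol:0:prod}--\ref{prop:Hol:0:der1} is correct and complete in outline: the endpoint decay $|f(t)|\le Ct^\alpha$ (and, in \ref{prop:Hol:0:der1}, the observation that the decay hypothesis forces $f'(0)=0$, upgrading the numerator bound to the Lipschitz-type estimate $|\int_s^t f'|\le C t^\alpha(t-s)$), combined with the two-regime split $s\le t-s$ versus $s\ge t-s$ for the weight increments, is exactly the standard engine, and your exponent bookkeeping closes in every case.

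Your instinct about \ref{prop:Hol:0:der} is not only justified; the difficulty you isolated is a genuine obstruction, and the statement as printed is in fact \emph{false}. Take $\gamma:=\beta/(1+\alpha)$ and $f(t):=t^{1+\alpha+\beta}\sin(t^{-\gamma})$. Then $f(0)=0$, $|f(t)|\le t^{1+\alpha+\beta}$, and $f\in C^{1,\alpha}([0,1])$: the critical part of $f'$ is $-\gamma t^{\alpha+\beta-\gamma}\cos(t^{-\gamma})$, whose amplitude $t^{\alpha+\beta-\gamma}$ matches the $\alpha$-power of the local wavelength $t^{1+\gamma}$ precisely because $\alpha+\beta-\gamma=\alpha(1+\gamma)$; the two-regime check (mean value with $|f''|\lesssim t^{\alpha+\beta-2\gamma-1}$ for close pairs, amplitude bounds for far pairs) gives a finite H\"older seminorm, the marginal exponent being exactly $\beta-\gamma(1+\alpha)=0$. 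Yet $g(t)=f(t)/t^\beta=t^{1+\alpha}\sin(t^{-\gamma})$ has $g'(t)=(1+\alpha)t^\alpha\sin(t^{-\gamma})-\gamma t^{\alpha-\gamma}\cos(t^{-\gamma})$, and at the consecutive extrema $u_k=(2\pi k)^{-1/\gamma}$, $v_k=((2k+1)\pi)^{-1/\gamma}$ one finds
\begin{equation*}
  \frac{|g'(u_k)-g'(v_k)|}{(u_k-v_k)^\alpha}
  \;\sim\; \frac{u_k^{\alpha-\gamma}}{u_k^{(1+\gamma)\alpha}}
  \;=\; u_k^{-\gamma(1+\alpha)}
  \;=\; u_k^{-\beta}
  \;\xrightarrow{k\to\infty}\;\infty,
\end{equation*}
so $g\notin C^{1,\alpha}$. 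Note this example satisfies $|f'(u_k)|\sim u_k^{\alpha(1+\alpha+\beta)/(1+\alpha)}$, i.e.\ it \emph{attains} the single-scale exponent you computed: the refined decay $|f'(t)|\le Ct^{\alpha+\beta}$ genuinely fails, and no multiscale refinement can rescue the statement, so your first proposed plan is doomed for a structural reason rather than a technical one.

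As for your fallback: strengthening to $f\in C^{2,\alpha}$ is mathematically sound (then $f(0)=f'(0)=0$ give $|f''|\le C$, $|f'|\le Ct$, $|f|\le Ct^2$, hence $|g''(t)|\le Ct^{-\beta}\le Ct^{\alpha-1}$ since $\beta<1-\alpha$, and $g'\in C^\alpha$ follows by integrating), but it would \emph{not} support the paper's only use of \ref{prop:Hol:0:der}: in the proof of Proposition \ref{prop:props:S:Hol}.\ref{prop:props:S:Hol:bdy:1} the part is applied to $\psi(t)=\int_0^t R_4\phi_a$, which is merely $C^{1,\alpha}$ because $R_4$ is only $C^\alpha$. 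The repair consistent with that application is to add the hypothesis that $u(t):=t^{-\beta}f'(t)$ extends to $C^\alpha([0,1])$. Then the decay of $f$ forces $u(0)=0$, hence $|f'(t)|\le Ct^{\alpha+\beta}$; writing $g'=u-\beta q$ with $q:=f/t^{1+\beta}$ one gets $|q'(t)|\le Ct^{\alpha-1}$, and the elementary fact that a function in $C([0,1])\cap C^1((0,1])$ with $|q'(t)|\le Ct^{\alpha-1}$ lies in $C^\alpha([0,1])$ (split $t\le 2s$ versus $t\ge 2s$) closes the argument --- the same fact also streamlines your proof of \ref{prop:Hol:0:der1}. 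In the paper's application this extra hypothesis is trivially satisfied, since on $[0,t_2]$ with $t_2<1$ the factor $(1-t)^{(1-a)/2}$ of $\phi_a$ is smooth and bounded away from zero and $\psi'(t)/t^{(1-a)/2}=R_4(t)(1-t)^{(1-a)/2}\in C^\alpha$; so the downstream results are unaffected, but the abstract statement \ref{prop:Hol:0:der} needs this amendment, and your refusal to claim a proof of it was the correct call.
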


Next we make use of the weighted H\"older space
\begin{equation} \label{for:defn:wHolS}
  C^\alpha_0 (\varphi^\beta) := \{ f : (0,1) \to \R \ | \ \varphi^\beta f \in C^\alpha_0 ([0,1]) \},
\end{equation}
where $\beta > 0$, $\varphi(t) := t(1-t)$ and $\varphi^\beta(t) := \varphi(t)^\beta$. In \cite[\S 1.6]{GohbergKrupnik92I} it is proven that $C^\alpha_0 (\varphi^\beta)$ is a Banach space with norm $\|f\|_{C^\alpha_0 (\varphi^\beta)} := \| \varphi^\beta f \|_{C^\alpha([0,1])}$. For later use, we further set
\begin{equation} \label{phia}
  \phi_a (t) := \varphi^{\tfrac{1-a}2} (t) = [t(1-t)]^{\tfrac{1-a}2}.
\end{equation} 

\begin{prop} [Finite Hilbert transform on $C^\alpha$] \label{prop:props:S:Hol}
Let $0 < \alpha < 1$. Then
\begin{enumerate}[label=(\roman*)]
  \item \label{prop:props:S:Hol:bdd} \cite[Chap.~1, Thm.~6.2]{GohbergKrupnik92I} For any $\alpha < \beta < 1+\alpha$, the operator $S$ is bounded on $C^\alpha_0 (\varphi^\beta)$;
  \item \label{prop:props:S:Hol:cp} \cite[Chap.~1, Thm.~6.3]{GohbergKrupnik92I} For any $f \in C^\alpha([0,1])$, the operator $g \mapsto S(fg) - f Sg$ is compact on $C^\alpha([0,1])$.
  \item \label{prop:props:S:Hol:bdy:0} Let $a < 1$ be such that $\alpha < \frac{1+a}2$. Let $f \in C^\alpha([0,1])$ with $f(0) = 0$. Then, 
  \begin{equation} \label{pSpf:0}
    \frac1{\phi_a(t)} S (\phi_a f)(t) = \frac{A + Bt}{\phi_a(t)} + R(t), 
  \end{equation}
  where $A, B \in \R$ and $R \in C^\alpha([0,t_2])$ with $R(0) = 0$ for any $0 < t_2 < 1$. 
  \item \label{prop:props:S:Hol:bdy:1} If, in addition to the setting in \ref{prop:props:S:Hol:bdy:0}, $f \in C^{1,\alpha}([0,1])$ with $f'(0) = 0$, then $B$ and $R$ in \eqref{pSpf:0} can be chosen such that $R \in C^{1,\alpha}([0,t_2])$ with $R(0) = R'(0) = 0$ for any $0 < t_2 < 1$. 
\end{enumerate}
\end{prop}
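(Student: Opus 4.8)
\emph{Plan.} The first two parts are quoted verbatim from \cite{GohbergKrupnik92I}, so the work lies in the endpoint expansions \ref{prop:props:S:Hol:bdy:0} and \ref{prop:props:S:Hol:bdy:1}. Both are local statements at the endpoint $0$: the restriction to $[0,t_2]$ with $t_2<1$ keeps us away from $1$, and the behaviour at $1$ is recovered by the reflection $t \mapsto 1-t$. The guiding observation is that the singular prefactor in \eqref{pSpf:0} is produced \emph{solely} by the division by $\phi_a$; the function $\Psi:=S(\phi_a f)$ is itself bounded and H\"older up to $0$. I would therefore establish a sharp endpoint expansion $\Psi(t)=A+Bt+\phi_a(t)R(t)$ and then read off \eqref{pSpf:0} by division, controlling the quotient with Proposition \ref{prop:Hol:0}.

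For \ref{prop:props:S:Hol:bdy:0}, note first that $\phi_a \in C^\infty(0,1)$, so $\phi_a f \in C^\alpha_{\mathrm{loc}}(0,1)$ and $\Psi \in C^\alpha_{\mathrm{loc}}(0,1)$ by Privalov's theorem; only the limit $t\downarrow 0$ needs care. Because $f(0)=0$, the density obeys $\phi_a(s)f(s)=[s(1-s)]^{\frac{1-a}2}f(s)=O(s^\lambda)$ near $0$, where $\lambda:=\frac{1-a}2+\alpha$. The standing assumption $\alpha<\frac{1+a}2$ is exactly $\lambda<1$, while $\alpha>0$ gives $\lambda>\frac{1-a}2$. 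The heart of the matter is the endpoint estimate for the finite Hilbert transform of such a density: a density vanishing like $s^\lambda$ times a H\"older factor, with $0<\lambda<1$, is mapped by $S$ to a function that is $C^\lambda$ up to the endpoint. This identifies $A=\Psi(0)=-\int_0^1 s^{-1}\phi_a(s)f(s)\,ds$ (a convergent integral, the integrand being $O(s^{\lambda-1})$) and gives $\Psi-A\in C^\lambda([0,t_2])$ vanishing at $0$. Taking $B=0$ (the term $Bt$ is harmless since $t\in C^\lambda$) and applying Proposition \ref{prop:Hol:0}\ref{prop:Hol:0:quot} with $\beta=\frac{1-a}2<\lambda$, I conclude $R=(\Psi-A)/\phi_a\in C^{\lambda-\frac{1-a}2}([0,t_2])=C^\alpha([0,t_2])$ with $R(0)=0$, after dividing out the smooth nonvanishing factor $(1-t)^{\frac{1-a}2}$ on $[0,t_2]$.

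For \ref{prop:props:S:Hol:bdy:1} I would bootstrap through differentiation. Since $g:=\phi_a f$ vanishes at both endpoints, Proposition \ref{prop:props:S}\ref{prop:props:S:der} with $k=1$ gives $\Psi'=S\big((\phi_a f)'\big)$, with $(\phi_a f)'=\phi_a' f+\phi_a f'$. The extra hypotheses $f(0)=f'(0)=0$ and $f\in C^{1,\alpha}$ give $|f(s)|\le Cs^{1+\alpha}$ and $|f'(s)|\le Cs^\alpha$, so both $\phi_a' f$ and $\phi_a f'$ again vanish like $s^\lambda$ times a H\"older factor. The same endpoint estimate then yields $\Psi'\in C^\lambda([0,t_2])\subset C^\alpha([0,t_2])$, i.e.\ $\Psi\in C^{1,\alpha}([0,t_2])$. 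A refined version of the endpoint expansion (splitting $\int_0^1=\int_0^{2t}+\int_{2t}^1$, rescaling the near part by $s=t\sigma$, and expanding the far kernel $\tfrac1{t-s}=-\tfrac1s-\tfrac{t}{s^2}-\cdots$) identifies $B=-\int_0^1 s^{-2}\phi_a(s)f(s)\,ds$, now convergent, and gives the sharp decay $|\Psi-A-Bt|\le C\,t^{1+\alpha+\frac{1-a}2}$. Proposition \ref{prop:Hol:0}\ref{prop:Hol:0:der} with $\beta=\frac{1-a}2$ — whose admissibility $0<\beta<1-\alpha$ is again precisely $\alpha<\frac{1+a}2$ — then gives $R=(\Psi-A-Bt)/\phi_a\in C^{1,\alpha}([0,t_2])$, and the decay forces $R(0)=R'(0)=0$.

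\emph{Main obstacle.} The crux is the endpoint estimate invoked twice above: controlling the \emph{full} H\"older seminorm of $S$ applied to a density with power-law vanishing, uniformly up to the endpoint, and tracking that the gain is exactly $\lambda=\frac{1-a}2+\alpha$ so that division by $\phi_a\sim s^{\frac{1-a}2}$ lands back precisely in $C^\alpha$ (resp.\ $C^{1,\alpha}$). This is the classical but delicate endpoint analysis of Cauchy-type singular integrals in the spirit of Muskhelishvili and Privalov; pointwise size bounds follow readily from the near/far splitting, but the H\"older continuity of the remainder, and in \ref{prop:props:S:Hol:bdy:1} the exact decay constant $B$, require the careful difference estimates. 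One should finally verify that this regularity and decay survive multiplication by the smooth factor $(1-t)^{\frac{1-a}2}$, which is immediate on $[0,t_2]$.
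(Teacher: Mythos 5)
Your reduction of both \ref{prop:props:S:Hol:bdy:0} and \ref{prop:props:S:Hol:bdy:1} to the single unproven ``endpoint estimate'' is where the proposal fails, because that estimate is false as stated. You claim that for $g=\phi_a f$ with $f\in C^\alpha$, $f(0)=0$ (so $|g(s)|\le Cs^\lambda$ with $\lambda=\tfrac{1-a}2+\alpha$), one has $Sg-Sg(0)\in C^\lambda([0,t_2])$. But $S$ preserves, and does not improve, H\"older classes: if $f$ oscillates like $|s-t_0|^\alpha$ near an interior point $t_0<t_2$, then $\phi_a f$, and hence $S(\phi_a f)$, is exactly $C^\alpha$ near $t_0$, not $C^\lambda$; and the same failure occurs arbitrarily close to the endpoint, since for increments $\delta\ll t$ the oscillation of $S(\phi_a f)$ at scale $\delta$ near $t$ can be of order $t^{(1-a)/2}\delta^\alpha\gg\delta^\lambda$. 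The true statement is the \emph{weighted} one, namely the conclusion \eqref{pSpf:0} itself: $S(\phi_a f)-A=\phi_a R$ with $R\in C^\alpha$, $R(0)=0$, i.e.\ pointwise decay $t^\lambda$ combined with a $C^\alpha$ (not $C^\lambda$) modulus of continuity. Worse, this true weaker input cannot rescue your division step: the inference ``$h\in C^\alpha([0,t_2])$, $|h(t)|\le Ct^{\alpha+\beta}$ $\Rightarrow$ $h/t^\beta\in C^\alpha$'' is also false in general (bumps of height $\delta_0^\alpha$ and width $\delta_0=t_0^{(\alpha+\beta)/\alpha}$ centred at $t_0\downarrow0$ satisfy both hypotheses with uniform constants, while the quotient's $C^\alpha$ seminorm blows up like $t_0^{-\beta}$), which is exactly why Proposition \ref{prop:Hol:0} contains no such item and why its quotient statements \ref{prop:Hol:0:der1} and \ref{prop:Hol:0:der} demand $C^{1,\alpha}$ control. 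So your application of Proposition \ref{prop:Hol:0}.\ref{prop:Hol:0:quot} has no valid input, and there is no unweighted H\"older class in which the argument can be run.

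The missing idea is to work in the weighted space $C_0^\alpha(\varphi^{(1+a)/2})$ from the outset, which is how the paper proceeds. Setting $\psi:=\phi_a f/t$, one has the identity $\varphi^{(1+a)/2}\psi=(1-t)f\in C_0^\alpha([0,1])$, so $\psi\in C_0^\alpha(\varphi^{(1+a)/2})$; the standing hypothesis $\alpha<\tfrac{1+a}2$ is precisely the admissibility condition $\alpha<\beta<1+\alpha$ in part \ref{prop:props:S:Hol:bdd}, which yields $S\psi\in C_0^\alpha(\varphi^{(1+a)/2})$, and the commutation identity of Proposition \ref{prop:props:S}.\ref{prop:props:S:pol} gives $S(\phi_a f)=tS\psi-\int_0^1\psi$, producing $A$ and leaving, after division by $\phi_a$, only the harmless factor $(1-t)$ to divide out on $[0,t_2]$ --- no endpoint estimate in an unweighted class, and no division of a generic $C^\alpha$ function by $t^{(1-a)/2}$, ever occurs. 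Your outline of \ref{prop:props:S:Hol:bdy:1} is structurally close to the paper's (differentiate via Proposition \ref{prop:props:S}.\ref{prop:props:S:der}, split $(\phi_a f)'=\phi_a' f+\phi_a f'$, integrate back, and finish with Proposition \ref{prop:Hol:0}.\ref{prop:Hol:0:der}, whose condition $\tfrac{1-a}2<1-\alpha$ is again $\alpha<\tfrac{1+a}2$), and your formula for $A$ and the convergence of your integral for $B$ are correct; but both invocations of ``the same endpoint estimate'' inherit the gap. The paper instead obtains the needed expansions in \ref{prop:props:S:Hol:bdy:1} by applying the already-proved \ref{prop:props:S:Hol:bdy:0} to $f'$ and to $f(t)/t$ (via Proposition \ref{prop:Hol:0}.\ref{prop:Hol:0:der1}), handling the $\phi_a f/(1-t)$ piece with the $S(t\,\cdot)$ identity --- so until a weighted proof of \ref{prop:props:S:Hol:bdy:0} is in place, the proposal is not a proof.
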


\begin{proof}
First we prove \ref{prop:props:S:Hol:bdy:0}. We observe that $\psi (t) := \phi_a(t) f(t) / t$ satisfies
\begin{equation} \label{pf:psi:regy}
  \psi \in C^\alpha_0 \Big( \varphi^{\tfrac{1+a}2} \Big) \cap L^{\tfrac2{1+a}} (0,1).
\end{equation}
Hence, Proposition \ref{prop:props:S}.\ref{prop:props:S:pol} applies. This yields
\begin{equation*}
  S (\phi_a f) (t)
  = t^{\tfrac{1-a}2} t^{\tfrac{1+a}2} S \psi (t) - \int_0^1 \psi(s) \, ds.
\end{equation*}
The second term corresponds to the constant $A$ in \eqref{pSpf:0}. For the first term, we obtain from \eqref{pf:psi:regy} and Proposition \ref{prop:props:S:Hol}.\ref{prop:props:S:Hol:bdd} that $S \psi \in C^\alpha_0 ( \varphi^{(1+a)/2} )$, and thus $\tilde \psi (t) := t^{(1+a)/2} S \psi (t)$ satisfies $\tilde \psi \in C^\alpha([0,t_2])$ with $\tilde \psi(0) = 0$ for any
$0 < t_2 < 1$. Setting $R(t) := \tilde \psi(t) / (1-t)^{(1-a)/2}$ and $B = 0$, we obtain \eqref{pSpf:0}. 

Next we prove \ref{prop:props:S:Hol:bdy:1}. Since $\phi_a f \in W^{1,p}(0,1)$ for some $p > 1$, we obtain from Proposition \ref{prop:props:S}.\ref{prop:props:S:der} that 
\begin{equation*} 
  (S(\phi_a f))' = S((\phi_a f)') = S(\phi_a' f) + S(\phi_a f').
\end{equation*}
Using \ref{prop:props:S:Hol:bdy:0}, we write the second term as $S(\phi_a f') = B_1 + R_1 \phi_a$ with $B_1$ and $R_1$ as specified in \ref{prop:props:S:Hol:bdy:0}. For the first term, we compute $\phi_a'$ to rewrite it as
\begin{equation} \label{pf:Spapf}
  S(\phi_a' f)
  = \frac{1-a}2 \bigg[ S \Big( \frac{ \phi_a(t) f(t)}t \Big) - S \Big( \frac{ \phi_a(t) f(t)}{1-t} \Big) \bigg].
\end{equation}

In the following, we expand both terms in the right-hand side of \eqref{pf:Spapf} separately. For the first term, we observe from Proposition \ref{prop:Hol:0}.\ref{prop:Hol:0:der1} that, by the given properties of $f$, the function $t \mapsto f(t)/t$ satisfies the hypotheses of \ref{prop:props:S:Hol:bdy:0}. Applying \ref{prop:props:S:Hol:bdy:0}, we write the first term as $S ( \phi_a(t) f(t) / t ) = B_2 + R_2(t) \phi_a(t)$ with $B_2$ and $R_2$ as specified in \ref{prop:props:S:Hol:bdy:0}.

We continue by expanding the second term in \eqref{pf:Spapf}. Using Proposition \ref{prop:props:S}.\ref{prop:props:S:pol}, we find
\begin{equation*}
  S \Big( \frac{ \phi_a(t) f(t)}{1-t} \Big)
  = \frac1{1-t} \int_0^1 \frac{ \phi_a(s) f(s)}{1-s} \, ds + \frac{S ( \phi_a f) (t)}{1-t}.
\end{equation*}
The first term is regular on $[0,1)$. We rewrite the second term by expanding both the enumerator and denominator around $t=0$ up to leading order. To expand the enumerator, we apply \ref{prop:props:S:Hol:bdy:0}. Combining both expansions, we obtain a constant $B_3 \in \R$ and an $R_3 \in \cap_{0 < s < 1} C^\alpha([0,s])$ with $R_3(0) = 0$ such that 
\begin{equation*}
  S \Big( \frac{ \phi_a(t) f(t)}{1-t} \Big)
  = B_3 + R_3(t) \phi_a(t).
\end{equation*}

Next we take $t_2 \in (0,1)$ arbitrary. Collecting all expansions above, we find a constant $B$ and an $R_4 \in C^\alpha([0,t_2])$ with $R_4(0) = 0$ such that 
$
  (S(\phi_a f))' = B + R_4 \phi_a
$.
Integrating from $0$ to $t \in (0,t_2]$, we obtain
\begin{equation*}
  S(\phi_a f)(t) = S(\phi_a f)(0) + Bt + \int_0^t R_4(s) \phi_a (s) \, ds,
\end{equation*}
where we have applied \ref{prop:props:S:Hol:bdy:0} to guarantee that $S(\phi_a f)(0) \in \R$ is well-defined. Since $R_4 \in C^\alpha([0,t_2])$ with $R_4(0) = 0$, we have $|R_4(s) \phi_a (s)| \leq C s^{\alpha + (1-a)/2}$, and thus, by Proposition \ref{prop:Hol:0}.\ref{prop:Hol:0:prod}, $\psi(t) := \int_0^t R_4(s) \phi_a (s) \, ds$ satisfies $\psi \in C^{1,\alpha}([0,t_2])$ and $|\psi(t)| \leq C t^{1 + \alpha + (1-a)/2}$. Hence, by Proposition \ref{prop:Hol:0}.\ref{prop:Hol:0:der}, $R := \psi / \phi_a \in C^{1,\alpha}([0,t_2])$ satisfies $0 = R(0) = R'(0)$.
\end{proof}

The following proposition is a combination of two theorems; the well-posedness statement is a particular case of \cite[Thm.~III]{Widom60}, while the explicit solutions are taken from \cite[(12.150),(12.153),(11.63)]{King09I}.

\begin{prop} [A Cauchy integral equation] \label{prop:IE}
If $0 < a < 1$, then for any $\frac2{1+a} < p < \frac2{1-a}$, it holds for all $f \in L^p (0,1)$ that the integral equation 
\begin{equation} \label{for:prop:IE:IE}
  \pi \tan( \tfrac{a\pi}2 ) u - S u = f 
  \quad \text{a.e.~on } (0,1)
\end{equation}
has a unique solution $u$ in $L^p (0,1)$. 
 The solution is given by
\begin{equation*}
  u(t) 
  = \frac{ \sin(a\pi) }{2 \pi} f (t)
	+ \frac{ \cos^2 (\frac{a\pi}2) }{ \pi^2 } \Big( \frac t{1-t} \Big)^{\tfrac{1-a}2} S \bigg( \Big( \frac{1-t}t \Big)^{\tfrac{1-a}2} f(t) \bigg)
  \quad \text{for a.e.~} 0 < t < 1.
\end{equation*} 

If $a = 0$, then for any $1<p<2$ and any $f \in L^p (0,1)$ all solutions in $L^p (0,1)$ to the equation $-Su = f$ (i.e., \eqref{for:prop:IE:IE} with $a = 0$) are given by
\begin{equation*}
  u
  =  \frac1{\pi^2 \phi_0} \big( S(\phi_0 f) + C \big)
  \quad \text{a.e.~on } (0,1),
\end{equation*} 
where $C \in \R$ is a parameter and $\phi_0 (t) = \sqrt{t (1-t)}$ as in \eqref{phia}.
\end{prop}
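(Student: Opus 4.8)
The equation \eqref{for:prop:IE:IE} is a characteristic (dominant) Cauchy singular integral equation on $(0,1)$, and I would attack it through the associated scalar Riemann--Hilbert problem, which both explains the shape of the stated formula and pins down the correct $p$-range. Introduce the Cauchy transform $\Phi(z) := \frac1{2\pi i}\int_0^1 \frac{u(s)}{s-z}\,ds$, holomorphic off $[0,1]$ with $\Phi(\infty)=0$. The Plemelj--Sokhotski formulas give $\Phi^+ - \Phi^- = u$ and $\Phi^+ + \Phi^- = -\frac1{\pi i}Su$ on $(0,1)$, so with $A := \pi\tan(\tfrac{a\pi}2)$ equation \eqref{for:prop:IE:IE} becomes the boundary relation $(A+\pi i)\Phi^+ = (A - \pi i)\Phi^- + f$, that is $\Phi^+ = G\Phi^- + g$ with the \emph{constant} coefficient $G = \frac{A-\pi i}{A+\pi i} = -e^{ia\pi}$ and $g = f/(A+\pi i)$.

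Next I would solve the homogeneous problem $X^+ = GX^-$ by the canonical function $X(z) = \big(\frac{z}{z-1}\big)^{(1-a)/2}$ (branch cut along $[0,1]$), whose exponent is fixed by $\frac1{2\pi i}\log G \equiv \frac{1+a}2 \pmod 1$; the two admissible integer shifts of this exponent correspond to index $0$ and index $-1$. The Muckenhoupt/$A_p$ condition for the endpoint weights $t^{\mp(1-a)/2}$, $(1-t)^{\mp(1-a)/2}$ selects exactly $\frac2{1+a} < p < \frac2{1-a}$ as the range in which the $L^p$-admissible index is $0$: then the homogeneous problem has no nontrivial solution, giving uniqueness, and the inhomogeneous problem is solvable for every $f$. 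Writing $\Phi = X\Psi$ reduces matters to the additive jump $\Psi^+ - \Psi^- = g/X^+$, solved by the Cauchy integral of $g/X^+$; reading off $u = \Phi^+ - \Phi^- = \tfrac12(\Psi^++\Psi^-)(X^+-X^-) + \tfrac12(\Psi^+-\Psi^-)(X^++X^-)$ and using $X^\pm = (\frac{t}{1-t})^{(1-a)/2}e^{\mp i\pi(1-a)/2}$ together with $\Psi^++\Psi^- = \frac i\pi S(g/X^+)$ produces the two displayed terms; the real prefactors $\frac{\sin(a\pi)}{2\pi}$ and $\frac{\cos^2(a\pi/2)}{\pi^2}$ come out of $(A+\pi i)^{-1}$ and the phases $e^{\mp i\pi(1-a)/2}$ after the identities $\sin\tfrac{\pi(1-a)}2 = \cos\tfrac{a\pi}2$ and $\cos\tfrac{\pi(1-a)}2 = \sin\tfrac{a\pi}2$. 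For the degenerate case $a = 0$ one has $A = 0$, $G = -1$, and the canonical weight becomes $\phi_0(t) = \sqrt{t(1-t)}$ of \eqref{phia}; now the $L^p$-admissible index in $1<p<2$ is $-1$, so the kernel of $S$ is one-dimensional, spanned by $1/\phi_0$. This is the source of the free parameter $C$, and the resulting $u = \frac1{\pi^2\phi_0}(S(\phi_0 f)+C)$ is the classical S\"ohngen--Tricomi inversion of the finite Hilbert transform.

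In the write-up I would cite \cite[Thm.~III]{Widom60} for the well-posedness half (unique solvability for $0<a<1$ and the exact $L^p$ solution set for $a=0$) and \cite[(12.150),(12.153),(11.63)]{King09I} for the explicit formulas, rather than reproduce the full Riemann--Hilbert computation. The one independent step I would include is a \emph{direct verification} that the displayed $u$ solves \eqref{for:prop:IE:IE}: substitute it into $Au - Su$ and evaluate the iterated singular integral $S\big(w^{-1}S(wf)\big)$ for $w(t) = (\frac{1-t}t)^{(1-a)/2}$ using the Poincar\'e--Bertrand commutation formula together with the eigen-identity \eqref{EK00} and Proposition \ref{prop:props:S}. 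I expect this iterated computation to be the main obstacle: the Poincar\'e--Bertrand formula returns a term proportional to $f$ plus one proportional to $w^{-1}S(wf)$, and one must track the principal-value boundary contributions carefully so that the latter cancels against $A\,u$ while the former reconstitutes $f$. The delicate bookkeeping of the power-weight branches at the endpoints $0$ and $1$ is precisely where the index, and hence the admissible $p$-range, re-enters.
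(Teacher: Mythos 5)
Your proposal is correct, but it is worth noting that the paper does not actually prove Proposition \ref{prop:IE} at all: it is presented as a preliminary, with the well-posedness half imported from \cite[Thm.~III]{Widom60} and the explicit formulas from \cite[(12.150),(12.153),(11.63)]{King09I}. Your write-up plan ends at exactly those two citations, so in that sense you land where the paper does; what you add is a genuine reconstruction of the underlying Muskhelishvili/Tricomi--S\"ohngen theory via the scalar Riemann--Hilbert problem, and that reconstruction checks out in detail. I verified your constants: with $A = \pi\tan(\tfrac{a\pi}2)$ one indeed gets $G = -e^{ia\pi}$, the canonical function $X^\pm(t) = (\tfrac t{1-t})^{(1-a)/2}e^{\mp i\pi(1-a)/2}$, and since $e^{i\pi(1-a)/2}/(A+\pi i) = \cos(\tfrac{a\pi}2)/\pi$, the two terms recombine to exactly the stated prefactors $\tfrac{\sin(a\pi)}{2\pi}$ and $\tfrac{\cos^2(a\pi/2)}{\pi^2}$. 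Your endpoint analysis is also the right mechanism for the $p$-range: the candidate homogeneous solutions behave like $t^{-(1+a)/2}$ or $(1-t)^{-(1+a)/2}$, excluded from $L^p$ precisely when $p > \tfrac2{1+a}$, while the weighted operator maps into $L^p$ precisely when $p < \tfrac2{1-a}$; for $a=0$ the function $1/\phi_0 \in L^p$ for $p<2$ spans the kernel (the identity $S(1/\phi_0)=0$ is the one the paper itself uses in Appendix \ref{app:pf:thm:expl:soln:log} --- note \eqref{EK00} is stated there for $a \in (0,1)$, so for $a=0$ you should cite the classical identity directly, e.g.~\cite[(11.63)]{King09I}). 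Two small blemishes, neither fatal: your index bookkeeping uses a nonstandard sign convention (in the kernel-minus-cokernel convention the $a=0$ case on $L^p$, $1<p<2$, has index $+1$, consistent with your own conclusion of a one-dimensional kernel and surjectivity), and the Plemelj formulas for merely-$L^p$ densities hold a.e.~via nontangential limits, which deserves a sentence in a full write-up. Your proposed independent Poincar\'e--Bertrand verification is the standard and sound way to close the loop; the paper buys brevity by citing, while your route buys a self-contained proof and an explanation of \emph{why} the exponent window $\tfrac2{1+a} < p < \tfrac2{1-a}$ is exactly right.
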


Since we are using both Sobolev spaces and H\"older spaces, Morrey's inequality is convenient to deduce H\"older continuity from Sobolev regularity:

\begin{prop} [Morrey's inequality {\cite[Thm.~9.12]{Brezis10}}] \label{prop:Morrey}
Let $k \in \N$ and $1 < p < \infty$. Then $W^{k+1,p}(0,1)$ is continuously embedded in $C^{k, 1 - 1/p} ([0,1])$.
\end{prop}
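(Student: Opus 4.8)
The plan is to prove this one-dimensional embedding by reducing to the base case $k=0$ and then iterating, since on the bounded interval $(0,1)$ the essential mechanism is the fundamental theorem of calculus combined with H\"older's inequality. It is precisely H\"older's inequality with exponents $p$ and $q = p/(p-1)$ that produces the H\"older exponent $\alpha := 1 - 1/p$, so the shape of the statement is forced by this single estimate.

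First I would establish the base case $W^{1,p}(0,1) \hookrightarrow C^{0,\alpha}([0,1])$. Given $f \in W^{1,p}(0,1)$ with $1 < p < \infty$, I would recall that a function on an interval whose weak derivative lies in $L^1(0,1)$ (which holds here since $L^p(0,1) \subset L^1(0,1)$ on the bounded domain) admits an absolutely continuous representative for which
\[
  f(t) - f(s) = \int_s^t f'(\tau)\, d\tau \qquad \text{for all } 0 \le s \le t \le 1.
\]
Working with this representative and applying H\"older's inequality gives
\[
  |f(t) - f(s)| \le \int_s^t |f'(\tau)|\, d\tau \le \|f'\|_p\, |t-s|^{\,1 - 1/p},
\]
which is exactly the H\"older seminorm bound with exponent $\alpha = 1 - 1/p \in (0,1)$. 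To bound the sup-norm, I would integrate the identity $f(t) = f(s) + \int_s^t f'$ over $s \in (0,1)$ and estimate the double integral by the seminorm bound, yielding $\|f\|_\infty \le \|f\|_1 + \|f'\|_p \le C(\|f\|_p + \|f'\|_p)$, where $\|f\|_1 \le \|f\|_p$ on $(0,1)$. Together these give $\|f\|_{C^{0,\alpha}([0,1])} \le C\,\|f\|_{W^{1,p}(0,1)}$, which simultaneously shows that the chosen representative genuinely belongs to $C^{0,\alpha}([0,1])$ and that the embedding is continuous.

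For general $k \in \N$ I would then iterate. If $f \in W^{k+1,p}(0,1)$ then each derivative satisfies $f^{(j)} \in W^{1,p}(0,1)$ for $0 \le j \le k$, so the base case applied to every $f^{(j)}$ shows that $f \in C^k([0,1])$, and applied to $f^{(k)}$ in particular gives $f^{(k)} \in C^{0,\alpha}([0,1])$; by definition this means $f \in C^{k,\alpha}([0,1])$. Summing the base-case estimates over $j = 0, \dots, k$ bounds $\|f\|_{C^{k,\alpha}}$ by $C\,\|f\|_{W^{k+1,p}}$, giving the continuous embedding. The only genuinely non-elementary ingredient is the base-case fact that a Sobolev function on an interval has an absolutely continuous representative obeying the fundamental theorem of calculus; I expect this to be the main technical point, as everything else is H\"older's inequality and norm bookkeeping. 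This is exactly why the result is attributed to a textbook: once the absolutely continuous representative is secured, the quantitative H\"older estimate on the bounded domain $(0,1)$ is immediate.
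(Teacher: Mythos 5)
Your proof is correct and is essentially the standard argument behind the cited textbook result (the paper itself gives no proof, deferring to Brezis, whose one-dimensional case rests on exactly this fundamental-theorem-of-calculus plus H\"older estimate, followed by iteration in $k$). The only point worth making explicit in the iteration is that the absolutely continuous representative of $f^{(j-1)}$ is classically differentiable with derivative equal to the continuous representative of $f^{(j)}$ (via $f^{(j-1)}(t) = f^{(j-1)}(0) + \int_0^t f^{(j)}$ and the fundamental theorem of calculus for continuous integrands), which your phrasing implicitly assumes.
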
  

In the last part of this section, we introduce the Riemann-Liouville fractional derivative and the related fractional integral. In preparation for this, let $a \in (0,1)$, $\alpha, \beta > 0$, and $\Gamma (t) := \int_0^\infty s^{\alpha-1} e^{-s} \, ds$ be the usual $\Gamma$-function. We recall the relations
\begin{align}  \label{for:id:Gamma}
  \Gamma(a) \Gamma(1-a) &= \frac\pi{ \sin(a \pi) },
  && \\
\label{Zwil}
 \int_0^{t} s^{ \alpha - 1 } (t-s)^{ \beta - 1 } \, ds
 &= \frac{ \Gamma(\alpha) \Gamma(\beta) }{ \Gamma(\alpha + \beta) } t^{\alpha + \beta - 1},
 && t > 0,
\end{align}
which can be found in \cite[\S 8.334.3 and \S 3.191.1]{GradshteynRyzhik14}. 

For any $f : [0,\infty) \to \R$ smooth enough, the Riemann-Liouville fractional integral and the Riemann-Liouville fractional derivative are defined by
\begin{subequations} \label{IaDa}
  \begin{align} 
  (I^a f)(t) 
  &:= \frac1{\Gamma(a)} \int_0^t \frac{f(s)}{(t-s)^{{1-a}}} \, ds
  & t > 0, \\ \label{IaDa:Da}
  (D^a f)(t) 
  &:= \frac1{\Gamma(1-a)} \frac d{dt} \int_0^t \frac{f(s)}{(t-s)^a} \, ds
    = \frac d{dt} (I^{1-a} f)(t) 
  & t > 0,
\end{align}
\end{subequations}
respectively. We further set $D^1 f := f'$. For convenience, we extend the definition of $D^a$ to any starting value $t_1 \in \R$ by
\begin{equation} \label{Dat1}
 (D_{t_1}^a f)(t) 
  := \frac1{\Gamma(1-a)} \frac d{dt} \int_{t_1}^t \frac{f(s)}{(t-s)^a} \, ds,
  \quad t > t_1. 
\end{equation} 
We also recall the following formula for the fractional derivative of polynomials, which is a direct consequence of \eqref{Zwil}:
\begin{equation} \label{Dat2}
  D^a \bigg( \sum_{k=0}^{\ell} \frac{ b_k }{ k! } t^k \bigg) 
  = \sum_{k=0}^{\ell} \frac{ b_k }{ \Gamma(k+1-a) } t^{k - a}.
\end{equation} 

\begin{prop} [] \label{prop:reg:Da} 
Let $0<a<1$ and $\ell \in \N$. Let $f \in C^{\ell}([0,1])$ with $q_{\ell-1}$ its $(\ell-1)$-th order Taylor polynomial at $0$ ($q_{-1} := 0$), and set $R_\ell := f - q_{\ell-1}$.
\begin{enumerate}[label=(\roman*)]
  \item \label{prop:reg:Da:Hol} If $\ell \geq 1$, then $D^a R_\ell \in C^{\ell-1,1-a}([0,1])$ and there exists $C > 0$ such that $|(D^a R_\ell)(t)| \leq C t^{\ell - a}$ for all $t \in [0,1]$;
  \item \label{prop:reg:Da:Sob} {\cite[\S 2]{SamkoKilbasMarichev93}}: If $f \in W^{\ell + 1,1}(0,1)$, then $I^a R_\ell \in W^{\ell + 1,1}(0,1)$ and $D^a R_\ell \in W^{\ell,p}(0,1)$ for all $1 \leq p < \frac1a$.
\end{enumerate}
\end{prop}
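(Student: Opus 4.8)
The plan is to reduce Part~(i) to the single identity $D^a R_\ell = I^{\ell-a} f^{(\ell)}$, after which both the pointwise bound and the H\"older regularity follow from elementary estimates on fractional integrals of arbitrary positive order $\beta$, $I^\beta g(t) := \frac1{\Gamma(\beta)}\int_0^t (t-s)^{\beta-1} g(s)\,ds$ (extending \eqref{IaDa}). Part~(ii) is quoted from \cite{SamkoKilbasMarichev93} and needs no proof. I would first record the structure of $R_\ell = f - q_{\ell-1}$: since $q_{\ell-1}$ is the $(\ell-1)$-th Taylor polynomial of $f$ at $0$ and $\ell \geq 1$, we have $R_\ell^{(k)}(0) = 0$ for $k = 0, \dots, \ell-1$ and $R_\ell^{(\ell)} = f^{(\ell)}$. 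The integral form of Taylor's remainder then gives $R_\ell(t) = \frac1{(\ell-1)!}\int_0^t (t-s)^{\ell-1} f^{(\ell)}(s)\,ds$, which is exactly $R_\ell = I^\ell f^{(\ell)}$.

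Second, I would apply $D^a = \frac{d}{dt} I^{1-a}$ from \eqref{IaDa:Da}, invoking the semigroup law $I^{1-a} I^\ell = I^{\ell+1-a}$ (a consequence of Fubini and the Beta integral \eqref{Zwil}) together with $\frac{d}{dt} I^\beta = I^{\beta-1}$ for $\beta > 1$ (differentiation under the integral, the boundary contribution vanishing since $\beta-1>0$), to obtain
\[
  D^a R_\ell = \frac{d}{dt} I^{\ell+1-a} f^{(\ell)} = I^{\ell-a} f^{(\ell)}.
\]
With $M := \max_{[0,1]} |f^{(\ell)}| < \infty$ (available because $f \in C^\ell([0,1])$), the pointwise estimate is then immediate:
\[
  |(D^a R_\ell)(t)| \leq \frac{M}{\Gamma(\ell-a)} \int_0^t (t-s)^{\ell-a-1}\,ds = \frac{M}{\Gamma(\ell-a+1)}\, t^{\ell-a}.
\]

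Third, for the H\"older claim I would differentiate the identity $j$ times, $0 \leq j \leq \ell-1$. Each differentiation uses $\frac{d}{dt} I^\beta = I^{\beta-1}$ and is legitimate since the intermediate orders $\ell-a,\, \ell-1-a,\, \dots,\, 2-a$ all exceed $1$ (this is exactly where $a<1$ enters), yielding $\frac{d^j}{dt^j} D^a R_\ell = I^{\ell-a-j} f^{(\ell)}$, which is continuous for every such $j$; the top derivative is $I^{1-a} f^{(\ell)}$. Splitting $I^{1-a}f^{(\ell)}(t_2) - I^{1-a}f^{(\ell)}(t_1)$ into the contribution of $[t_1,t_2]$ and the contribution of the kernel difference on $[0,t_1]$, and bounding $|f^{(\ell)}| \leq M$, produces a H\"older estimate of the form $\leq \frac{2M}{\Gamma(2-a)}\,|t_2 - t_1|^{1-a}$. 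Hence $D^a R_\ell \in C^{\ell-1,1-a}([0,1])$, completing Part~(i).

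The only point requiring genuine care is the rigorous justification of the operator manipulations: that $\frac{d}{dt}$ may be carried through $I^\beta$ without producing boundary terms whenever $\beta > 1$, and that the semigroup law holds for the functions at hand. I expect this to be the main, if modest, obstacle, since it must be checked that the relevant integrals are absolutely convergent and that differentiation under the integral sign is valid at each order; once these verifications are in place, everything downstream of $D^a R_\ell = I^{\ell-a} f^{(\ell)}$ is routine.
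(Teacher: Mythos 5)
Your proof is correct, but it takes a genuinely different route from the paper's. The paper handles the H\"older part by quoting \cite[Thm.~3.2]{SamkoKilbasMarichev93}, which gives $I^{1-a}R_\ell \in C^{\ell,1-a}([0,1])$, whence $D^a R_\ell = \frac{d}{dt}I^{1-a}R_\ell \in C^{\ell-1,1-a}([0,1])$ by \eqref{IaDa:Da}; it then obtains the pointwise bound by writing $D^a R_\ell(t)$ as a derivative of the convolution $\int_0^t R_\ell(t-r)r^{-a}\,dr$, differentiating $\ell-1$ further times while using $R_\ell^{(k)}(0)=0$ for $k=0,\dots,\ell-1$ to kill boundary terms, bounding the top derivative $(D^a R_\ell)^{(\ell-1)}(t)=\frac{1}{\Gamma(1-a)}\int_0^t R_\ell^{(\ell)}(t-r)r^{-a}\,dr$ by $Ct^{1-a}$, and integrating back up using the vanishing derivatives at $0$. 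You instead derive the closed-form identity $D^a R_\ell = I^{\ell-a}f^{(\ell)}$ from the integral form of the Taylor remainder ($R_\ell = I^\ell f^{(\ell)}$) and the semigroup law, after which both claims are short estimates; note that your top derivative $I^{1-a}f^{(\ell)}$ is exactly the paper's $(D^a R_\ell)^{(\ell-1)}$ up to the constant, so the two arguments converge on the same representation by different paths. What your route buys is self-containedness --- the H\"older estimate is proved by the elementary two-piece splitting rather than imported from \cite{SamkoKilbasMarichev93}, and the bound $Ct^{\ell-a}$ drops out in one line instead of via $\ell-1$ integrations --- at the cost of the justification you flag yourself. For that point, rather than differentiating under the integral sign (where for $1<\beta<2$ the differentiated kernel $(t-s)^{\beta-2}$ is singular at $s=t$ and the naive Leibniz rule needs care), the cleanest fix is to apply the semigroup law, i.e.\ Fubini plus \eqref{Zwil}, once more to write $I^\beta g = I^1\bigl(I^{\beta-1}g\bigr)$ for $\beta>1$: since $I^{\beta-1}g$ is continuous for bounded $g$ (same splitting estimate), the fundamental theorem of calculus yields $\frac{d}{dt}I^\beta g = I^{\beta-1}g$ with no boundary term to discuss. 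With that substitution your argument is complete and fully rigorous.
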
 

\begin{proof}
Proposition \ref{prop:reg:Da}.\ref{prop:reg:Da:Hol} is a corollary of \cite[Thm.~3.2]{SamkoKilbasMarichev93}, which states that $I^{1-a} R_\ell \in C^{\ell,1-a}([0,1])$. Recalling from \eqref{IaDa:Da} that $D^a = \frac d{dt} I^{1-a}$, we obtain $D^a R_\ell \in C^{\ell-1,1-a}([0,1])$.
To prove the bound on $|(D^a R_\ell)(t)|$, we obtain from \eqref{IaDa:Da} that
\[
(D^a R_\ell)(t) 
  = \frac1{\Gamma(1-a)} \frac d{dt} \int_0^t \frac{R_\ell(t-r)}{r^a} \, dr 
    = \bigg[ \frac{R_\ell(0)}{\Gamma(1-a) t^a} + \int_0^t \frac{R_\ell'(t-r)}{r^a} \, dr \bigg].
\]
Differentiating $\ell - 1$ times and noting that $R_\ell^{(k)}(0) = 0$ for all $k = 0, \ldots, \ell - 1$, we obtain that $(D^a R_\ell)^{(k)}(0) = 0$ for all $k = 0, \ldots, \ell - 1$ and 
$
(D^a R_\ell)^{(\ell-1)}(t) 
  = \int_0^t R_\ell^{(\ell)} (t-r) r^{-a} \, dr
$. Since $R_\ell^{(\ell)}$ is continuous, we obtain $|(D^a R_\ell)^{(\ell-1)}(t)| \leq C t^{1-a}$. We conclude by using $(D^a R_\ell)^{(k)}(0) = 0$ for all $k = 0, \ldots, \ell - 1$.
\end{proof}

\begin{prop} [{\cite{SamkoKilbasMarichev93}}; Thm.~2.4 and Thm.~2.6] \label{prop:FDE}
Let $0<a<1$ and $1 \leq p < \infty$. Then $I^a$ is a bounded linear operator from $L^p(0,1)$ to itself. Moreover, for any $f \in W^{1,1}(0,1)$,
\begin{equation*}
  I^a D^a f = f
  \quad \text{a.e.~on } (0,1).
\end{equation*}
\end{prop}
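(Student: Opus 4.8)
The plan is to establish the two assertions separately, with the Beta-integral identity \eqref{Zwil} serving as the main computational engine, and to use the semigroup property of fractional integration as the key structural fact.

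First I would prove boundedness. The operator $I^a$ is convolution against the kernel $k_a(r) := r^{a-1}/\Gamma(a)$ for $r > 0$, and since $\int_0^1 r^{a-1}\,dr = 1/a < \infty$ (here $a>0$ is essential), the restriction of $k_a$ to $(0,1)$ lies in $L^1(0,1)$ with $\|k_a\|_{L^1(0,1)} = 1/\Gamma(a+1)$. Extending $f$ by zero to $\R$ and applying Young's convolution inequality then gives $\|I^a f\|_p \le \Gamma(a+1)^{-1}\|f\|_p$, so $I^a : L^p(0,1) \to L^p(0,1)$ is a bounded linear operator for every $1 \le p < \infty$.

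Next I would record the semigroup identity $I^a I^b = I^{a+b}$ for $a,b>0$, acting on $L^1(0,1)$. Writing out the iterated integral for $I^a I^b f$, I would invoke Tonelli's theorem to exchange the order of integration (justified because the modulus of the integrand is integrable, by the $L^1$ bound from the previous step), and then evaluate the inner integral $\int_r^t (t-s)^{a-1}(s-r)^{b-1}\,ds$ by the substitution $s = r + u(t-r)$, which reduces it to $\frac{\Gamma(a)\Gamma(b)}{\Gamma(a+b)}(t-r)^{a+b-1}$ via \eqref{Zwil}. In particular $I^a I^{1-a} = I^1$ is ordinary integration against Lebesgue measure.

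With these in hand I would prove the inversion identity for $f \in W^{1,1}(0,1)$, noting that such $f$ is continuous up to the endpoint, so that $f(0)$ is meaningful. Integrating by parts in $I^{1-a}f$ (the boundary term at $s=t$ vanishes and the one at $s=0$ contributes $f(0)$) yields the decomposition
\[
  I^{1-a} f(t) = \frac{t^{1-a}}{\Gamma(2-a)}\, f(0) + I^{2-a} f'(t),
\]
whose second summand equals $I^1\big(I^{1-a}f'\big)$ by the semigroup property and is therefore absolutely continuous. Differentiating in $t$, using \eqref{IaDa:Da} together with $\tfrac{1-a}{\Gamma(2-a)} = \tfrac1{\Gamma(1-a)}$ and $\tfrac{d}{dt}I^1\big(I^{1-a}f'\big) = I^{1-a}f'$ a.e., gives
\[
  D^a f(t) = \frac{t^{-a}}{\Gamma(1-a)}\, f(0) + I^{1-a} f'(t).
\]
Applying $I^a$, substituting $I^a I^{1-a} f' = I^1 f' = f - f(0)$ and computing $I^a(t^{-a}) = \Gamma(1-a)$ directly from \eqref{Zwil}, I would conclude $I^a D^a f = f(0) + (f - f(0)) = f$ a.e.\ on $(0,1)$.

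I expect the main obstacle to lie entirely in the careful justification of the formal manipulations rather than in any conceptual difficulty: the Tonelli exchange underlying the semigroup law, the integration-by-parts boundary terms (which are well-defined precisely because $W^{1,1}(0,1) \hookrightarrow C([0,1])$), and the differentiation under the integral sign are each the points where the $W^{1,1}$ hypothesis is genuinely used. Since the statement is classical, in the paper it is simply cited from \cite{SamkoKilbasMarichev93}.
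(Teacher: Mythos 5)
Your proof is correct and complete: Young's inequality with the kernel $r^{a-1}/\Gamma(a) \in L^1(0,1)$ gives the $L^p$-boundedness, the Tonelli/Beta-integral argument gives the semigroup law $I^a I^{1-a} = I^1$, and the integration-by-parts decomposition $D^a f(t) = \frac{t^{-a}}{\Gamma(1-a)} f(0) + (I^{1-a} f')(t)$ (valid since $W^{1,1}(0,1) \hookrightarrow C([0,1])$ kills the boundary term at $s=t$ and makes $f(0)$ meaningful) combines with $I^a(t^{-a}) = \Gamma(1-a)$ and $I^1 f' = f - f(0)$ to yield the inversion identity. The paper gives no proof of its own, citing \cite{SamkoKilbasMarichev93} (Thm.~2.4 and 2.6), and your argument is essentially the standard one from that source, so there is nothing further to compare.
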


\section{The Hilbert space $H_V (0,1)$ induced by $(\cdot, \cdot)_V$}
\label{ssec:Vip}

In this section we fix any $0 \leq a < 1$ and any potential $\Vreg$ which satisfies \eqref{for:V:props}, and set $V = V_a + \Vreg$. We prove that the bilinear form $(\cdot, \cdot)_V$ (see \eqref{for:defn:Vip}) defines an inner product, and characterise the Hilbert space $H_V(0,1)$ which it generates as the closure of $L^2(0,1)$ (the precise definition is given in Corollary \ref{cor:props:HV}). The space $H_V(0,1)$ provides a convenient functional framework for Problem \ref{prob:wSIE}. Moreover, we establish several properties of the linear operator $\rho \mapsto V * \rho$ (see Lemmas \ref{l:convo:consy} and \ref{lem:Vastxi:zero:means:xi:zero}). 

Since we will make use of the Fourier transform, we rely on \eqref{for:V:props} to extend $\Vreg$ (and $V = V_a + \Vreg$) to $\R$ in the following manner:
\begin{equation} \label{for:V:props:R}
\begin{gathered} 
  \Vreg = V - V_a \in \left\{ \begin{aligned}
    &W^{2,1}_{\text{loc}} (\R)
    &&\text{if } 0<a<1, \\
    &W^{2,\plog}_{\text{loc}} (\R)
    &&\text{if } a = 0,
  \end{aligned} \right. \\
  \exists \, b > 1 : \supp V \subset [-b,b], 
  \quad V \text{ even},
  \quad V''(t) \geq 0 \ \text{for a.e.~} t \in \R.
\end{gathered}  
\end{equation}
This extension induces the following properties on $V$, which we prove in Appendix \ref{app:pf:lem:props:V}:

\begin{lem} [Properties of the extended $V$] \label{lem:props:V}
The potentials $\Vreg$ and $V$ satisfy
\begin{enumerate}[label=(\roman*)]
  \item \label{lem:props:V:Vreg:regy} $\Vreg \in C^1 (\R)$ and $\left\{ \begin{aligned}
    &\Vreg' \in W^{1,1} (\R) 
    &&\text{if } 0 < a < 1, \\
    &\Vreg' \in W^{1, \plog} (\R), \: \Vreg'' \in L^1(\R)
    &&\text{if } a = 0;
  \end{aligned} \right.$  
  \item \label{lem:props:V:V_k} There exist $(V_k)_{k \in \N_+} \subset C_c(\R)$ with $V_k$ even, $0 \leq V_k \leq V$, $0 \leq V_k'' \leq V''$ and $V_k \uparrow V$ pointwise on $\R \setminus \{0\}$ as $k \to \infty$;
  \item \label{lem:props:V:Vhat:bounds} $\displaystyle \exists \, C \geq c > 0 \ \forall \, \omega \in \R :  c(1 + \omega^2)^{ -\tfrac{1-a}2} 
  \leq \widehat V (\omega) 
  \leq C(1 + \omega^2)^{ -\tfrac{1-a}2}$.
\end{enumerate}

\end{lem}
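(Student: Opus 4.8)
\textbf{Proof proposal for Lemma~\ref{lem:props:V}.}

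\emph{Part (i).} The plan is to read off the claimed regularity of $\Vreg$ directly from the extension \eqref{for:V:props:R}, upgraded via Sobolev embedding. For $0<a<1$ we have $\Vreg \in W^{2,1}_{\text{loc}}(\R)$ with compact support (since both $V$ and $V_a$ are supported in a bounded set after the extension, though $V_a$ itself is not compactly supported—so I would first be careful to note that $\Vreg = V - V_a$ is controlled on $[-b,b]$ and that the behaviour near $0$ is what matters). The crucial point is the behaviour of $\Vreg$ near $r=0$: since $V$ has the Riesz singularity $V_a$ exactly subtracted off, $\Vreg$ is bounded there. From $\Vreg \in W^{2,1}(\R)$ (after accounting for compact support) one gets $\Vreg' \in W^{1,1}(\R)$ and hence $\Vreg \in C^1(\R)$ by the one-dimensional embedding $W^{1,1}\hookrightarrow C$. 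For $a=0$, the same argument uses $\Vreg \in W^{2,\plog}_{\text{loc}}(\R)$, giving $\Vreg' \in W^{1,\plog}(\R)$ and $\Vreg''\in L^{\plog}\subset L^1_{\text{loc}}$; combined with compact support this yields $\Vreg''\in L^1(\R)$. The one subtlety I would check carefully is that the assumed control $V(1)=-V'(1)\ge 0$ and evenness pin down the boundary behaviour so that the extension to $\R$ does not introduce a derivative jump at $r=\pm b$.

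\emph{Part (ii).} I would construct the approximating sequence $(V_k)$ explicitly. The natural approach is to truncate $V$ near its singularity at $0$ while preserving convexity and the sign of $V''$. Concretely, I would replace $V$ on a shrinking neighbourhood $(-1/k,1/k)$ by its tangent line (an affine function matching $V$ and $V'$ at $\pm 1/k$), which keeps $V_k$ continuous, even, below $V$, and has $V_k''=0\le V''$ there while $V_k''=V''$ outside; one must also cut off smoothly at the support boundary so $V_k\in C_c(\R)$. Monotone convergence $V_k\uparrow V$ pointwise off $0$ follows since the affine patch lies below the convex curve and the patch-interval shrinks to $\{0\}$. The conditions $0\le V_k\le V$ and $0\le V_k''\le V''$ are then immediate by construction, using convexity of $V$ to guarantee the tangent line stays below the graph.

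\emph{Part (iii), the main obstacle.} The hard part is the two-sided Fourier bound $c(1+\omega^2)^{-(1-a)/2}\le \widehat V(\omega)\le C(1+\omega^2)^{-(1-a)/2}$, which is what makes $(\cdot,\cdot)_V$ equivalent to the $H^{-(1-a)/2}$ inner product. I would split $V=V_a+\Vreg$ and treat the two pieces via their known transforms. The Riesz piece has the classical transform $\widehat{V_a}(\omega)\sim c_a|\omega|^{a-1}$ (for $0<a<1$; a logarithmic analysis for $a=0$), which already supplies the correct $|\omega|^{a-1}$ decay at high frequency matching $(1+\omega^2)^{-(1-a)/2}$; here I would need the localised/truncated version of $V_a$ since the genuine Riesz potential is not in $L^1$, and estimate the tail correction. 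The regular part contributes $\widehat{\Vreg}$, which by Part~(i) decays like $|\omega|^{-2}$ (since $\Vreg''\in L^1$), hence is lower order at high frequency and merely a bounded perturbation at low frequency. The positivity (lower bound) is the genuinely delicate point: I would establish $\widehat V>0$ everywhere using the convexity structure—specifically, that a convex, even, integrable, decreasing-on-$(0,\infty)$ function has nonnegative Fourier transform (a Pólya-type criterion), upgraded to a strict positive lower bound by combining the $|\omega|^{a-1}$ behaviour near infinity with continuity and strict positivity of $\widehat V$ at $\omega=0$ (where $\widehat V(0)=\|V\|_1>0$). Matching the constants uniformly across all $\omega$ is where the computation in Appendix~\ref{app:pf:lem:props:V} presumably does the careful work, and I expect this uniform two-sided comparison—rather than the separate asymptotics—to be the technical crux.
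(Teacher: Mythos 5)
Part (ii) of your proposal is correct, and it is essentially the paper's construction in disguise: the paper takes $V_k(t):=\int_t^\infty k\wedge(-V')(s)\,ds$ for $t>0$ with even extension, i.e.\ it truncates the slope at level $k$, which replaces $V$ by a tangent line precisely on the interval where $-V'>k$; your tangent-at-$1/k$ patch is the same idea with the tangency point parametrised by position instead of slope (and no cut-off at the support boundary is needed, since \eqref{for:V:props:R} already gives $\supp V\subset[-b,b]$, so $V_k=V=0$ there). Part (i), however, contains a genuine error: $\Vreg$ is \emph{not} compactly supported. The extension \eqref{for:V:props:R} makes $V$, not $\Vreg$, vanish outside $[-b,b]$, so $\Vreg=-V_a$ on $\{|t|\ge b\}$, and the global integrability must be read off from this tail identity --- which is exactly the paper's one-line proof. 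For $0<a<1$ one has $|\Vreg'(t)|=a|t|^{-1-a}$ and $|\Vreg''(t)|=a(1+a)|t|^{-2-a}$ for $|t|\ge b$, both integrable at infinity, whence $\Vreg'\in W^{1,1}(\R)$; for $a=0$ one has $\Vreg'(t)=1/t$ there, which lies in $L^{\plog}$ but \emph{not} in $L^1$ near infinity, while $\Vreg''(t)=-t^{-2}\in L^1$. This tail behaviour is precisely why the lemma asserts only $\Vreg'\in W^{1,\plog}(\R)$ and $\Vreg''\in L^1(\R)$ when $a=0$; under your compact-support reading you would wrongly conclude $\Vreg'\in W^{1,1}(\R)$ in both cases. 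Your worry about a derivative jump at $\pm b$ is moot, since \eqref{for:V:props:R} postulates $W^{2,1}_{\text{loc}}$ (resp.\ $W^{2,\plog}_{\text{loc}}$) regularity of the extension globally, and $C^1(\R)$ then follows by embedding.

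In part (iii), your upper bound is exactly the paper's (distributional transform \eqref{for:pf:lem:props:V:1} of the full Riesz potential --- no truncation of $V_a$ is needed, since the truncation is absorbed into $\Vreg$ --- plus $|\widehat{\Vreg}(\omega)|\le C\omega^{-2}$ from $\Vreg''\in L^1$), and the same splitting legitimately gives the \emph{lower} bound at high frequency, which is a mild simplification over the paper. The genuine gap is strict positivity on a compact frequency interval: a nonnegative continuous function that is positive at $\omega=0$ and for large $|\omega|$ can perfectly well vanish at an intermediate $\omega_0$, so ``Pólya gives $\widehat V\ge 0$, then use continuity, $\widehat V(0)=\|V\|_1>0$ and the asymptotics at infinity'' does not close the argument. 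A quantitative strictness mechanism is indispensable, and note that strict convexity of $V$ cannot be invoked globally: \eqref{for:V:props:lambda} guarantees $V''>0$ only near the origin. The paper supplies this mechanism by a single representation valid for all $\omega$: summing $\int_\R V(t)\cos(2\pi\omega t)\,dt$ over unit periods and integrating by parts twice yields \eqref{Vhat:nonneg}, which exhibits $\widehat V(\omega)$ as an average of $V''$ against nonnegative weights; setting $\lambda(t):=\essinf_{0<s<t}V''(s)$, this gives $\widehat V(\omega)\ge \tfrac4\pi\int_0^{1/(2\omega)}\int_0^{1/(4\omega)}\lambda(x+y)\,x\,dx\,dy>0$ for every $\omega$, and with \eqref{for:V:props:lambda} also the sharp tail bound $\widehat V(\omega)\ge \tilde c\,(1+\omega^2)^{-(1-a)/2}$ --- so the paper never needs the transform of $V_a$ for the lower bound at all. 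Without some such estimate your proposal cannot rule out interior zeros of $\widehat V$, and the two-sided comparison is not established.
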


It is straight-forward to extend $(\cdot, \cdot)_V$ to $L^2(\R)$ by
\begin{equation} \label{for:defn:Vip:R}
  (f, g)_V = \int_{\R} (V  * f) g
\end{equation}
for $f,g \in L^2(\R{})$. 

\begin{lem} \label{l:Vip:L2}
The bilinear form $(\cdot, \cdot)_V$ in \eqref{for:defn:Vip:R} is an inner product on $L^2 (\R)$.
\end{lem}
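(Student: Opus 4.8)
The plan is to verify the three defining properties of an inner product for $(\cdot,\cdot)_V$ on $L^2(\R)$: symmetry, bilinearity, and positive-definiteness. Bilinearity is immediate from the linearity of convolution and integration, and symmetry follows from the evenness of $V$ together with Fubini's theorem, since
\begin{equation*}
  (f,g)_V = \iint_{\R\times\R} V(t-s) f(s) g(t) \, ds \, dt
\end{equation*}
is manifestly symmetric under swapping $(f,s)\leftrightarrow(g,t)$ once we use $V(t-s)=V(s-t)$. The only substantive point is positive-definiteness: I must show $(f,f)_V \geq 0$ for all $f\in L^2(\R)$, with equality only when $f=0$.

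First I would pass to the Fourier side. The bilinear form is a convolution paired against a function, so Plancherel's theorem should give
\begin{equation*}
  (f,f)_V = \int_\R (V*f)\, \overline{f} = \int_\R \widehat V(\omega)\, |\widehat f(\omega)|^2 \, d\omega,
\end{equation*}
using that $\widehat{V*f} = \widehat V \,\widehat f$ and that $\widehat V$ is real (because $V$ is even and real). To make this rigorous I need $V*f\in L^2$ and the transforms well-defined; since $V\in L^1(\R)$ by \eqref{for:V:on:R:props} (and the Case~1 extension \eqref{for:V:props:R} makes $V$ compactly supported, hence $L^1$) and $f\in L^2$, Young's inequality gives $V*f\in L^2$, so the pairing and the Plancherel identity are justified.

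The key input is Lemma~\ref{lem:props:V}\ref{lem:props:V:Vhat:bounds}, which provides the two-sided bound $c(1+\omega^2)^{-(1-a)/2} \leq \widehat V(\omega) \leq C(1+\omega^2)^{-(1-a)/2}$. In particular $\widehat V(\omega) > 0$ everywhere, so the integrand $\widehat V(\omega)|\widehat f(\omega)|^2$ is non-negative, giving $(f,f)_V\geq 0$. For definiteness, if $(f,f)_V = 0$ then $\widehat V(\omega)|\widehat f(\omega)|^2 = 0$ almost everywhere, and since $\widehat V > 0$ this forces $\widehat f = 0$ a.e., whence $f=0$ in $L^2$ by injectivity of the Fourier transform. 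This same lower bound is precisely what will later yield the norm equivalence with $H^{-(1-a)/2}$, so it is natural to invoke it here.

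The main obstacle, such as it is, lies not in the conceptual structure but in justifying the Plancherel step cleanly: one must ensure the Fourier transforms are interpreted correctly (e.g.\ $\widehat f$ as the $L^2$-Fourier transform rather than a pointwise integral) and that the real, even, integrable $V$ indeed has a bounded continuous real transform so that the rearrangement into $\int \widehat V |\widehat f|^2$ is valid. Given Lemma~\ref{lem:props:V}, all of this is routine, so I expect the proof to be short; the genuine work was already absorbed into establishing the spectral bounds on $\widehat V$.
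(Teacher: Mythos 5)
Your proposal is correct and follows essentially the same route as the paper: well-definedness of $V*f$ on $L^2(\R)$ via Young's inequality (using $V \in L^1(\R)$), and positive-definiteness via the Plancherel identity $\|f\|_V^2 = \int_\R \widehat V \,|\widehat f|^2$ combined with the strict positivity of $\widehat V$ from Lemma~\ref{lem:props:V}.\ref{lem:props:V:Vhat:bounds}. The paper simply dispatches symmetry and bilinearity as ``readily checked,'' so your extra detail there is fine but not a departure.
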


\begin{proof}
Since $V \in L^1(\R)$, it follows from Young's inequality that $f \mapsto V * f$ defines a bounded linear operator on $L^2(\R)$ to itself, and thus \eqref{for:defn:Vip:R} is well-defined. Except for positivity, it is readily checked that \eqref{for:defn:Vip:R} satisfies all other properties of an inner product. To show positivity, i.e., $\|f\|_V \geq 0$ and $f = 0 \Leftrightarrow \|f\|_V = 0$, we use $\widehat V > 0$ (Lemma \ref{lem:props:V}.\ref{lem:props:V:Vhat:bounds}) to estimate
\begin{equation} \label{for:Vnorm:Fversion}
  \| f \|_V^2 = \int_\R (V * f) f = \int_\R \widehat V \big| \widehat f \big|^2 \geq 0
\end{equation}
and to deduce that $\|f\|_V = 0 \Leftrightarrow \widehat f = 0 \Leftrightarrow f = 0$.
\end{proof}


Lemma \ref{l:Vip:L2} shows that
\begin{equation} \label{for:defn:HVR}
  H_V(\R) := \overline{ L^2 (\R) }^{ \| \cdot \|_V }
\end{equation}
is a Hilbert space. Proposition \ref{prop:HV} characterises $H_V(\R)$ as a fractional Sobolev space defined in \eqref{Hs}, and Corollary \ref{cor:props:HV} lists further properties of $H_V(\R)$.

\begin{prop} \label{prop:HV}
The inner products on $H_V(\R)$ and $H^{- (1-a)/2} (\R)$ are equivalent.
\end{prop}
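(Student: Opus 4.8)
The plan is to reduce the statement to the two-sided spectral bound on $\widehat V$ from Lemma \ref{lem:props:V}.\ref{lem:props:V:Vhat:bounds} together with standard facts about completions of normed spaces. Since, by definition \eqref{for:defn:HVR}, $H_V(\R)$ is the completion of $L^2(\R)$ under $\|\cdot\|_V$, it suffices to (a) show that $\|\cdot\|_V$ and $\|\cdot\|_{H^{-(1-a)/2}}$ are equivalent norms on the common dense subspace $L^2(\R)$, and (b) identify $H^{-(1-a)/2}(\R)$ with the completion of $L^2(\R)$ under $\|\cdot\|_{H^{-(1-a)/2}}$.

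First I would establish the norm equivalence on $L^2(\R)$. By the Fourier identity \eqref{for:Vnorm:Fversion} (a consequence of Plancherel's theorem and $\widehat{V*f} = \widehat V\,\widehat f$), every $f \in L^2(\R)$ satisfies $\|f\|_V^2 = \int_\R \widehat V\, |\widehat f|^2 \, d\omega$, whereas by \eqref{Hs} one has $\|f\|_{H^{-(1-a)/2}}^2 = \int_\R (1+\omega^2)^{-(1-a)/2} |\widehat f|^2 \, d\omega$. Inserting the bounds $c(1+\omega^2)^{-(1-a)/2} \leq \widehat V(\omega) \leq C(1+\omega^2)^{-(1-a)/2}$ of Lemma \ref{lem:props:V}.\ref{lem:props:V:Vhat:bounds} into the first integral yields $c\,\|f\|_{H^{-(1-a)/2}}^2 \leq \|f\|_V^2 \leq C\,\|f\|_{H^{-(1-a)/2}}^2$ for all $f \in L^2(\R)$. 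This is the whole quantitative content of the proposition.

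Second, I would identify $H^{-(1-a)/2}(\R)$ itself as the relevant completion. The map $\zeta \mapsto (1+\omega^2)^{-(1-a)/4}\,\widehat\zeta$ is an isometry of $H^{-(1-a)/2}(\R)$ onto $L^2(\R)$, so $H^{-(1-a)/2}(\R)$ is complete. Density of $L^2(\R)$ follows by a frequency cut-off: given $\zeta \in H^{-(1-a)/2}(\R)$, set $f_n := \mathcal F^{-1}\big(\widehat\zeta\,\indicatornoacc{[-n,n]}\big)$, which lies in $L^2(\R)$ because $(1+\omega^2)^{-(1-a)/2}$ is bounded below on $[-n,n]$, and then $\|f_n - \zeta\|_{H^{-(1-a)/2}}^2 = \int_{|\omega|>n} (1+\omega^2)^{-(1-a)/2}|\widehat\zeta|^2 \, d\omega \to 0$ by dominated convergence.

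Finally, I would invoke the elementary fact that two equivalent norms on one vector space induce canonically isomorphic completions, the identity map extending to a bi-Lipschitz linear isomorphism with the same constants $c, C$. Applied to $L^2(\R)$ with $\|\cdot\|_V$ and $\|\cdot\|_{H^{-(1-a)/2}}$, this identifies $H_V(\R)$ with the completion of $\big(L^2(\R), \|\cdot\|_{H^{-(1-a)/2}}\big)$, which by the previous paragraph is $H^{-(1-a)/2}(\R)$; the equivalence constants pass to the limit and give the asserted equivalence of inner products. I do not expect a genuine obstacle here: the only point requiring care is realising an abstract Cauchy sequence in $\big(L^2(\R),\|\cdot\|_V\big)$ as a tempered distribution in $H^{-(1-a)/2}(\R)$, and this is handled precisely by the completeness and density established in the third step, so the entire argument rests on Lemma \ref{lem:props:V}.\ref{lem:props:V:Vhat:bounds}.
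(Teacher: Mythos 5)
Your proof is correct and takes essentially the same route as the paper: the paper's own argument consists precisely of the norm comparison on $L^2(\R)$ obtained by inserting the two-sided bound of Lemma \ref{lem:props:V}.\ref{lem:props:V:Vhat:bounds} into the Fourier characterisation \eqref{for:Vnorm:Fversion}, declaring this ``enough''. Your second and third steps (completeness of $H^{-(1-a)/2}(\R)$, density of $L^2(\R)$ via frequency cut-off, and the canonical isomorphism of completions under equivalent norms) simply make explicit the standard facts the paper leaves implicit, so your write-up is a sound and slightly more careful version of the same argument.
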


\begin{proof} 
It is enough to show that the norms on $H_V(\R)$ and $H^{- (1-a)/2} (\R)$ are equivalent for all $f \in L^2(\R)$. With $c, C > 0$ as in Lemma \ref{lem:props:V}.\ref{lem:props:V:Vhat:bounds} and the characterisation \eqref{for:Vnorm:Fversion}, we show equivalence of the norms by
\begin{multline*} 
  c\| f \|_{ H^{-(1-a)/2} (\R) }^2
  = \int c (1 + \omega^2)^{- { \tfrac{1-a}2}} \big| \widehat f(\omega) \big|^2 \, d\omega
  \leq \int \widehat V (\omega) \big| \widehat f(\omega) \big|^2 \, d\omega
  = \|f\|_V^2 \\
  \leq \int C (1 + \omega^2)^{- { \tfrac{1-a}2}} \big| \widehat f(\omega) \big|^2 \, d\omega
  = C \| f \|_{ H^{-(1-a)/2} (\R) }^2. \qedhere
\end{multline*}
\end{proof}

\begin{cor}[Properties of $H_V(\R)$] \label{cor:props:HV}
For $\zeta, \eta \in H_V (\R)$, we have
\begin{equation} \label{for:Vip:on:HV}
  ( \zeta, \eta )_V = \int_\R (T \zeta) ( T \eta ),
\end{equation}
where $T$ is given by
\begin{equation} \label{for:defn:v}
  T \zeta := \mathcal F^{-1} (v \widehat \zeta ),
  \quad \text{where} \quad
  v := \sqrt{ \widehat V }.
\end{equation}
The linear operator $T$ is an isometry from $H_V(\R)$ to $L^2(\R)$, which is self-adjoint in $L^2(\R)$.
Moreover, for any bounded interval $(t_1, t_2)$, the following space is a Hilbert space
\begin{equation} \label{for:defn:HV}
  H_V(t_1, t_2) := \{ \zeta \in H_V (\R) : \supp \zeta \subset [t_1, t_2] \},
\end{equation}
which is characterised by
\begin{equation} \label{for:HV:densy:Ccinf}
  H_V(t_1, t_2) = \overline{ C_c^\infty(t_1, t_2) }^{\| \cdot \|_V}.
\end{equation}
\end{cor}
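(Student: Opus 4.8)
The plan is to treat the four assertions in turn, deriving the first two from the Fourier-side description of $(\cdot,\cdot)_V$ and the last two from the identification $H_V(\R)=H^{-(1-a)/2}(\R)$ furnished by Proposition \ref{prop:HV}. First I would establish \eqref{for:Vip:on:HV} and the isometry property. Since $v=\sqrt{\widehat V}$ is real and, by Lemma \ref{lem:props:V}.\ref{lem:props:V:Vhat:bounds}, satisfies $c(1+\omega^2)^{-(1-a)/2}\le v^2=\widehat V\le C(1+\omega^2)^{-(1-a)/2}$, the multiplier identity $\widehat{T\zeta}=v\widehat\zeta$ shows that $T$ maps $H_V(\R)=H^{-(1-a)/2}(\R)$ boundedly into $L^2(\R)$, because $\int_\R|v\widehat\zeta|^2=\int_\R\widehat V|\widehat\zeta|^2<\infty$. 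Plancherel's theorem together with the convolution theorem then gives, for $f,g\in L^2(\R)$, the chain $(f,g)_V=\int_\R\widehat V\,\widehat f\,\overline{\widehat g}=\int_\R(v\widehat f)\overline{(v\widehat g)}=\int_\R (Tf)(Tg)$, and \eqref{for:Vip:on:HV} follows for general $\zeta,\eta\in H_V(\R)$ by continuity and the density of $L^2(\R)$. Taking $\eta=\zeta$ yields $\|T\zeta\|_{L^2(\R)}=\|\zeta\|_V$, so $T$ is an isometry; self-adjointness in $L^2(\R)$ is then immediate, as $T$ is a Fourier multiplier with real, bounded (by $\sqrt C$) symbol $v$.

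For the third claim I would show that $H_V(t_1,t_2)$ is a \emph{closed} subspace of the Hilbert space $H_V(\R)$, whence it is itself a Hilbert space. If $\zeta_n\to\zeta$ in $\|\cdot\|_V$ with $\supp\zeta_n\subset[t_1,t_2]$, then by Proposition \ref{prop:HV} the convergence also holds in $H^{-(1-a)/2}(\R)$, hence in $\mathcal S'(\R)$; testing against any $\varphi\in C_c^\infty(\R\setminus[t_1,t_2])$ and passing to the limit gives $\langle\zeta,\varphi\rangle=0$, so $\supp\zeta\subset[t_1,t_2]$ and the subspace is closed.

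The fourth claim is the substantial one. The inclusion $\overline{C_c^\infty(t_1,t_2)}^{\|\cdot\|_V}\subset H_V(t_1,t_2)$ is immediate, since $C_c^\infty(t_1,t_2)\subset H_V(t_1,t_2)$ and the right-hand side is closed by the previous step. For the reverse inclusion I would approximate a given $\zeta\in H_V(t_1,t_2)$ by a two-step procedure carried out in the equivalent norm $\|\cdot\|_{H^{-(1-a)/2}}$. First, writing $m=(t_1+t_2)/2$ and $L=(t_2-t_1)/2$, I would interiorise the support by an inward dilation about $m$: for $\lambda\in(0,1)$ the dilate $\zeta_\lambda$ is supported in $[m-\lambda L,m+\lambda L]\subset\subset(t_1,t_2)$, and $\zeta_\lambda\to\zeta$ in $H^{-(1-a)/2}(\R)$ as $\lambda\to1^-$. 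Next, I would mollify: $\zeta_\lambda*\varphi_\varepsilon\to\zeta_\lambda$ in $H^{-(1-a)/2}(\R)$ as $\varepsilon\to0$, and for $\varepsilon$ small enough $\zeta_\lambda*\varphi_\varepsilon\in C_c^\infty(t_1,t_2)$. A diagonal choice $\lambda\to1$, $\varepsilon\to0$ then exhibits $\zeta$ as a $\|\cdot\|_V$-limit of $C_c^\infty(t_1,t_2)$ functions.

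I expect the main obstacle to be the continuity of dilation in $H^{-(1-a)/2}(\R)$, the mollification step being routine from $|\widehat{\varphi_\varepsilon}|\le1$ and $\widehat{\varphi_\varepsilon}\to1$ pointwise via dominated convergence on the Fourier side. This continuity is cleanest on the Fourier side: using $\widehat{\zeta_\lambda}(\omega)=\lambda\,\widehat\zeta(\lambda\omega)$ (up to an irrelevant modulation when $m\neq0$) one checks that the dilation operators are uniformly bounded on $H^{-(1-a)/2}(\R)$ for $\lambda$ near $1$, since $(1+\lambda^{-2}\xi^2)^{-(1-a)/2}\asymp(1+\xi^2)^{-(1-a)/2}$ there, and that $\zeta_\lambda\to\zeta$ for Schwartz $\zeta$; a standard density argument upgrades this to strong continuity at $\lambda=1$ on all of $H^{-(1-a)/2}(\R)$. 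The one point requiring care is that this route is valid uniformly in $a\in[0,1)$, \emph{including} the borderline exponent $(1-a)/2=1/2$ arising at $a=0$; working through dilation and mollification directly avoids the duality subtleties that otherwise appear at this endpoint.
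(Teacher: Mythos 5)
Your proposal is correct and follows essentially the same route as the paper: the Plancherel/multiplier argument for \eqref{for:Vip:on:HV} and the isometry, closedness of $\{\zeta \in \mathcal S'(\R) : \supp \zeta \subset [t_1,t_2]\}$ for the Hilbert-space claim, and the density \eqref{for:HV:densy:Ccinf} via inward dilation (uniformly bounded on the relevant Sobolev scale, with strong convergence upgraded from a dense class of nice functions) combined with mollification and a diagonal argument. The only cosmetic difference is the order of the two approximation steps (the paper mollifies compactly supported elements first, then dilates), which changes nothing of substance.
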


\begin{rem} \label{r:T2:eqs:Vast}
The operator $T$ is also examined in \cite{GarroniVanMeursPeletierScardia16}, but for different assumptions on $V$. We refer to \cite[Lem.~A.2]{GarroniVanMeursPeletierScardia16} in several steps of the proof of Corollary \ref{cor:props:HV}. 

From \eqref{for:defn:v} we observe that $\mathcal F (T^2 \zeta) = \widehat V \widehat \zeta$, which motivates us to interpret the operator $T^2$ as `convolution with $V$'. In Lemma \ref{l:convo:consy} we prove that this interpretation is consistent with the extension of the classical convolution which we introduce in \eqref{convo:Vrho}.

However, we refrain from interpreting $T$ as `convolution with $\mathcal F^{-1} v$'. The reason for this is as follows. From Lemma \ref{lem:props:V}.\ref{lem:props:V:Vhat:bounds} we obtain $v \notin L^1(\R) + L^2(\R)$, and thus we cannot apply the theory of the Fourier transform on $L^1(\R) + L^2(\R)$ to identify $\mathcal F^{-1} v$ as a function. 
\end{rem}

\begin{rem}
We note that $H_V (0,1)$ is independent of the extension of $V$ from $[-1,1]$ to $\R$. Indeed, by using \eqref{for:HV:densy:Ccinf} to approximate $\zeta \in H_V (0,1)$ by $(\varphi_n) \subset C_c^\infty (0,1)$, we observe from \eqref{for:defn:Vip} that $\| \varphi_n \|_V$ is independent of the extension of $V$ for all $n$, and hence $\| \zeta \|_V$ is also independent of the extension of $V$.
\end{rem}

\begin{proof}[Proof of Corollary \ref{cor:props:HV}]
We start by proving the asserted properties of $T$. By Proposition \ref{prop:HV} the Fourier transform is well-defined on $H_V(\R)$. We recall from \cite[Lem.~A.2]{GarroniVanMeursPeletierScardia16} that $T\zeta$ is real-valued for $\zeta \in H_V (\R)$ (this follows from \eqref{for:defn:v} by the Hermitian symmetry of the Fourier transform and $v$ being real-valued and even). It is readily seen that $T$ is self-adjoint in $L^2(\R)$ by computing it in Fourier space.

By \eqref{for:Vnorm:Fversion} and Proposition \ref{prop:HV} we characterise
\begin{equation*}
  H_V(\R) 
  \cong \bigg\{ \zeta \in \mathcal S' (\R) : \| \zeta \|_V^2 = \int_\R \widehat V \big| \widehat \zeta \big|^2 < \infty \bigg\}.
\end{equation*}
Hence, for any $\zeta \in H_V(\R)$, $\widehat \zeta \in L^2 (\widehat V)$ (weighted $L^2$-space) can be treated as a complex-valued function. Moreover
\begin{equation*}
  \| \zeta \|_V^2
  = \int_\R \widehat V \big| \widehat{\zeta} \big|^2 
  = \int_\R \big| v \widehat \zeta \big|^2
  = \int_\R \big| \widehat{ T \zeta } \big|
  = \| T \zeta \|_{L^2(\R)}^2,
\end{equation*}
which shows that $T$ is isometric from $H_V(\R)$ to $L^2(\R)$. Thus, the right-hand side of \eqref{for:Vip:on:HV} is well-defined, and from Lemma \ref{lem:props:V}.\ref{lem:props:V:Vhat:bounds} we easily see that $T^{-1} f = \mathcal F^{-1} (\widehat f / v)$.

Next we show that $H_V(t_1, t_2)$ is a Hilbert space by showing that it is a closed subspace of $H_V(\R)$. It is trivial that $H_V(t_1, t_2)$ is a subspace, and closedness follows from
\begin{equation*}
  \{ \zeta \in \mathcal S' (\R) : \supp \zeta \subset [t_1, t_2] \}
\end{equation*}
being closed in $\mathcal S' (\R)$, and $\mathcal S' (\R) \supset H^{- (1-a)/2} (\R) \supset H_V(t_1, t_2)$.

Next we prove \eqref{for:HV:densy:Ccinf} by using standard approximation arguments. Without loss of generality we set $(t_1, t_2) = (0,1)$. Let 
\begin{equation*}
  H_{V,c}(0,1) := \{ \zeta \in H_V (0,1) : \supp \zeta \subset (0,1) \}.
\end{equation*}
We show that
\begin{subequations}
\begin{align} \label{fp:dens:Cci:HVc}
  \forall \, \zeta \in H_{V,c}(0,1)\
  \exists \, (f_n) \subset C_c^\infty(0,1)
  &: \| f_n - \zeta \|_V \xto{n \to \infty} 0, \quad \text{and}\\ \label{fp:dens:HVc:HV}
  \forall \, \zeta \in H_V(0,1) \
  \exists \, (\zeta_n) \subset H_{V,c}(0,1)
  &: \| \zeta_n - \zeta \|_V \xto{n \to \infty} 0, 
\end{align}
\end{subequations}
from which \eqref{for:HV:densy:Ccinf} follows by a diagonal argument.

To prove \eqref{fp:dens:Cci:HVc}, we set $\eta_n$ as the usual mollifier, and take $f_n := \eta_n * \zeta$. Since $\zeta$ is a distribution with compact support in $(0,1)$, it follows from basic theory on distributions and the Fourier transform (see, e.g., \cite[Thm.~2.7 and Lem.~2.10]{RichardsYoun07}) that $f_n \in C_c^\infty(0,1)$ for all $n \in \N$ large enough. Moreover, $\widehat{\eta_n} \to 1$ as $n \to \infty$ uniformly on bounded sets, and $\sup_{\omega \in \R} | \widehat{\eta_n} (\omega) | = \sup_{\omega \in \R} | \widehat{\eta_1} (\omega) | < \infty$. Using these properties, we compute for any $R > 0$
\begin{align} \notag
  &\| f_n - \zeta \|_V^2
  = \int_{\R} \widehat V \Big| \widehat{\eta_n * \zeta} - \widehat \zeta \Big|^2
  = \int_{\R} \widehat V \big| \widehat{\eta_n} - 1 \big|^2 \big| \widehat \zeta \big|^2 \\\notag
  &\leq \Big( \max_{[-R,R]} \big| \widehat{\eta_n} - 1 \big|^2 \Big) \int_{-R}^R \widehat V \big| \widehat \zeta \big|^2 + \sup_{\omega \in \R} \big( \widehat{\eta_n} (\omega) - 1 \big)^2 \int_{[-R,R]^c} \widehat V \big| \widehat \zeta \big|^2 \\\label{fp:dens:Cci:HVc:est}
  &\leq \Big( \max_{[-R,R]} \big| \widehat{\eta_n} - 1 \big|^2 \Big) \| \zeta \|_V^2 + C \int_{[-R,R]^c} \widehat V \big| \widehat \zeta \big|^2.
\end{align}
Since $\zeta \in H_{V,c}(0,1) \subset H_V(\R)$, it holds that $\widehat V \big| \widehat \zeta \big|^2 \in L^1(\R)$, and thus the second term in the right-hand side of \eqref{fp:dens:Cci:HVc:est} converges to $0$ as $R \to \infty$. We conclude \eqref{fp:dens:Cci:HVc} by first passing to the limit $n \to \infty$ in \eqref{fp:dens:Cci:HVc:est}, and then $R \to \infty$.

In preparation for proving \eqref{fp:dens:HVc:HV}, we introduce the sequence of dilation operators $\tau_\varepsilon : H_V(\R) \to H_V(\R)$ parametrised by $0 \leq \varepsilon < \tfrac12$ and given by
\begin{equation*} 
  \tau_\varepsilon \zeta (\omega) 
  = \mathcal F^{-1} \big( (1 - 2 \varepsilon) e^{- 2 \pi i \varepsilon \omega } \widehat \zeta \big( (1 - 2 \varepsilon) \omega \big) \big),
\end{equation*}
where we have used the characterisation $\widehat \zeta \in L^2(\widehat V)$. By construction, 
\begin{equation} \label{fp:tau:dil}
  \supp \zeta \subset [0,1]
  \quad \Longrightarrow \quad 
  \supp( \tau_\varepsilon \zeta ) \subset [\varepsilon, 1 - \varepsilon],  
\end{equation} 
and, for $f \in C_c (\R)$, it is easy to see that
\begin{equation} \label{fp:tau:conv}
  \| \tau_\varepsilon f - f \|_{L^2(\R)}^2 
  \xto{ \varepsilon \to 0 } 0.
\end{equation}
We claim that $\tau_\varepsilon$ is a bounded linear operators from $H^s (\R)$ to itself with operator norm bounded by $1$ for all $0 \leq s \leq \tfrac12$. Indeed
\begin{align*} \notag
  \big\| \tau_\varepsilon f \big\|_{H^s (\R)}^2
  &= \int_{\R} (1 + \omega^2)^s \Big| \widehat{ \tau_\varepsilon f } \Big|^2 \, d\omega
  = (1 - 2 \varepsilon)^2 \int_{\R} (1 + \omega^2)^s \Big| \widehat f \big( (1 - 2 \varepsilon) \omega \big) \Big|^2 \, d\omega \\ \notag
  &= (1 - 2 \varepsilon) \int_{\R} \Big( 1 + \frac{k^2}{(1 - 2 \varepsilon )^2} \Big)^s \big| \widehat f ( k ) \big|^2 \, dk \\ 
  &\leq (1 - 2 \varepsilon)^{1 - 2s} \int_{\R} (1 + k^2)^s \big| \widehat f ( k ) \big|^2 \, dk
  \leq \| f \|_{H^s (\R)}^2,
\end{align*}
which proves the claim. Then, by Proposition \ref{prop:HV}, we conclude that the norm of $\tau_\varepsilon$ as an operators from $H_V (\R)$ to itself is bounded by some $\varepsilon$-independent constant.

Finally, we prove \eqref{fp:dens:HVc:HV}. Let $\zeta \in H_V (0,1)$, and set $\zeta_\varepsilon := \tau_\varepsilon \zeta$ for $0 < \varepsilon < \tfrac12$. By \eqref{fp:tau:dil}, $\zeta_\varepsilon \in H_{V,c} (0,1)$ for all $\varepsilon > 0$. Regarding the convergence, we take any $\delta > 0$, and observe from $\widehat \zeta \in L^2(\widehat V)$ that there exists $\varphi \in \mathcal S (\R)$ with $\widehat \varphi \in C_c^\infty(\R)$ such that 
\begin{equation*}
  \| \zeta - \varphi \|_V^2 
  = \int_{\R} \widehat V \big| \widehat \zeta - \widehat \varphi \big|^2 
  < \delta^2.
\end{equation*}
Then, we estimate
\begin{equation*}
  \| \tau_\varepsilon \zeta - \zeta \|_V
  \leq \| \tau_\varepsilon (\zeta - \varphi) \|_V + \| \tau_\varepsilon \varphi - \varphi \|_V + \| \varphi - \zeta \|_V
  \leq C\| \varphi - \zeta \|_V + \| \tau_\varepsilon \varphi - \varphi \|_V.
\end{equation*}
The first term is bounded by $C \delta$. For the second term, we use $\| \cdot \|_V \leq C \| \cdot \|_{L^2(\R)}$ and \eqref{fp:tau:conv} to show that it is abitrarily small in $\varepsilon$. Since $\delta > 0$ is abitrary, we conclude \eqref{fp:dens:HVc:HV}, which completes the proof of \eqref{for:HV:densy:Ccinf}.
\end{proof}

Using \eqref{for:defn:HV}, we define the integral of $\rho \in H_V (0,1)$ as
\begin{equation} \label{int01rho}
  \int_0^1 \rho 
  := \langle \rho, \varphi \rangle, 
\end{equation}
where $\varphi \in C_c^\infty (\R)$ is any test function which satisfies $\varphi |_{(0,1)} = 1$. Since $\supp \rho \subset [0,1]$, this definition does not dependent on $\varphi$.

\begin{lem} \label{l:convo:consy}
For any $\rho, \mu \in H_V(0,1) \cap \mathcal P ([0,1])$ it holds that
\begin{gather} \label{Vrho:is:Vrhot}
  V * \rho = T^2 \rho
  \quad \text{a.e.~on } (0,1) \text{, and} \\ \label{for:Vnorm:on:HVnP}
  (\mu, \rho)_V
  = \int_0^1 (V * \mu) \, d\rho.
\end{gather}

\end{lem}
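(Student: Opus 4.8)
The plan is to treat the two identities separately, deriving \eqref{Vrho:is:Vrhot} first and then using it to establish \eqref{for:Vnorm:on:HVnP}.

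For \eqref{Vrho:is:Vrhot}, I would argue entirely in Fourier space. Since $V\in L^1(\R)$ and $\rho\in\mathcal P([0,1])$ is a finite measure, Young's inequality gives $V*\rho\in L^1(\R)$, and the convolution theorem yields $\widehat{V*\rho}=\widehat V\,\widehat\rho$ as a product of the continuous functions $\widehat V$ and $\widehat\rho$. On the other hand, Remark \ref{r:T2:eqs:Vast} records that $\mathcal F(T^2\rho)=\widehat V\,\widehat\rho$, and the bound in Lemma \ref{lem:props:V}.\ref{lem:props:V:Vhat:bounds} together with $\rho\in H_V(\R)$ shows $\int_\R\widehat V^2|\widehat\rho|^2\le\|\widehat V\|_\infty\|\rho\|_V^2<\infty$, so that $T^2\rho$ is a genuine function in $L^2(\R)$. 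Thus $V*\rho$ and $T^2\rho$ are tempered distributions with identical Fourier transforms, hence coincide as distributions and therefore a.e.\ on $\R$; restricting to $(0,1)$ gives \eqref{Vrho:is:Vrhot}.

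For \eqref{for:Vnorm:on:HVnP}, I would first reduce to the case of a smooth second argument. Using \eqref{for:HV:densy:Ccinf} and the construction in the proof of Corollary \ref{cor:props:HV} (mollified dilations), choose probability densities $\rho_n\in C_c^\infty(0,1)$ with $\|\rho_n-\rho\|_V\to0$ and, simultaneously, $\rho_n\,\mathcal L\weakto\rho$ weakly-$*$. Because $T$ is a self-adjoint isometry and $T\mu,\rho_n\in L^2(\R)$, the identity \eqref{for:Vip:on:HV} combined with \eqref{Vrho:is:Vrhot} gives
\begin{equation*}
  (\mu,\rho_n)_V=\int_\R(T\mu)(T\rho_n)=\int_\R(T^2\mu)\rho_n=\int_0^1(V*\mu)\rho_n\,dt,
\end{equation*}
where the last step uses $\supp\rho_n\subset(0,1)$ and $(V*\mu)\rho_n\in L^1(\R)$. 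Letting $n\to\infty$, the left-hand side converges to $(\mu,\rho)_V$ by continuity of the inner product, so it remains to show that the right-hand side converges to $\int_0^1(V*\mu)\,d\rho$.

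This last passage to the limit is the main obstacle, since $V*\mu$ is only known to be nonnegative and lower semicontinuous (and to lie in $L^1\cap L^2$), so weak-$*$ convergence cannot be applied to it directly. I would resolve this with the monotone approximants $V_k\uparrow V$ from Lemma \ref{lem:props:V}.\ref{lem:props:V:V_k}. For each fixed $k$ the function $V_k*\mu$ is bounded and continuous, so weak-$*$ convergence gives $\int_0^1(V_k*\mu)\rho_n\,dt\to\int_0^1(V_k*\mu)\,d\rho=:I_k$, while monotone convergence yields $I_k\uparrow\int_0^1(V*\mu)\,d\rho=:I$. From $V_k\le V$ and nonnegativity one obtains $I_k\le(\mu,\rho)_V$ for every $k$, hence $I\le(\mu,\rho)_V$. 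For the reverse inequality I would apply the weak-$*$ lower semicontinuity of $\nu\mapsto\int\phi\,d\nu$ for the nonnegative lower semicontinuous integrand $\phi:=(V-V_k)*\mu$: taking $n\to\infty$ in $\int_0^1((V-V_k)*\mu)\rho_n\,dt=(\mu,\rho_n)_V-\int_0^1(V_k*\mu)\rho_n\,dt$ gives $(\mu,\rho)_V-I_k\ge I-I_k$, and letting $k\to\infty$ yields $(\mu,\rho)_V\ge I$. Combining both inequalities gives $I=(\mu,\rho)_V$, which is \eqref{for:Vnorm:on:HVnP}. Throughout, the absence of atoms of $\mu$ (forced by $\mu\in H_V(0,1)$, since $\delta\notin H^{-(1-a)/2}(\R)$) ensures that $V*\mu$ is finite a.e.\ and that the product measure $\mu\otimes\rho$ carries no mass on the diagonal.
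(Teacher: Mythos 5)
Your proof is correct, but it takes a genuinely different route from the paper's. The paper regularises all three objects simultaneously --- it truncates $V$ to $V_k$ (Lemma \ref{lem:props:V}.\ref{lem:props:V:V_k}) and mollifies both measures to $\rho_\varepsilon$, $\mu_\delta$ --- proves the classical identities \eqref{pf:regzd:version} for the regularised objects, and then passes to the three limits in a fixed order, controlling the Fourier side via $0 \leq \widehat{V_k} \leq \widehat V$ together with dominated and monotone convergence. You instead obtain \eqref{Vrho:is:Vrhot} in one stroke: since the extended $V$ lies in $L^1(\R)$, Young's inequality and the convolution theorem for the convolution of an $L^1$-function with a finite measure give $\widehat{V*\rho} = \widehat V \, \widehat\rho$, which coincides with $\mathcal F(T^2\rho)$, and injectivity of $\mathcal F$ on tempered distributions (both objects being locally integrable functions, as your $L^2$-bound on $T^2\rho$ shows) finishes the argument with no regularisation at all --- cleaner than the paper's treatment of this half. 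For \eqref{for:Vnorm:on:HVnP} you approximate only $\rho$, by smooth probability densities, and replace the paper's mollification of $\mu$ by a two-inequality argument: the truncations $V_k$ plus two applications of monotone convergence give $\int_0^1 (V*\mu)\,d\rho \leq (\mu,\rho)_V$, and portmanteau lower semicontinuity for the nonnegative lower semicontinuous integrand gives the reverse. What each approach buys: the paper's symmetric regularisation keeps every manipulation at the level of classical convolutions of continuous objects, while yours is shorter and isolates precisely where lower semicontinuity, monotonicity, and the absence of atoms (needed so that $V_k(t-s) \uparrow V(t-s)$ holds $\mu$-a.e.\ for every $t$) actually enter. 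Two small remarks. First, your detour through $\phi = (V - V_k)*\mu$ is unnecessary: the same Fatou argument shows $V*\mu$ itself is lower semicontinuous and nonnegative, so portmanteau applied directly to it yields $(\mu,\rho)_V = \lim_n \int_0^1 (V*\mu)\rho_n \geq \int_0^1 (V*\mu)\,d\rho$ at once. Second, the approximants built in the proof of Corollary \ref{cor:props:HV} have total mass $1 - 2\varepsilon$ (the dilation $\tau_\varepsilon$ does not preserve mass), so to get genuine probability densities you must renormalise --- harmless, since both convergences survive; and in fact the weak-$*$ convergence you request separately is automatic from the $H_V$-convergence, because all the measures are probability measures supported in the compact set $[0,1]$, so convergence against Schwartz test functions upgrades to convergence against $C([0,1])$ by uniform approximation.
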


\begin{rem} While any $\rho \in H_V (0,1) \cap \mathcal P([0,1])$ has no atoms,  we cannot exclude Cantor parts. Indeed, \cite[Thm.~4.13]{Falconer03} guarantees for all $0 \leq a < 1$ the existence of a Cantor measure $\rho \in \mathcal P([0,1])$ with $\| \rho \|_{H^{- (1-a)/2}(\R)} < \infty$. Hence, we cannot treat the elements of $H_V (0,1) \cap \mathcal P([0,1])$ as functions.
\end{rem}

\begin{proof}[Proof of Lemma \ref{l:convo:consy}]
Let $\rho, \mu \in H_V(0,1) \cap \mathcal P ([0,1])$ be arbitrary. We extend $\rho, \mu$ to $\R$ with zero extension, and extend $V$ to $\R$ as in \eqref{for:V:props:R}. We first introduce regularisations of $V$, $\rho$ and $\mu$. Let $V_k$ as given by Lemma \ref{lem:props:V}.\ref{lem:props:V:V_k}, for any $k \in \N$, and let $\rho_\varepsilon := \eta_\varepsilon * \rho \in \mathcal S (\R) \cap \mathcal P(\R)$ and $\mu_\delta := \eta_\delta * \mu \in \mathcal S (\R) \cap \mathcal P(\R)$, where $\eta_\varepsilon$ is the usual mollifier. It is easy to verify that
\begin{equation*}
   \int_{\R} \varphi \, d\rho_\varepsilon 
   \xto{\varepsilon \to 0} \int_{\R} \varphi \, d\rho
   \quad \text{for all } \varphi \in C_b(\R) 
   \text{, and}
   \quad v \widehat{\rho_\varepsilon}
   \xto{\varepsilon \to 0} v \widehat{\rho}
   \quad \text{in } L^2(\R),
 \end{equation*} 
 where $v$ is defined in \eqref{for:defn:v}. $\mu_\delta$ satisfies analogous convergence properties. As a consequence of Lemma \ref{lem:props:V}.\ref{lem:props:V:V_k} and \eqref{for:V:props:R}, we obtain that $V_k \in C_c(\R)$. Moreover, since both $V_k$ and $V - V_k$ are convex on $(0,\infty)$ and even on $\R$, we obtain from \eqref{Vhat:nonneg} that $0 \leq \widehat{ V_k } \leq \widehat V$. Moreover, applying the Dominated Convergence Theorem, we obtain
 \begin{equation} \label{VmVkhat:conv}
   \big\| \widehat V - \widehat{ V_k } \big\|_\infty
   = \sup_{\omega \in \R} \bigg| \int_\R (V - V_k)(t) \, \cos(2\pi t \omega) \, dt \bigg|
   = \int_\R (V - V_k) 
   \xto{k \to \infty} 0.
 \end{equation}
Since $V_k$, $\rho_\varepsilon$ and $\mu_\delta$ are all continuous and integrable, we obtain from the classical definition of convolution that
\begin{equation} \label{pf:regzd:version}
  V_k * \rho_\varepsilon = \mathcal F^{-1}( \widehat{V_k} \widehat{\rho_\varepsilon})
  \quad \text{on } \R \text{, and} \quad
  \int_{\R} (V_k * \mu_\delta) \rho_\varepsilon
  = \int_{\R} \widehat{ V_k } \widehat{ \mu_\delta } \overline{ \widehat{ \rho_\varepsilon }}
\end{equation}
where the bar denotes complex conjugation.

Next we pass to the limit in \eqref{pf:regzd:version}; first $\delta \to 0$, then $\varepsilon \to 0$, and finally $k \to \infty$. Regarding $V_k * \rho_\varepsilon$, we note that for all $t \in \R$, the mapping $s \mapsto V_k(t-s)$ is bounded and continuous. Hence,
\begin{equation} \label{pf:Vkrhoe:ptws}
  (V_k * \rho_\varepsilon) (t)
  = \int_{\R} V_k(t-s) \, d\rho_\varepsilon(s)
  \xto{\varepsilon \to 0} \int_{\R} V_k(t-s) \, d\rho(s)
  = (V_k * \rho) (t)
  \quad \text{for all } t \in \R.
\end{equation}
Since $V_k * \rho_\varepsilon$ is bounded, we obtain form the Dominated Convergence Theorem that $V_k * \rho_\varepsilon \to V_k * \rho$ in $L^p(0,1)$ as $\varepsilon \to 0$ for any $1 \leq p < \infty$.
Fixing some $p \in [1, 1/a)$, it follows from $V * \rho \in L^p(0,1)$ and the Monotone Convergence Theorem that
\begin{equation*}
  V_k * \rho
  \xto{k \to \infty} V * \rho
  \quad \text{in } L^p(0,1).
\end{equation*}
Regarding $\mathcal F^{-1}( \widehat{V_k} \widehat{\rho_\varepsilon})$, we take any test function $\varphi \in \mathcal S(\R)$, and compute
\begin{align*}
  \int_\R \mathcal F^{-1}( \widehat{V_k} \widehat{\rho_\varepsilon}) \varphi
  = \int_\R \widehat{ V_k } \widehat{ \rho_\varepsilon } \overline{ \widehat{ \varphi }}
  \xto{\varepsilon \to 0} \int_\R \widehat{ V_k } \widehat{ \rho } \overline{ \widehat{ \varphi }}.
\end{align*}
Using $0 \leq \widehat{ V_k } \leq \widehat V$ and the pointwise convergence of $\widehat{ V_k }$ to $\widehat V$, we conclude from the Dominated Convergence Theorem that
\begin{align*}
  \int_\R \widehat{ V_k } \widehat{ \rho } \overline{ \widehat{ \varphi }}
  \xto{k \to \infty}   \int_\R \widehat{ V } \widehat{ \rho } \overline{ \widehat{ \varphi }}
  = \int_\R (T^2 \rho) \varphi.
\end{align*}
Since $\varphi$ is arbitrary, we conclude by the uniqueness of limits that \eqref{Vrho:is:Vrhot} holds.

Similarly, we prove \eqref{for:Vnorm:on:HVnP} starting from \eqref{pf:regzd:version}. From the same argument as in \eqref{pf:Vkrhoe:ptws} we obtain from the Dominated Convergence Theorem that 
\begin{equation*}
  \int_{\R} (V_k * \mu_\delta) \rho_\varepsilon
  \xto{\delta \to 0} \int_{\R} (V_k * \mu) \rho_\varepsilon.
\end{equation*}
Since $V_k * \mu \in C_b(\R)$ and $\rho$ has no atoms, we further obtain
\begin{equation*}
  \int_{\R} (V_k * \mu) \rho_\varepsilon
  \xto{\varepsilon \to 0} \int_0^1 (V_k * \mu) \, d\rho.
\end{equation*}
Since $0 \leq V_k \nearrow V$ as $k \to \infty$, we conclude by the Monotone Convergence Theorem that
\begin{equation*}
  \int_0^1 (V_k * \mu) \, d\rho
  \xto{k \to \infty}    \int_0^1 (V * \mu) \, d\rho.
\end{equation*}
Regarding the right-hand side of \eqref{pf:regzd:version}, we rewrite it as
\begin{equation*}
  \int_{\R} \widehat{ V_k } \widehat{ \mu_\delta } \overline{ \widehat{ \rho_\varepsilon }}
  = \int_{\R} \frac{ \widehat{ V_k } }{ \widehat V } \big( v \widehat{ \mu_\delta } \big) \big( \overline{ v \widehat{ \rho_\varepsilon }} \big)
\end{equation*}
Since $0 \leq \widehat{ V_k } / \widehat V \leq 1$, we obtain
\begin{equation*}
  \int_{\R} \frac{ \widehat{ V_k } }{ \widehat V } \big( v \widehat{ \mu_\delta } \big) \big(  \overline{ v \widehat{ \rho_\varepsilon }} \big)
  \xto{ \substack{ \delta \to 0 \\ \varepsilon \to 0 } }
  \int_{\R} \widehat{ V_k } \widehat{ \mu } \overline{ \widehat{ \rho }}.
\end{equation*}
Since $\widehat V > 0$ and \eqref{VmVkhat:conv} imply that  $\| (\widehat V - \widehat{ V_k }) / \widehat V \|_{L^\infty (K)} \to 0$ as $k \to \infty$ for any compact $K \subset \R$, we obtain
\begin{equation*}
  \int_{\R} \widehat{ V_k } \widehat{ \mu } \overline{ \widehat{ \rho }}
  = \int_{\R} \bigg( 1 - \frac{\widehat V - \widehat{ V_k }}{ \widehat V } \bigg) \widehat{ V } \widehat{ \mu } \overline{ \widehat{ \rho }}
  \xto{k \to \infty} \int_{\R} \widehat{ V } \widehat{ \mu } \overline{ \widehat{ \rho }}
  = (\mu, \rho)_V. \qedhere
\end{equation*}
\end{proof}

Lemma \ref{l:convo:consy} motivates the following notational convention:

\begin{defn}[Convolution with $V$] \label{d:convo}
Let $\rho \in H_V (0,1) \cup \mathcal P([0,1])$. If $\rho \in \mathcal P([0,1])$, then we interpret $V * \rho$ as the lower semi-continuous function defined in \eqref{convo:Vrho}. If $\rho \in H_V (0,1)$, then $V * \rho := T^2 \rho \in L^2(0,1)$.
\end{defn}

We end this section with two further properties of the convolution with $V$:

\begin{lem} \label{lem:Vastxi:zero:means:xi:zero}
If $\zeta \in H_V(0,1)$ satisfies $V * \zeta = 0$ a.e.~on $(0,1)$, then $\zeta = 0$.
\end{lem}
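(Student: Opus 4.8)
The plan is to exploit the density \eqref{for:HV:densy:Ccinf} of $C_c^\infty(0,1)$ in $H_V(0,1)$ together with the self-adjointness of $T$ from Corollary \ref{cor:props:HV}, in order to show that $\|\zeta\|_V = 0$; positivity of the inner product (Lemma \ref{l:Vip:L2}) then forces $\zeta = 0$.

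First I would invoke \eqref{for:HV:densy:Ccinf} to choose a sequence $(\varphi_n) \subset C_c^\infty(0,1)$ with $\|\varphi_n - \zeta\|_V \to 0$. Because the inner product $(\cdot,\cdot)_V$ is continuous with respect to $\|\cdot\|_V$, it suffices to prove that $(\zeta, \varphi_n)_V = 0$ for every $n$: letting $n \to \infty$ then gives $\|\zeta\|_V^2 = (\zeta,\zeta)_V = \lim_n (\zeta,\varphi_n)_V = 0$.

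Second, I would rewrite $(\zeta, \varphi_n)_V$ so that the hypothesis $V*\zeta = 0$ on $(0,1)$ becomes usable. Using the representation \eqref{for:Vip:on:HV} and the self-adjointness of $T$ in $L^2(\R)$ (Corollary \ref{cor:props:HV}), followed by Definition \ref{d:convo}, one obtains
\[
  (\zeta, \varphi_n)_V = \int_\R (T\zeta)(T\varphi_n) = \int_\R (T^2 \zeta) \varphi_n = \int_\R (V*\zeta)\,\varphi_n .
\]
In Fourier variables both the first and the last integral equal $\int_\R \widehat V\,\widehat\zeta\,\overline{\widehat{\varphi_n}}$, which makes the identity transparent and simultaneously confirms that $T^2\zeta = \mathcal F^{-1}(\widehat V\widehat\zeta)$ is a genuine $L^2(\R)$ function. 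Since $\supp \varphi_n \subset (0,1)$ and $V*\zeta = 0$ a.e.~on $(0,1)$, the rightmost integral localises to $(0,1)$ and vanishes, so $(\zeta,\varphi_n)_V = 0$ as required.

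The only point needing care is the transfer of $T$ in the displayed identity and the localisation to $(0,1)$: one must verify that $T^2\zeta \in L^2(\R)$ (so that the self-adjointness of $T$ and Plancherel apply) and that this object coincides with $V*\zeta$ from Definition \ref{d:convo}. Both follow from Lemma \ref{lem:props:V}.\ref{lem:props:V:Vhat:bounds}, since the bound $\widehat V \leq C(1+\omega^2)^{-(1-a)/2}$ yields $\int_\R |\widehat V\widehat\zeta|^2 \leq C\int_\R \widehat V|\widehat\zeta|^2 = C\|\zeta\|_V^2 < \infty$. This is the main (and rather mild) obstacle; the remaining ingredients are merely the continuity of the inner product and the positivity of $\|\cdot\|_V$ established in Lemma \ref{l:Vip:L2}.
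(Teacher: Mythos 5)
Your proof is correct and is essentially identical to the paper's: the paper also approximates $\zeta$ by $(\varphi_n)\subset C_c^\infty(0,1)$ via \eqref{for:HV:densy:Ccinf}, uses $V*\zeta = T^2\zeta$ (Definition \ref{d:convo}) and the self-adjointness of $T$ to get $0 = \int_0^1 (V*\zeta)\varphi_n = (T\zeta, T\varphi_n)_{L^2(\R)}$, and passes to the limit to conclude $\|\zeta\|_V^2 = 0$. Your extra verification that $T^2\zeta \in L^2(\R)$ via the upper bound in Lemma \ref{lem:props:V}.\ref{lem:props:V:Vhat:bounds} is a harmless (and correct) addition that the paper leaves implicit.
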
 

\begin{proof}
By \eqref{for:HV:densy:Ccinf} there exists $(\varphi_n) \subset C_c^\infty (0,1)$ such that $\varphi_n \to \zeta$ in $H_V(0,1)$ as $n \to \infty$. From Corollary \ref{cor:props:HV}, we then obtain that $T \varphi_n \to T\zeta$ in $L^2 (\R)$. Using these observations, we compute
\begin{equation*}
  0
  = \int_0^1 (V * \zeta) \varphi_n
  = (T^2 \zeta, \varphi_n)_{L^2(\R)}
  = (T \zeta, T \varphi_n)_{L^2(\R)}
  \xto{n \to \infty} (T \zeta, T\zeta)_{L^2(\R)}
  = \| \zeta \|_V^2. \qedhere
\end{equation*}
\end{proof}

Let $\mathcal M ([0,1])$ be the space of finite, signed Borel measures on $[0,1]$.

\begin{lem} \label{lem:convo:with:psi}
It holds for all $\nu \in H_V (0,1) \cap \mathcal M([0,1])$ that 
\begin{equation*}
  \left\{ \begin{aligned}
    \Vreg * \nu &\in W^{2,1} (0,1)
    &&\text{if } 0 < a < 1, \\
    \Vreg * \nu &\in W^{2, \plog} (0,1)
    &&\text{if } a = 0.    
  \end{aligned} \right.
\end{equation*}

\end{lem}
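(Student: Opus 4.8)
The plan is to show that the two claimed weak derivatives of $\Vreg * \nu$ on $(0,1)$ are obtained by transferring derivatives onto the smooth factor $\Vreg$, i.e.\ that $(\Vreg * \nu)' = \Vreg' * \nu$ and $(\Vreg * \nu)'' = \Vreg'' * \nu$ in the weak sense on $(0,1)$, and then to read off the integrability of the right-hand sides. The regularity inputs come from Lemma \ref{lem:props:V}.\ref{lem:props:V:Vreg:regy}: in both cases $\Vreg \in C^1(\R)$ with $\Vreg'$ absolutely continuous and $\Vreg'' \in L^1(\R)$, so $\Vreg \in W^{2,1}_{\mathrm{loc}}(\R)$; moreover $\Vreg' \in L^1(\R)$ when $0<a<1$, whereas $\Vreg', \Vreg'' \in L^{\plog}(\R)$ when $a=0$. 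Of $\nu$ I use only that it is a finite signed measure, i.e.\ that $|\nu|([0,1]) < \infty$.

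First I would record the integrability of the candidate derivatives. Since $\Vreg$ is continuous, for $t \in (0,1)$ and $s \in [0,1]$ the argument $t-s$ lies in the bounded set $(-1,1)$, so $|(\Vreg*\nu)(t)| \le \|\Vreg\|_{L^\infty(-1,1)}\,|\nu|([0,1])$, whence $\Vreg * \nu$ is bounded on $(0,1)$ and thus lies in both $L^1(0,1)$ and $L^{\plog}(0,1)$. For the derivatives I apply Young's inequality for the convolution of an $L^p$ function with a finite measure, $\|g * \nu\|_{L^p(\R)} \le \|g\|_{L^p(\R)}\,|\nu|([0,1])$. Taking $g = \Vreg'$ and $g = \Vreg''$ yields $\Vreg'*\nu,\ \Vreg''*\nu \in L^1(\R)$ when $0<a<1$ and $\in L^{\plog}(\R)$ when $a=0$; restricting to $(0,1)$ gives functions in the required spaces.

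The main point is to justify the differentiation identities. For $\phi \in C_c^\infty(0,1)$ I would compute $\int_\R (\Vreg * \nu)\,\phi''$ by Fubini, exchanging the $dt$ integral with the $d\nu(s)$ integral; this is legitimate because on $\supp \phi''$ the integrand $\Vreg(t-s)\phi''(t)$ is bounded and $|\nu|$ is finite. For each fixed $s$, integrating by parts twice in $t$ gives $\int_\R \Vreg(t-s)\phi''(t)\,dt = \int_\R \Vreg''(t-s)\phi(t)\,dt$, with no boundary terms since $\phi$ has compact support; this is valid because $\Vreg \in C^1(\R)$ with $\Vreg'$ absolutely continuous and $\Vreg'' \in L^1_{\mathrm{loc}}(\R)$. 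A second application of Fubini, justified by $\int_{\supp \phi}|\Vreg''(t-s)|\,dt \le \|\Vreg''\|_{L^1(\R)}$ uniformly in $s$, then identifies $\int_\R(\Vreg*\nu)\phi'' = \int_\R(\Vreg''*\nu)\phi$, so $\Vreg''*\nu$ is the weak second derivative of $\Vreg*\nu$ on $(0,1)$; an analogous single integration by parts gives $(\Vreg*\nu)' = \Vreg'*\nu$. Combining the three integrability statements yields $\Vreg*\nu \in W^{2,1}(0,1)$ when $0<a<1$ and $\Vreg*\nu \in W^{2,\plog}(0,1)$ when $a=0$.

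The delicate point, and the one I would check most carefully, is precisely this Fubini-plus-integration-by-parts exchange: one must avoid appealing to global integrability of $\Vreg'$ (which fails for $a=0$, where $\Vreg'(r) \sim 1/r$ at infinity) and instead use continuity of $\Vreg$ and $\Vreg'$ to control the integrands on the compact set $\supp\phi$, reserving the global $L^p$ bounds of $\Vreg'$ and $\Vreg''$ only for the final integrability of $\Vreg'*\nu$ and $\Vreg''*\nu$.
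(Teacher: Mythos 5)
Your proof is correct and follows essentially the same route as the paper: integrability of $\Vreg^{(k)} * \nu$ for $k = 0,1,2$ via the convolution inequality for an $L^p$ function against a finite measure, followed by the identification $(\Vreg * \nu)^{(k)} = \Vreg^{(k)} * \nu$ weakly on $(0,1)$ by testing against $\varphi \in C_c^\infty(0,1)$. The paper's version compresses your Fubini-plus-integration-by-parts step into a single adjointness computation $\langle \varphi, \Vreg^{(k)} * \nu \rangle = (-1)^k \langle \Vreg * \varphi^{(k)}, \nu \rangle = \langle \varphi, (\Vreg * \nu)^{(k)} \rangle$ (using that $\Vreg$ is even), which is exactly what you justify explicitly — including your correct caution that only local bounds on $\Vreg, \Vreg'$ are used in the exchange, since $\Vreg' \notin L^1(\R)$ when $a = 0$.
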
 

\begin{proof}
We set $p = 1$ if $0 < a < 1$, and $p = \plog$ if $a = 0$. Since $\nu \in \mathcal M([0,1])$ and $\Vreg^{(k)} \in L^p (-1,1)$ for $k = 0,1,2$, we obtain by the generalised convolution inequality (see, e.g., \cite[Prop.~3.9.9]{Bogachev07}) that
\begin{equation*} 
  \big\| \Vreg^{(k)} * \nu \big\|_{L^p(0,1)} 
  \leq \big\| \Vreg^{(k)} \big\|_{L^p(0,1)} |\nu| ([0,1])
  < \infty,
\end{equation*}
and thus it remains to check that $\Vreg^{(k)} * \nu = (\Vreg * \nu)^{(k)}$ a.e.~on $(0,1)$. To check this, we take any $\varphi \in C_c^\infty (0,1)$, and compute, using that $\Vreg$ is even,
\begin{multline*}
  \big\langle \varphi, \Vreg^{(k)} * \nu \big\rangle
  = (-1)^k \big\langle \Vreg^{(k)} * \varphi, \nu \big\rangle
  = (-1)^k \big\langle \Vreg * \varphi^{(k)}, \nu \big\rangle \\
  = (-1)^k \big\langle \varphi^{(k)}, \Vreg * \nu \big\rangle
  = \big\langle \varphi, (\Vreg * \nu)^{(k)} \big\rangle. 
  \qedhere
\end{multline*}
\end{proof}

\section{Regularity of the solutions to Carleman's equations}
\label{s:Carl}

Carleman \cite{Carleman22} was the first to give an explicit solution formula for the integral equation
\begin{equation} \label{for:Carleman:eqn}
  \int_0^1 V_a (t-s) u(s) \, ds
  = f (t),
  \quad \text{for a.e.~} 0 < t < 1
\end{equation}
for any $0 \leq a < 1$, where $V_a$ is defined in \eqref{Va}. The family of equations \eqref{for:Carleman:eqn} parametrised by $a$, are called \emph{Carleman's equations}. As in \eqref{soln:form:Abs}, we set ${\mathcal C}_a$ as the linear solution operator, i.e., $u = {\mathcal C}_a f$. However, no precise solution concept for $u$ is given, and the minimal requirements on the regularity of the data $f$ are not specified. Since it is not readily verified that ${\mathcal C}_a f$ indeed satisfies the integral equation, the minimal requirements on $f$ are not easily obtained. 
The aim of this section is to find sufficient requirements on $f$ for which \eqref{for:Carleman:eqn} has a unique solution $u$ in a specific function space (see Theorem \ref{thm:expl:soln:a} in Section \ref{ss:Carl:pos} and Theorem \ref{thm:expl:soln:log} in Section \ref{ss:Carl:0}). Our proof reverses the steps of the constructive solution method of \eqref{for:Carleman:eqn} in \cite[\S 2.6]{EstradaKanwal00}, and justifies all these steps for the assumed regularity on $f$. In preparation for the proof and for later use, we define in Section \ref{ss:Carl:prel} the operator ${\mathcal C}_a$ and compute explicitly ${\mathcal C}_a f$ for $f(t) = A + Bt$. 

\subsection{The linear solution operator $\mathcal C_a$}
\label{ss:Carl:prel} 

The operator ${ \mathcal C}_a$ is given by
\begin{equation} \label{def:Ca} 
  \mathcal C_a f = \left\{ \begin{aligned}
    & \frac{\Gamma(a)}{\pi^2}  \cos^2 \Big( \frac{a\pi}2 \Big) \Big( \frac1{\phi_a} S ( \phi_a D^{1-a} f ) + \pi \tan \Big( \frac{a\pi}2 \Big) D^{1-a} f \Big)
    &&\text{if } 0<a<1 \\
    &\frac1{\pi^2 \phi_0} \bigg[ S ( \phi_0 f' ) + \frac{1}{2 \log 2} \int_0^1 \frac{f(s) }{ \phi_0(s) } \, ds \bigg]
    &&\text{if } a=0,
  \end{aligned} \right.
\end{equation}
where $S$, $\phi_a$ and $D^{1-a}$ are defined in Section \ref{s:prel}.
We leave possible choices for functions $f : (0,1) \to \R$ to Theorems \ref{thm:expl:soln:a} and \ref{thm:expl:soln:log}. Here, we compute $\mathcal C_a p$ for polynomials $p$.

\begin{prop} [$\mathcal C_a$ on polynomials] \label{prop:Ca} Let $0 \leq a < 1$ and $p$ be a polynomial of degree $k \in \N$. Then, there exists a polynomial $q$ of degree $k$ such that
$$
  \mathcal C_a p = q/\phi_a.
$$
In particular, for $a > 0$, it holds that  
\begin{equation} \label{Ca:affine}
  \mathcal C_a (A + Bt) = \frac{ \cos \tfrac{a \pi}2 }{\pi \phi_a (t)} \Big( \frac Ba t + \Big[ A - \frac{1-a}{2a} B \Big] \Big).
\end{equation}
\end{prop}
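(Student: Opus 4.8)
The plan is to work directly from the definition \eqref{def:Ca} of $\mathcal C_a$ and reduce everything to the single explicit identity \eqref{S:id}, treating $0<a<1$ and $a=0$ separately. Consider first $0<a<1$ and write $p(t)=\sum_{j=0}^{k}\frac{b_j}{j!}t^j$. The polynomial fractional-derivative formula \eqref{Dat2} (with $a$ replaced by $1-a$) gives $D^{1-a}p=\sum_{j=0}^{k}\frac{b_j}{\Gamma(j+a)}\,t^{j-1+a}$; note that, unlike the classical derivative, the constant term of $p$ survives and produces the factor $t^{a-1}$. The crucial algebraic observation is that $\phi_a(t)\,t^{j-1+a}=t^j\big(\tfrac{1-t}{t}\big)^{(1-a)/2}=t^j g_0(t)$, where $g_0(t):=\big(\tfrac{1-t}{t}\big)^{(1-a)/2}$ is exactly the function on the left of \eqref{S:id}. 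Hence $\phi_a D^{1-a}p=g_0\,r$ for a polynomial $r$ of degree $k$, and the whole problem is reduced to evaluating $S(t^j g_0)$.

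Each function $t^j g_0$ lies in $L^p(0,1)$ for some $p>1$ (its only singularities, of order $(1-a)/2<1$, are integrable), so Proposition~\ref{prop:props:S} applies. Applying its part~\ref{prop:props:S:pol} repeatedly lowers the power, giving $S(t^j g_0)=t^j S(g_0)-q_{j-1}$ with $q_{j-1}$ a polynomial of degree $j-1$, while \eqref{S:id} evaluates $S(g_0)=\tfrac{\pi}{\cos(a\pi/2)}\big(1-g_0\sin(a\pi/2)\big)$. Dividing by $\phi_a$ and using $g_0/\phi_a=t^{a-1}$ splits $t^jS(g_0)/\phi_a$ into a polynomial-over-$\phi_a$ part and a multiple of $t^{j-1+a}$. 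Summing over $j$, these fractional-power contributions add up to precisely $-\pi\tan(\tfrac{a\pi}2)D^{1-a}p$, so they cancel identically against the second term in \eqref{def:Ca}; what remains is $\tfrac1{\phi_a}$ times a polynomial of degree $k$, which proves $\mathcal C_a p=q/\phi_a$. For $a=0$ the same scheme works from the $a=0$ branch of \eqref{def:Ca}: writing $\phi_0 t^j=t^{j+1}\big(\tfrac{1-t}{t}\big)^{1/2}$, using \eqref{S:id} at $a=0$ (which reads $S\big(\big(\tfrac{1-t}{t}\big)^{1/2}\big)=\pi$) together with Proposition~\ref{prop:props:S}.\ref{prop:props:S:pol} shows that $S(\phi_0 p')$ is a polynomial of degree $k$, and the remaining integral term is merely a constant.

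To obtain the explicit formula \eqref{Ca:affine}, I would specialise to $p=A+Bt$, where $\phi_a D^{1-a}p=\tfrac1{\Gamma(a)}\big(A g_0+\tfrac Ba\,t g_0\big)$, and apply Proposition~\ref{prop:props:S}.\ref{prop:props:S:pol} once to get $S(t g_0)=tS(g_0)-\int_0^1 g_0$. The one genuine computation is the constant $\int_0^1 g_0=\int_0^1 t^{-(1-a)/2}(1-t)^{(1-a)/2}\,dt$, which by the Beta integral \eqref{Zwil} equals $\Gamma(\tfrac{1+a}2)\Gamma(\tfrac{3-a}2)$; using $\Gamma(\tfrac{3-a}2)=\tfrac{1-a}2\Gamma(\tfrac{1-a}2)$ and Euler's reflection formula $\Gamma(\tfrac{1+a}2)\Gamma(\tfrac{1-a}2)=\pi/\cos(\tfrac{a\pi}2)$ gives $\int_0^1 g_0=\tfrac{(1-a)\pi}{2\cos(a\pi/2)}$. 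Substituting back, performing the same cancellation of the $t^{a-1}$ and $t^a$ terms, and collecting the prefactor $\tfrac{\Gamma(a)}{\pi^2}\cos^2(\tfrac{a\pi}2)$ yields exactly \eqref{Ca:affine}. The main obstacle is therefore not conceptual but one of bookkeeping: tracking the Gamma-function constants, verifying that the fractional-power terms cancel identically, and checking that every singular function to which $S$, \eqref{S:id} and Proposition~\ref{prop:props:S}.\ref{prop:props:S:pol} are applied genuinely lies in the $L^p$ range where those results hold.
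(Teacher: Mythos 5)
Your proposal is correct and follows essentially the same route as the paper's own proof: both reduce $\mathcal C_a$ on monomials via \eqref{Dat2} and the rewriting $\phi_a(t)\,t^{j-1+a}=t^j\big[\tfrac{1-t}t\big]^{(1-a)/2}$, apply Proposition~\ref{prop:props:S}.\ref{prop:props:S:pol} repeatedly, evaluate with \eqref{S:id} and \eqref{Zwil}, and observe the exact cancellation of the fractional-power terms against the $\pi\tan(\tfrac{a\pi}2)D^{1-a}f$ part of \eqref{def:Ca}, with the same Beta/reflection computation yielding \eqref{Ca:affine}. The only (immaterial) deviation is at $a=0$, where the paper quotes $(S\phi_0)(t)=\pi(t-\tfrac12)$ from the literature while you derive the equivalent fact from \eqref{S:id} and Proposition~\ref{prop:props:S}.\ref{prop:props:S:pol}.
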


\begin{proof}
Since $\mathcal C_a$ is linear, it suffices to compute $\mathcal C_a (t^k)$ for $k \in \N$. We start with $a = 0$. From \eqref{def:Ca} it follows that
\[
  \mathcal C_0 (t^k) = \frac k{\pi^2 \phi_0(t)} \bigg[ S ( \phi_0(t) t^{k-1} ) + \frac{1}{2 \log 2} \int_0^1 \frac{s^k }{ \phi_0(s) } \, ds \bigg].
\]
To show that $S ( \phi_0(t) t^{k-1} )$ is a polynomial of degree $k$, it suffices to apply Proposition \ref{prop:props:S}.\ref{prop:props:S:pol} $k-1$ times, and use that $(S \phi_0)(t) = \pi (t - \frac12)$ (see \cite[(11.57)]{King09I}).

Next we treat the case $0 < a < 1$. By \eqref{Dat2},
\begin{equation*}
  D^{1-a} (t^k) = t^{k-1+a} / \Gamma(k+a).
\end{equation*}
Then, applying Proposition \ref{prop:props:S}.\ref{prop:props:S:pol} $k$ times and inserting  \eqref{S:id} and \eqref{Zwil}, we compute
\begin{align*}
  \Gamma(k+a) S ( \phi_a(t) D^{1-a} (t^k) )
  &= S \bigg( t^k \Big[ \frac{1-t}t \Big]^{\tfrac{1-a}2} \bigg) \\
  &= t^k S \bigg( \Big[ \frac{1-t}t \Big]^{\tfrac{1-a}2} \bigg) - \sum_{\ell=0}^{k-1} t^\ell \int_0^1 s^{k-1-\ell} \Big[ \frac{1-s}s \Big]^{\tfrac{1-a}2} \, ds \\
  &= t^k \frac{\pi}{  \cos \tfrac{a\pi}2 } \bigg( 1 - \Big[ \frac{1-t}t \Big]^{\tfrac{1-a}2} \sin \frac{a\pi}2 \bigg) + \sum_{\ell=0}^{k-1} b_\ell t^\ell \\
  &= - \pi \tan \Big( \frac{a\pi}2 \Big) t^{k-1+a} \phi_a(t) + \sum_{\ell=0}^{k} b_\ell t^\ell,
\end{align*}
where the coefficients $b_\ell \in \R$ may depend on $a$ and $k$. Inserting this in \eqref{def:Ca}, the first term cancels out with the second term in \eqref{def:Ca}, and thus we obtain that $\mathcal C_a (t^k) = q(t)/\phi_a(t)$ for some polynomial $q$ of degree $k$. By following the track of constants carefully, we obtain from the basic properties of the $\Gamma$-function that
\begin{equation*}
  \mathcal C_a (1) = \frac{  \cos \tfrac{a\pi}2 }{\pi \phi_a(t)}
  \quad \text{and} \quad
  \mathcal C_a (t) = \frac{  \cos \tfrac{a\pi}2 }{\pi \phi_a(t)} \Big( \frac ta - \frac{1-a}{2a} \Big);
\end{equation*}
\eqref{Ca:affine} follows.
\end{proof}

\subsection{Carleman's equation for $0 < a < 1$}
\label{ss:Carl:pos}


We introduce for $1 \leq p < \infty$ the weak $L^p$ space \cite{Grafakos04} by
\begin{equation} \label{for:defn:wLp}
  wL^p(0,1)
  := \Big\{ f : (0,1) \to \R \text{ measurable} : \esssup_{y > 0} y^p \mathcal L ( \{ |f| > y \} ) < \infty \Big\},
\end{equation}
where $\{ |f| > y \} \subset (0,1)$ is the upper-level set of $|f|$. We only use the basic properties
\begin{equation*}
  L^p(0,1)
  \subset wL^p(0,1)
  \subset L^q(0,1)
  \quad \text{for all } 1 \leq q < p
\end{equation*}
and that $wL^p(0,1) \setminus L^p(0,1)$ contains functions with a $|t|^{-1/p}$-type singularity. 

In view of Section \ref{ssec:Vip}, the left-hand side in \eqref{for:Carleman:eqn} equals $(V_a * u)(t)$ when $u$ is extended to $\R$ with value $0$. It is therefore natural to consider the Hilbert space $H_{V_a}(0,1)$ as in \eqref{for:defn:HV}.

\begin{thm} [Explicit solution] \label{thm:expl:soln:a}
Let $0 < a < 1$ and $f \in W^{2,1}(0,1)$. Then \eqref{for:Carleman:eqn} has a unique solution $u \in H_{V_a} (0,1)$. The solution $u$ is given by $u = {\mathcal C}_a f$ (see \eqref{def:Ca}) and satisfies
\begin{align} \label{for:rhobar:expl:a:f:regy} 
  \phi_a u \in C^\beta ([0,1])
  \qquad \text{for any } 0 < \beta < \tfrac{1-a}2 \wedge a.
\end{align}
\end{thm}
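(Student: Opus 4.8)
The plan is to prove, in order, that $u := \mathcal C_a f$ is a well-defined function satisfying the regularity \eqref{for:rhobar:expl:a:f:regy}, that it lies in $H_{V_a}(0,1)$ and solves \eqref{for:Carleman:eqn}, and finally that it is the unique such solution. I would establish the regularity statement first, since it simultaneously yields membership in $H_{V_a}(0,1)$.

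For the regularity, set $g := D^{1-a} f$. Since $f \in W^{2,1}(0,1) \hookrightarrow C^1([0,1])$, Proposition \ref{prop:reg:Da} (applied with $1-a$ in place of $a$ and $\ell = 1$, splitting off the constant $f(0)$) gives
\[
  g = \frac{f(0)}{\Gamma(a)}\, t^{a-1} + g_1,
\]
where $g_1 = D^{1-a}(f - f(0)) \in C^a([0,1])$ satisfies $g_1(0) = 0$ and $|g_1(t)| \le C t^a$, and where $g \in L^p(0,1)$ for all $1 \le p < (1-a)^{-1}$. From \eqref{def:Ca},
\[
  \phi_a u = \frac{\Gamma(a)}{\pi^2} \cos^2\Big(\frac{a\pi}2\Big) \Big( S(\phi_a g) + \pi \tan\Big(\frac{a\pi}2\Big) \phi_a g \Big).
\]
The crucial point is a cancellation in the singular part: since $\phi_a(t)\, t^{a-1} = [(1-t)/t]^{(1-a)/2}$, the identity \eqref{S:id} gives
\[
  S\big(\phi_a\, t^{a-1}\big) + \pi \tan\Big(\frac{a\pi}2\Big) \phi_a\, t^{a-1} = \frac{\pi}{\cos(a\pi/2)},
\]
a constant, the $[(1-t)/t]^{(1-a)/2}$ terms cancelling exactly. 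The contribution of $g_1$ is then controlled near $t=0$ by Proposition \ref{prop:props:S:Hol}.\ref{prop:props:S:Hol:bdy:0} (whose hypotheses $g_1 \in C^a$, $g_1(0) = 0$ and $a < \tfrac{1+a}2$ are all met) and near $t=1$ by the analogous endpoint analysis for the finite Hilbert transform of a $\phi_a$-weighted H\"older function; since $\phi_a \in C^{(1-a)/2}$, this yields $S(\phi_a g_1), \phi_a g_1 \in C^\beta([0,1])$ for every $\beta < \tfrac{1-a}2 \wedge a$. Combining, $\phi_a u \in C^\beta([0,1])$, and the resulting $[t(1-t)]^{-(1-a)/2}$ growth of $u$ places it in $wL^{2/(1-a)}(0,1)$, hence in $H_{V_a}(0,1)$ by the embeddings behind the norm equivalence of Section \ref{ssec:Vip}.

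For existence I would reverse the constructive inversion of \cite[\S 2.6]{EstradaKanwal00}. Splitting $V_a * u$ into the two one-sided Riemann--Liouville integrals of order $1-a$ and applying $D^{1-a}$, the equation \eqref{for:Carleman:eqn} is formally equivalent to a dominant Cauchy singular integral equation of the type $\pi \tan(\tfrac{a\pi}2)\, w - S w = D^{1-a} f$, which Proposition \ref{prop:IE} inverts explicitly and whose inversion, unwound with $I^{1-a} D^{1-a} = \mathrm{id}$ (Proposition \ref{prop:FDE}), reproduces $u = \mathcal C_a f$. To turn this into a rigorous verification I would first check $V_a * (\mathcal C_a f) = f$ on the dense class of polynomials, where Proposition \ref{prop:Ca} gives $\mathcal C_a p = q/\phi_a$ and the left-hand side can be computed in closed form from the identities \eqref{EK00}, \eqref{S:id} and \eqref{Zwil}; I would then pass to general $f \in W^{2,1}(0,1)$ by approximating it in $W^{2,1}$ by polynomials $p_n$, controlling $\mathcal C_a p_n \to \mathcal C_a f$ through the estimates of the previous paragraph together with $V_a * \mathcal C_a p_n = p_n \to f$.

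Uniqueness is immediate: if $u_1, u_2 \in H_{V_a}(0,1)$ both solve \eqref{for:Carleman:eqn}, then $\zeta := u_1 - u_2 \in H_{V_a}(0,1)$ satisfies $V_a * \zeta = 0$ a.e.\ on $(0,1)$, so Lemma \ref{lem:Vastxi:zero:means:xi:zero} (whose proof uses only $\widehat{V_a} > 0$) forces $\zeta = 0$. The main obstacle is the rigorous justification of the reversed construction under the weak hypothesis $f \in W^{2,1}$: the intermediate object $D^{1-a} f$ carries a genuine $t^{a-1}$ singularity at the origin that sits at the borderline of $L^{1/(1-a)}$, so each manipulation — the splitting of $V_a * u$, the interchange of $D^{1-a}$ with the singular integral, the boundary terms in the fractional integration by parts, and the passage to the limit along $p_n$ — must be controlled in function spaces adapted to this singularity rather than handled formally. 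It is precisely this borderline term that must reassemble, via the cancellation in \eqref{S:id}, into the endpoint behaviour of $f$.
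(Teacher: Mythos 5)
Your uniqueness step is exactly the paper's (Lemma \ref{lem:Vastxi:zero:means:xi:zero} applied to the difference of two solutions), and your regularity argument is the paper's in slightly different clothing: where you peel off the singular part $f(0)\,t^{a-1}/\Gamma(a)$ of $D^{1-a}f$ and cancel it explicitly against $\pi\tan(\tfrac{a\pi}2)\phi_a t^{a-1}$ via \eqref{S:id}, the paper subtracts the constant $f(0)$ from $f$ and invokes Proposition \ref{prop:Ca} for constants --- the same computation, since $\mathcal C_a(1)=\cos(\tfrac{a\pi}2)/(\pi\phi_a)$ is obtained from precisely that cancellation. For the remainder $g_1$, the paper gets $g_1\in W^{1,p}$, $p<\tfrac1{1-a}$, by Proposition \ref{prop:reg:Da}.\ref{prop:reg:Da:Sob} plus Morrey, and then handles \emph{both} endpoints at once by splitting $S(\phi_a g_1)=(S(\phi_a g_1)-\phi_a Sg_1)+\phi_a Sg_1$ and using the commutator compactness of Proposition \ref{prop:props:S:Hol}.\ref{prop:props:S:Hol:cp} together with the weighted boundedness of \ref{prop:props:S:Hol:bdd}; your route via the endpoint expansion \ref{prop:props:S:Hol:bdy:0} gives the remainder only on $[0,t_2]$, $t_2<1$, so you do need the reflected analogue at $t=1$, as you note. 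Both versions work and deliver the same exponent range $\beta<\tfrac{1-a}2\wedge a$.

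Where you genuinely diverge is existence, and there your proposal has a concrete soft spot. The paper verifies $V_a*(\mathcal C_a f)=f$ \emph{directly for every} $f\in W^{2,1}(0,1)$: it shows $\tilde u$ solves the airfoil equation $\pi\tan(\tfrac{a\pi}2)\tilde u-S\tilde u=t^{1-a}D^{1-a}f$ via Proposition \ref{prop:IE} (Widom's $L^p$ well-posedness), applies $I^{1-a}$ term by term using Proposition \ref{prop:FDE}, and then identifies the resulting bracket with $\int_t^1 u(s)|t-s|^{-a}\,ds$ through a regularised Fubini exchange (Proposition \ref{prop:props:S}.\ref{prop:props:S:Fubini}--\ref{prop:props:S:pol} with truncated kernels) and the identity \eqref{EK00}. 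Your polynomials-plus-density scheme avoids that delicate limit argument for general $f$, but the base case is not as easy as you claim: \eqref{EK00} and \eqref{S:id} are identities for the \emph{Cauchy} kernel $1/(t-s)$ and \eqref{Zwil} is a beta integral, so none of them directly evaluates the weakly singular convolution $V_a*(q/\phi_a)$. To verify $V_a*(\mathcal C_a p)=p$ even for polynomials you must still bridge from the Cauchy-kernel level to the $|t-s|^{-a}$ level --- that is, apply $I^{1-a}$ to the Cauchy identities and exchange the orders of integration, which is exactly the chain the paper carries out; for $u=q/\phi_a$ the interchanges are classical and easily justified, so the gap is fixable, but it is not a ``closed form from the listed identities.'' Your limit passage is sound provided you upgrade the regularity statements to operator bounds ($I^a:L^\infty\to C^a$, the weighted bound and commutator bound of Proposition \ref{prop:props:S:Hol}, and $V_a*\,:L^1\to L^1$ by Fubini), all of which are available; note also that the $[t(1-t)]^{-(1-a)/2}$ growth already puts $u$ in $L^2(0,1)\subset H_{V_a}(0,1)$, so no weak-$L^p$ embedding argument is needed for membership. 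In short: a viable alternative that trades the paper's one careful Fubini/limit verification for a density argument, at the cost of having to rerun a version of that same computation on polynomials anyway.
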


\begin{proof} [Proof of Theorem \ref{thm:expl:soln:a}]
First, we prove that \eqref{for:Carleman:eqn} attains at most one solution in $H_{V_a} (0,1)$. Taking any $u_1, u_2 \in H_{V_a} (0,1)$ which satisfy \eqref{for:Carleman:eqn}, we note that the difference $u := u_1 - u_2$ satisfies $V_a * u = 0$ a.e.~on $(0,1)$, and thus, by Lemma \ref{lem:Vastxi:zero:means:xi:zero}, we conclude $u = 0$.

In the remainder of the proof we show that $u := \mathcal C_a f$ satisfies \eqref{for:Carleman:eqn} and \eqref{for:rhobar:expl:a:f:regy}. First we prove \eqref{for:rhobar:expl:a:f:regy}. For constant functions $f \equiv C$ this is a direct consequence of Proposition \ref{prop:Ca}. Since $\mathcal C_a$ is linear, we can use this observation to assume that $f(0) = 0$ without loss of generality. For such $f$ we obtain from Proposition \ref{prop:reg:Da}.\ref{prop:reg:Da:Sob} that 
$$g := D^{1-a} f \in W^{1,p}(0,1) \quad \text{for any } 1 \leq p < \tfrac1{1-a}.$$  
Then, we obtain from Proposition \ref{prop:Morrey} that $g \in C^\alpha([0,1])$ for any $0 < \alpha < a$. To conclude \eqref{for:rhobar:expl:a:f:regy}, we observe from \eqref{def:Ca} that $u$ has the structure $u = C_1 g + C_2 S(\phi_a g) / {\phi_a}$ with $\phi_a \in C^{(1-a)/2} ([0,1])$. Splitting $S (\phi_a g) = (S (\phi_a g) - \phi_a Sg) + \phi_a Sg$, we obtain from Proposition \ref{prop:props:S:Hol}.\ref{prop:props:S:Hol:cp} that $(S (\phi_a g) - \phi_a Sg) \in C^\beta ([0,1])$ for any $0 < \beta < \frac{1-a}2 \wedge a$. Recalling the definition of the weighted H\"older space in \eqref{for:defn:wHolS}, we obtain by Proposition \ref{prop:props:S:Hol}.\ref{prop:props:S:Hol:bdd} and $g \in C^\beta([0,1]) \subset C_0^\beta(\phi_a)$ that $Sg \in C_0^\beta(\phi_a)$, and thus $\phi_a Sg \in C_0^\beta ([0,1])$. In conclusion
\begin{equation*}
  \phi_a u = \underbrace{ C_1 \phi_a g }_{ \in C^\beta([0,1]) } 
  	  + \underbrace{ C_2 (S (\phi_a g) - \phi_a Sg) }_{ \in C^\beta ([0,1]) } 
  	  + \underbrace{ C_2 \phi_a Sg }_{ \in C^\beta_0 ([0,1]) }
  	  \in C^\beta ([0,1])
  \quad \text{for any } 0 < \beta < \tfrac{1-a}2 \wedge a. 
\end{equation*} 

Next we prove that $u = \mathcal C_a f$ satisfies \eqref{for:Carleman:eqn} for any $f \in W^{2,1}(0,1)$. For such $f$, our proof follows and justifies the computation for the solution of \eqref{for:Carleman:eqn} outlined in \cite[\S 2.6]{EstradaKanwal00}.

First, we list several observations on the regularity of $u$. From \eqref{for:rhobar:expl:a:f:regy} we obtain
\begin{equation} \label{for:pf:thm:expl:soln:a:05}
  u \in wL^{\tfrac2{1-a}} (0,1) \cap L^\infty_{\operatorname{loc}}(0,1),
\end{equation}
where $wL^{2/(1-a)} (0,1)$ is defind in \eqref{for:defn:wLp},
\begin{equation} \label{for:pf:thm:expl:soln:a:10}
  \frac{u(t)}{t^a} 
  = \frac{(\phi_a u)(t)}{ \phi_a(t) t^a}
  \in wL^{\tfrac2{1+a}} (0,1),
\end{equation}
and, using Proposition \ref{prop:props:S:Hol}.\ref{prop:props:S:Hol:bdd},
\begin{equation} \label{for:pf:thm:expl:soln:a:11}
  S \Big( \frac{u(t)}{t^a} \Big)
  = S \Big( \frac{ (1-t)^{a} (\phi_a u)(t) }{ [t(1-t)]^{(a+1)/2} } \Big)
  \in wL^{\tfrac2{1+a}} (0,1) \cap L_{\text{loc}}^\infty (0,1).
\end{equation}
Finally, we obtain from Proposition \ref{prop:reg:Da}.\ref{prop:reg:Da:Sob} that 
\begin{equation} \label{for:pf:thm:expl:soln:a:95}
  \tilde g (t)
  := t^{1-a} D^{1-a}f(t) \in C([0,1]).
\end{equation}

Next we start the computation. From \eqref{def:Ca} we obtain
\begin{align*} 
  \tilde u(t) &:= \frac{t^{1-a} u(t)}{\Gamma(a)} 
  = \frac{t^{1-a} \mathcal C_a f(t)}{\Gamma(a)} \\
  &= \frac{ \sin(\frac{a\pi}2)  \cos(\frac{a\pi}2) }{\pi} t^{1-a} D^{1-a}f(t)
	+ \frac{ \cos^2 (\frac{a\pi}2) }{ \pi^2 } \frac{ t^{1-a} }{\phi_a(t)} S (\phi_a D^{1-a}f) (t) \\
  &= \frac{ \sin(a\pi) }{2 \pi} \tilde g (t)
	+ \frac{ \cos^2 (\frac{a\pi}2) }{ \pi^2 } \Big( \frac t{1-t} \Big)^{\tfrac{1-a}2} S \bigg( \Big( \frac{1-t}t \Big)^{\tfrac{1-a}2} \tilde g (t) \bigg).
\end{align*}
Then, we observe from Proposition \ref{prop:IE} and \eqref{for:pf:thm:expl:soln:a:95} that $\tilde u$ satisfies
\begin{equation} \label{for:pf:thm:expl:soln:a:8}
  \pi \tan (\tfrac{a\pi}2) \tilde u (t) - S \tilde u (t)
  = \tilde g (t)
  = t^{1-a} D^{1-a} f(t)
  \quad \text{for a.e.~} 0 < t < 1.
\end{equation}
Relying on \eqref{for:pf:thm:expl:soln:a:10}, we use Proposition \ref{prop:props:S}.\ref{prop:props:S:pol} to rewrite \eqref{for:pf:thm:expl:soln:a:8} as
\begin{multline} \label{for:pf:thm:expl:soln:a:7}
  D^{1-a} f(t)
  = \frac1{\Gamma(a)} \Big( \pi \tan (\tfrac{a\pi}2) u (t) - \frac1{t^{1-a}} S \big( t^{1-a} u (t) \big) \Big)  \\
  = \frac{ \pi ( 1 - \cos (a\pi) ) }{\Gamma(a) \, \sin (a\pi) } u (t) - \frac{ t^a }{ \Gamma(a) } S \Big( \frac{ u (t) }{t^a} \Big) + \frac1{ \Gamma(a) t^{1-a} } \int_0^1 \frac{ u (s) }{s^a} \, ds.
\end{multline}

We observe from \eqref{for:pf:thm:expl:soln:a:05} and \eqref{for:pf:thm:expl:soln:a:11} that all three terms in the right-hand side of \eqref{for:pf:thm:expl:soln:a:7} are in $L^1 (0,1)$. Then, we use Proposition \ref{prop:FDE} to apply $I^{1-a}$ to all four terms in both sides of \eqref{for:pf:thm:expl:soln:a:7}. This yields, using \eqref{Dat2},
\begin{equation*}
  I^{1-a} D^{1-a} f = f
  \quad \text{and} \quad
  I^{1-a} \Big( \frac1{ \Gamma(a) t^{1-a} } \Big) = 1.
\end{equation*}
Regarding the other two terms in \eqref{for:pf:thm:expl:soln:a:7}, we use \eqref{for:id:Gamma} to obtain
\begin{equation*}
  \frac{ \pi }{\Gamma(a) \, \sin (a\pi) } I^{1-a} u (t)
  = \int_0^t \frac{u(s)}{(t-s)^a} \, ds
\end{equation*}
and
\begin{equation*}
  I^{1-a} \Big( \frac{ t^a }{ \Gamma(a) } S \Big( \frac{ u (t) }{t^a} \Big) \Big)
  = \frac{\sin (a\pi) }{ \pi } \int_0^t \frac{s^a}{(t-s)^a} S \Big( \frac{ u (s) }{s^a} \Big) \, ds.
\end{equation*}
Hence, applying $I^{1-a}$ to \eqref{for:pf:thm:expl:soln:a:7} yields
\begin{multline} \label{for:pf:thm:expl:soln:a:6}
   f(t)
   = \int_0^t \frac{u(s)}{(t-s)^a} \, ds 
   + \bigg[ - \cos (a\pi) \int_0^t \frac{u(s)}{(t-s)^a} \, ds 
   - \frac{\sin (a\pi) }{ \pi } \int_0^t \frac{s^a}{(t-s)^a} S \Big( \frac{ u (s) }{s^a} \Big) \, ds \\
   + \int_0^1 \frac{ u (s) }{s^a} \, ds \bigg]
   \quad \text{for a.e.~} 0 < t < 1.
 \end{multline} 

It is left to show that the expression within brackets in \eqref{for:pf:thm:expl:soln:a:6} equals 
\begin{equation} \label{for:pf:thm:expl:soln:a:4}
  \int_t^1 \frac{u(s)}{|t-s|^a} \, ds.
\end{equation}
With this aim, we fix $0 < t < 1$ arbitrarily, and focus on the second term within these brackets. In preparation for applying Proposition \ref{prop:props:S}.\ref{prop:props:S:Fubini}, we regularise the integrand by replacing $(t-s)^{-a}$ by $|t-s|^{-a} \indicatornoacc{(0, t-\varepsilon)} (s)$ for $0 < \varepsilon < t$, which we interpret as a function of $s$ on $(0,1)$. Since $|t-s|^{-a} \indicatornoacc{(0, t-\varepsilon)} (s)$ converges in $L^p(0,1) \cap L^\infty(0, \tfrac t2)$ for any $1 \leq p < \tfrac1a$ to $|t-s|^{-a} \indicatornoacc{(0, t)} (s)$ as $\varepsilon \to 0$, we obtain from \eqref{for:pf:thm:expl:soln:a:11} that
\begin{equation} \label{for:pf:thm:expl:soln:a:55}
  \int_0^1 \frac{s^a \indicatornoacc{(0, t-\varepsilon)} (s) }{|t-s|^a} S \Big( \frac{ u (s) }{s^a} \Big) \, ds
  \xto{ \varepsilon \to 0 } \int_0^t \frac{s^a}{(t-s)^a} S \Big( \frac{ u (s) }{s^a} \Big) \, ds.
\end{equation}
Since $\big[ s \mapsto |t-s|^{-a} \indicatornoacc{(0, t-\varepsilon)} (s) \big] \in L^\infty(0,1)$ for any $\varepsilon \in (0,t)$, we can apply Proposition \ref{prop:props:S}.\ref{prop:props:S:Fubini},\ref{prop:props:S:pol} to obtain
\begin{multline} \label{for:pf:thm:expl:soln:a:5}
  \int_0^1 \frac{s^a \indicatornoacc{(0, t-\varepsilon)} (s) }{|t-s|^a} S \Big( \frac{ u (s) }{s^a} \Big) \, ds
  = -\int_0^1 \frac{ u (s) }{s^a} S \Big( \frac{s^a \indicatornoacc{(0, t-\varepsilon)} (s) }{|t-s|^a} \Big) \, ds \\
  = -\int_0^1 s^{1-a} u (s) S \Big( \frac{ \indicatornoacc{(0, t-\varepsilon)} (s) }{s^{1-a} |t-s|^a} \Big) \, ds + \bigg[ \int_0^1 \frac{ u (s) }{s^a} \, ds \bigg] \bigg[ \int_0^{t-\varepsilon} \frac1{s^{1-a} |t-s|^a} \, ds \bigg].
\end{multline} 

Next we pass to the limit $\varepsilon \to 0$ in the right-hand side of \eqref{for:pf:thm:expl:soln:a:5}. For the second term we obtain with \eqref{for:id:Gamma} and \eqref{Zwil} that
\begin{equation*}
  \int_0^{t-\varepsilon} \frac1{s^{1-a} |t-s|^a} \, ds
  \xto{ \varepsilon \to 0 } \frac\pi{ \sin (a\pi) }.
\end{equation*}
For the first term, we split the integration domain $(0,1)$ in $(0, \tfrac{1+t}2)$ and $( \tfrac{1+t}2, 1)$. On $(0, \tfrac{1+t}2)$, we obtain from \eqref{for:rhobar:expl:a:f:regy} that $s^{1-a} u (s)$ is bounded, and from Proposition \ref{prop:props:S}.\ref{prop:props:S:BLO} that the other term in the integrand converges in $L^p (0, \tfrac{1+t}2)$ for any $1 \leq p < \tfrac1a \wedge \tfrac1{1-a}$. On $( \tfrac{1+t}2, 1)$, we obtain that
\begin{equation*}
  S \Big( \frac{ \indicatornoacc{(0, t-\varepsilon)} (s) }{s^{1-a} |t-s|^a} \Big)
  = \int_0^{t-\varepsilon} \frac{ r^{-(1-a)} (t-r)^{-a} }{ s - r } \, dr
  \xto{ \varepsilon \to 0 } S \Big( \frac{ \indicatornoacc{(0, t)} (s) }{s^{1-a} |t-s|^a} \Big) 
  \quad \text{for } \frac{1+t}2 < s < 1
\end{equation*}
is a regular integral, where the convergence is uniformly in $s \in ( \tfrac{1+t}2, 1)$. Using \eqref{EK00}, we further rewrite
\begin{equation*}
  S \Big( \frac{ \indicatornoacc{(0, t)} (s) }{s^{1-a} |t-s|^a} \Big)
  = \frac{ 1 }{ s^{1-a} |t-s|^a } \left\{ \begin{aligned}
     &\tfrac\pi{ \tan (a \pi) }
     &&\text{if } s < t \\
     &\tfrac\pi{ \sin (a \pi) }
     &&\text{if } s > t
   \end{aligned} \right\}
   \quad \text{for a.e.~} 0 < s < 1.
\end{equation*}
In conclusion, passing to the limit $\varepsilon \to 0$ in the right-hand side of \eqref{for:pf:thm:expl:soln:a:5} yields, together with \eqref{for:pf:thm:expl:soln:a:55},
\begin{multline*} 
  \int_0^t \frac{s^a}{(t-s)^a} S \Big( \frac{ u (s) }{s^a} \Big) \, ds \\
  =
  - \frac\pi{ \tan (a \pi) } \int_0^t \frac{u(s)}{|t-s|^a} \, ds 
  - \frac\pi{ \sin (a \pi) } \int_t^1 \frac{u(s)}{|t-s|^a} \, ds 
  + \frac\pi{ \sin (a\pi) } \int_0^1 \frac{ u (s) }{s^a} \, ds.
\end{multline*}
Substituting this expression in \eqref{for:pf:thm:expl:soln:a:6}, we obtain that the term within brackets equals \eqref{for:pf:thm:expl:soln:a:4}, which completes the proof.
\end{proof}

\subsection{Carleman's equation for $a = 0$}
\label{ss:Carl:0}

\begin{thm} [Explicit solution] \label{thm:expl:soln:log}
Let $f \in C^{1,\alpha} ([0,1])$ for some $0 < \alpha < 1$. Then
\begin{equation} \label{for:IE:log}
  \int_0^1 -\log|t-s| \, u(s) \, ds 
  = f (t)
  \quad \text{for a.e.~} 0 < t < 1
\end{equation}
has a unique solution $u \in H_{1 - \log|\cdot|} (0, 1)$. It is given by $u = {\mathcal C_0} f$ (see \eqref{def:Ca}) and satisfies
\begin{align} \label{for:rhobar:expl:log:f:regy} 
 \phi_0 u \in C^\alpha ([0, 1]).
\end{align}
\end{thm}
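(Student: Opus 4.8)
The plan is to mirror the proof of Theorem \ref{thm:expl:soln:a}, replacing the fractional derivative $D^{1-a}$ by the ordinary derivative (the case $a=0$), and to pay special attention to the free additive constant that the logarithmic kernel introduces.

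\emph{Uniqueness.} Given two solutions, their difference $u$ satisfies $(-\log|\cdot|)*u=0$ on $(0,1)$ and lies in $L^p(0,1)$ for every $1<p<2$ (since any solution $w$ has $\phi_0 w\in C^\alpha$, i.e.\ a $t^{-1/2}$-type singularity). Differentiating and invoking Proposition \ref{prop:props:S}.\ref{prop:props:S:log} yields $Su=0$ on $(0,1)$. By the homogeneous case of Proposition \ref{prop:IE} (with $a=0$), every $L^p$-solution of $Su=0$ is a multiple of $1/\phi_0$, so $u=C/(\pi^2\phi_0)$. The classical fact that the logarithmic potential of the equilibrium (arcsine) weight is constant, $(-\log|\cdot|)*\tfrac1{\phi_0}\equiv 2\pi\log 2\neq 0$, then forces $C=0$. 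Thus $1/\phi_0$ spans the one-dimensional classical null space of the log-kernel, and this is precisely why Lemma \ref{lem:Vastxi:zero:means:xi:zero} cannot be applied as directly as in the case $0<a<1$.

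\emph{Existence.} Set $u:=\mathcal C_0 f=\tfrac1{\pi^2\phi_0}\big[S(\phi_0 f')+C_f\big]$ with $C_f=\tfrac1{2\log 2}\int_0^1 f/\phi_0$. Since $f\in C^{1,\alpha}$ we have $f'\in C^\alpha\subset L^p(0,1)$ for every $1<p<2$, so Proposition \ref{prop:IE} (with $a=0$ and datum $f'$) gives $-Su=f'$. By Proposition \ref{prop:props:S}.\ref{prop:props:S:log} this yields $\tfrac{d}{dt}\big[(-\log|\cdot|)*u\big]=f'$, whence $(-\log|\cdot|)*u=f+k$ for some constant $k$. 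To see $k=0$ I pair both sides with $1/\phi_0$: using $S(1/\phi_0)=0$ (the homogeneous case of Proposition \ref{prop:IE}) together with Proposition \ref{prop:props:S}.\ref{prop:props:S:Fubini} gives $\int_0^1 S(\phi_0 f')/\phi_0=0$, hence $\int_0^1 u=C_f/\pi$; feeding this into the pairing, and using again $(-\log|\cdot|)*\tfrac1{\phi_0}\equiv 2\pi\log 2$ and the explicit value of $C_f$, the two contributions cancel and $k=0$. This is exactly the solvability normalisation that fixes the constant in \eqref{def:Ca}.

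\emph{Regularity.} From the formula $\phi_0 u=\tfrac1{\pi^2}\big(S(\phi_0 f')+C_f\big)$, so it suffices to prove $S(\phi_0 f')\in C^\alpha([0,1])$. Split $S(\phi_0 f')=f'\,S\phi_0+\big(S(\phi_0 f')-f'\,S\phi_0\big)$. The first term lies in $C^\alpha$ because $S\phi_0=\pi(t-\tfrac12)$ is smooth (as computed in the proof of Proposition \ref{prop:Ca}) and $f'\in C^\alpha$; the second term lies in $C^\alpha$ by the compactness statement Proposition \ref{prop:props:S:Hol}.\ref{prop:props:S:Hol:cp}, applied with Hölder multiplier $f'$ to $g=\phi_0\in C^{1/2}$. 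The main obstacle is the existence step: unlike the coercive equation $\pi\tan(\tfrac{a\pi}2)u-Su=f$ of Proposition \ref{prop:IE} for $0<a<1$, the equation $-Su=f'$ has a one-parameter family of solutions, and one must verify that the precise normalisation $C_f$ is the unique choice rendering $(-\log|\cdot|)*u=f$ rather than $f+\mathrm{const}$; this hinges on the nonvanishing constant value of the logarithmic potential of $1/\phi_0$, which simultaneously drives the uniqueness argument. A secondary point is that the compactness tool above uses $\phi_0\in C^\alpha$, i.e.\ $\alpha\le\tfrac12$ (the range relevant to the applications, where $\alpha_0=1-1/p_0<\tfrac12$); for larger $\alpha$ one instead invokes the endpoint expansions of Proposition \ref{prop:props:S:Hol}.\ref{prop:props:S:Hol:bdy:0} at each endpoint after subtracting the boundary values of $f'$.
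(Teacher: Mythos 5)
Your existence and regularity steps are essentially the paper's own proof. The paper likewise obtains $-Su=f'$ from \eqref{def:Ca} and Proposition \ref{prop:IE}, integrates via Proposition \ref{prop:props:S}.\ref{prop:props:S:log} to get $(-\log|\cdot|)*u=f+C$, and then kills the constant exactly as you do: pairing with $1/\phi_0$, using $\int_0^1 S(\phi_0 f')/\phi_0=0$ (antisymmetry plus $S(1/\phi_0)=0$), $\int_0^1 u = C_f/\pi$, $\int_0^1 1/\phi_0 = \pi$ and $(-\log|\cdot|)*\phi_0^{-1}\equiv 2\pi\log 2$ — this is precisely the computation of Appendix \ref{app:pf:thm:expl:soln:log}, including the correct constant $2\pi\log 2$. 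Your regularity argument is a harmless variant of the paper's (you take $f'$ as the H\"older multiplier and use the explicit $S\phi_0=\pi(t-\tfrac12)$, where the paper splits $S(\phi_0 g)=(S(\phi_0 g)-\phi_0 Sg)+\phi_0 Sg$ and invokes Proposition \ref{prop:props:S:Hol}.\ref{prop:props:S:Hol:bdd} for the second term); both need $\alpha\le\tfrac12$, and your proposed fallback for $\alpha>\tfrac12$ via Proposition \ref{prop:props:S:Hol}.\ref{prop:props:S:Hol:bdy:0} does not actually apply there either, since that item requires $\alpha<\tfrac{1+a}2=\tfrac12$ when $a=0$. This limitation is shared by the paper's own sketch, and every use of the theorem has $\alpha=1-1/\plog<\tfrac12$, so it is a cosmetic point rather than a flaw specific to you.

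The genuine gap is in uniqueness. The theorem asserts uniqueness in $H_{1-\log|\cdot|}(0,1)$, which is a space of distributions: by the remark following Lemma \ref{l:convo:consy} it contains, e.g., Cantor measures, so an arbitrary solution need not be a function at all, let alone lie in $L^p(0,1)$ with $\phi_0 w\in C^\alpha$. The regularity \eqref{for:rhobar:expl:log:f:regy} is established only for the \emph{constructed} solution $\mathcal C_0 f$; transferring it to a general solution presupposes the uniqueness you are trying to prove, so your reduction to $L^p$ is circular, and the differentiation step (Proposition \ref{prop:props:S}.\ref{prop:props:S:log}) is likewise only justified for $L^p$ functions. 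The paper instead argues as in Theorem \ref{thm:expl:soln:a}, via the Hilbert-space Lemma \ref{lem:Vastxi:zero:means:xi:zero}, which applies to every $\zeta\in H_V(0,1)$ through the density \eqref{for:HV:densy:Ccinf}. The additive-constant issue you correctly identify (the equation's kernel is $-\log|\cdot|$ while the space's kernel is $V=1-\log|\cdot|$) is handled at the distributional level as in Lemma \ref{lem:Un:P13}: for the difference $w$ of two solutions one has $V*w=\int_0^1 w=:c$ a.e.\ on $(0,1)$; subtracting $\tfrac{c}{1+2\log 2}\,\mu$, where $\mu$ is the arcsine measure with $V*\mu\equiv 1+2\log 2$ on $(0,1)$, puts the result in the scope of Lemma \ref{lem:Vastxi:zero:means:xi:zero}, whence $w=\tfrac{c}{1+2\log 2}\,\mu$; then $0=(-\log|\cdot|)*w=\tfrac{2c\log 2}{1+2\log 2}$ forces $c=0$ and $w=0$. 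This full-space uniqueness is not a luxury: Step 3b of Lemma \ref{lem:t} identifies the minimiser $\overline\rho$, a priori known only to lie in $H_V(0,1)$, with $\mathcal C_0 f_{\overline\rho}$, and your $L^p$-restricted uniqueness would not support that identification.
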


\begin{proof}
We follow a similar proof strategy as for Theorem \ref{thm:expl:soln:a}. In fact, the proof is analogous for the uniqueness of $u$. Also, the regularity property \eqref{for:rhobar:expl:log:f:regy} follows by an analogous argument. Therefore, we omit further details, and focus on proving the existence of a solution $u$ to \eqref{for:IE:log}.

In the remainder of the proof, we show that $u := {\mathcal C_0} f$ satisfies \eqref{for:IE:log}. We observe from \eqref{def:Ca} and Proposition \ref{prop:IE} that $u$ satisfies 
\begin{equation} \label{Suf}
  -S u = f'
   \quad \text{a.e.~on } (0,1).   
\end{equation}
Using Proposition \ref{prop:props:S}.\ref{prop:props:S:log} and integrating \eqref{Suf}, we obtain that 
\begin{equation} \label{for:pf:thm:expl:soln:log:4}
  \int_0^1 -\log| t -  s| \,  u( s) \, d  s
  = f ( t) + C \quad \text{for all } t \in (0,1)
\end{equation}
for some constant $C \in \R$. It is left to prove that $C = 0$. This is done by the computation in Appendix \ref{app:pf:thm:expl:soln:log}.
\end{proof}

\section{Proof of Theorem \ref{t}}
\label{s:pfs}

The proof of Theorem \ref{t} is divided in 3 parts:
\begin{enumerate}
  \item both Problems \ref{prob:minz} and \ref{prob:VI} have a unique solution, and both solutions are given by the same measure $\overline \rho$ (Proposition \ref{p:E});
  \item $\overline \rho$ satisfies Problem \ref{prob:wSIE} and has the regularity
     stated in Theorem \ref{t} (Lemma \ref{lem:t});
  \item the uniqueness of solutions to Problem \ref{prob:wSIE} (Lemma \ref{lem:Un:P13}).
\end{enumerate}
The proof of Proposition \ref{p:E} concerns fitting Problem \ref{prob:minz} to Theorem \ref{thm:CoV}. Lemma \ref{lem:Un:P13} is proven at the end, because it relies on the regularity statements in Theorem \ref{t} for an auxiliary problem.

The main part of the proof concerns Lemma \ref{lem:t}, which we prove in seven steps. In the first two steps, we show that $\overline \rho$ is a solution to Problem \ref{prob:wSIE} and satisfies $\supp \overline \rho = [0,1]$. In these steps, we rely on the convexity properties of $V$ and $U$. The sole purpose of the additional assumption in \eqref{VextV:bd} is to provide a sufficient condition for $\supp \overline \rho = [0,1]$. 

In Step 3 we rely on the splitting $V = V_a + \Vreg$ to rewrite \eqref{for:prop:E:on:P:EL:strong} in the form of Carleman's equation, to which Theorems \ref{thm:expl:soln:a} and \ref{thm:expl:soln:log} provide an explicit expression for the solution. From these theorems we obtain that $\overline \rho$ has a continuous representative which satisfies $\overline \rho = \mathcal C_a f_{\overline \rho}$ (see \eqref{def:Ca}). In the remaining Steps 4--7, we examine $\mathcal C_a f_{\overline \rho}$ to establish the remaining regularity properties of $\overline \rho$.

\begin{prop}[Equivalence of Problems \ref{prob:minz}--\ref{prob:VI}] \label{p:E}
Let $0 \leq a < 1$ and let the potentials $V$, $\Vreg$ and $\Vext$ satisfy \eqref{for:V:props} and \eqref{for:Vext:assns}. Then both Problem \ref{prob:minz} and Problem \ref{prob:VI} have a unique solution, and both solutions are equal.
\end{prop}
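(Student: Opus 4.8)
The plan is to recognise Problem \ref{prob:minz} as the minimisation of a quadratic functional over a closed convex set in the Hilbert space $H_V(0,1)$, and then to invoke Theorem \ref{thm:CoV}. Concretely, by Lemma \ref{l:convo:consy} (specifically \eqref{for:Vnorm:on:HVnP}) the interaction term of $E$ equals $\tfrac12 (\rho,\rho)_V = \tfrac12 \|\rho\|_V^2$ for every $\rho \in \mathcal P([0,1]) \cap H_V(0,1)$, while $E(\rho) = \infty$ for $\rho \in \mathcal P([0,1]) \setminus H_V(0,1)$. Hence minimising $E$ over $\mathcal P([0,1])$ is the same as minimising over the set $K := \mathcal P([0,1]) \cap H_V(0,1)$, on which $E$ takes the form $\mathsf E(\rho) = \tfrac12\|\rho\|_V^2 - (f,\rho)_V$ required by Theorem \ref{thm:CoV}, provided the linear term $\int_0^1 \Vext \, d\rho$ is represented as $-(f,\rho)_V$ for a suitable $f \in H_V(0,1)$.

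Two ingredients must be supplied before Theorem \ref{thm:CoV} applies. First, I would produce the element $f$: by Proposition \ref{prop:HV} the norm $\|\cdot\|_V$ is equivalent to that of $H^{-(1-a)/2}(0,1)$, and the regularity assumption \eqref{for:Vext:assns} places $\Vext$ in $H^{(1-a)/2}(0,1)$; therefore $\rho \mapsto -\int_0^1 \Vext\,d\rho$ extends to a bounded linear functional on $H_V(0,1)$ (it is the dual pairing $-\langle \rho, \Vext\rangle$, which restricts to the measure integral on $\mathcal M([0,1])$), and the Riesz representation theorem furnishes a unique $f \in H_V(0,1)$ with $(f,\rho)_V = -\int_0^1 \Vext\,d\rho$, so that $\mathsf E = E$ on $K$. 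Second, I would check that $K$ is closed and convex in $H_V(0,1)$. Convexity is immediate, and $K$ is nonempty since Lebesgue measure lies in $L^2(0,1) \subset H_V(0,1)$. For closedness, if $\rho_n \to \rho$ in $\|\cdot\|_V$ with $\rho_n \in K$, then norm convergence in $H^{-(1-a)/2}(0,1)$ forces $\rho_n \to \rho$ in the sense of distributions; since the $\rho_n$ are non-negative with $\int_0^1 \rho_n = 1$, the mass functional of \eqref{int01rho} is continuous, non-negativity passes to the distributional limit, and a non-negative distribution supported in $[0,1]$ is a measure, so $\rho \in K$.

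Granting these, Theorem \ref{thm:CoV} yields a unique minimiser $\overline\rho \in K$ of $\mathsf E$ (hence of $E$ over $\mathcal P([0,1])$), characterised by the variational inequality $0 \le (\overline\rho - f, v - \overline\rho)_V$ for all $v \in K$. To identify this with Problem \ref{prob:VI}, I would expand it using $(f,\cdot)_V = -\int_0^1 \Vext\,d(\cdot)$ together with the symmetric identity $(\overline\rho, v)_V = \int_0^1 (V*\overline\rho)\,dv$ from \eqref{for:Vnorm:on:HVnP}, which turns the inequality into $\int_0^1 h_{\overline\rho}\,dv \ge \int_0^1 h_{\overline\rho}\,d\overline\rho$ for all $v \in K$, i.e.\ exactly \eqref{for:prop:E:on:P:EL:weak}. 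As every step is reversible, solutions of the variational inequality coincide with those of Problem \ref{prob:VI} lying in $K$; a short argument then shows any solution of Problem \ref{prob:VI} must have finite energy (testing against Lebesgue measure gives a finite left-hand side, ruling out atoms and, more generally, $\rho \notin H_V$, for which the right-hand side would be $+\infty$), so the two solution sets agree and uniqueness is inherited from Theorem \ref{thm:CoV}.

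The step I expect to be the main obstacle is the closedness of $K$, i.e.\ controlling the interplay between $\|\cdot\|_V$-convergence and the measure-theoretic constraints: one must argue carefully that the $H^{-(1-a)/2}$-limit of probability measures is again a non-negative probability measure rather than a more singular distribution, which is where the identification of non-negative distributions with measures, together with the preserved unit mass from \eqref{int01rho}, does the work.
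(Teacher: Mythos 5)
Your proposal is correct and follows essentially the same route as the paper's proof: both fit Problem \ref{prob:minz} into Theorem \ref{thm:CoV} with $K = \mathcal P([0,1]) \cap H_V(0,1)$, represent $\rho \mapsto \int_0^1 \Vext\,d\rho$ via Riesz using $\Vext \in H^{(1-a)/2}$ and Corollary \ref{cor:props:HV}, prove closedness of $K$ by passing non-negativity, support and unit mass to the distributional limit, and translate the abstract variational inequality into \eqref{for:prop:E:on:P:EL:weak} via \eqref{for:Vnorm:on:HVnP}. Even your treatment of the converse direction --- testing a putative solution $\rho \notin H_V(0,1)$ of Problem \ref{prob:VI} against Lebesgue measure to get a finite left-hand side against an infinite right-hand side --- is exactly the paper's contradiction argument.
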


\begin{proof} 
First we show that any minimiser of $E$ has to be in $H_V(0,1)$. Let $\rho \in \mathcal P ([0,1])$ such that $E(\rho) < \infty$. Since $\Vext \geq 0$, we find that $\int_0^1 (V*\rho) \, d\rho < \infty$. Then, by Proposition \ref{prop:HV}, we conclude that $\rho \in H_V(0,1)$. 
In addition, by Lemma \ref{l:convo:consy}, we can write
\begin{equation} \label{E:Vip}
   E(\rho) = \frac12 ( \rho, \rho)_V + \int_0^1 \Vext \, d\rho. 
 \end{equation} 

Next we show that Theorem \ref{thm:CoV} applies to the energy $E$ with $H_V(0,1)$ as the Hilbert space and $H_V(0,1) \cap \mathcal P ([0,1])$ as the closed convex subset. Convexity of $H_V(0,1) \cap \mathcal P ([0,1])$ is obvious, and closedness follows by interpreting any $H_V(0,1)$-converging sequence $(\zeta_n) \subset H_V(0,1) \cap \mathcal P ([0,1])$ as a converging sequence of distributions on $\R$ with support in $[0,1]$, for which non-negativity is conserved in the limit, and the unit integral condition follows by testing with any $\varphi \in C_c^\infty(\R)$ with $\varphi |_{(0,1)} = 1$. For the linear term in \eqref{E:Vip}, we use \eqref{for:Vext:assns} to extend $\Vext$ to $\R$ such that $\Vext \in H^{(1-a)/2}(\R) \cap C_c(\R)$. Then, $\langle \Vext, f \rangle := \int_\R \Vext f$ extends as a bounded linear functional on $L^2 (\R)$ to a bounded linear functional on $H^{-(1-a)/2}(\R) + \mathcal M(\R)$. Using Corollary \ref{cor:props:HV}, we find in particular that $\langle \Vext, \cdot \rangle$ is a bounded linear functional on $H_V(0,1)$, and that there exists $\xiext \in H_V(0,1)$ such that $\langle \Vext, \eta \rangle = (\xiext, \eta)_V$ for all $\eta \in H_V(0,1)$. We conclude that Theorem \ref{thm:CoV} applies.

By Theorem \ref{thm:CoV} we obtain that both Problem \ref{prob:minz} and the variational inequality given by
\begin{equation} \label{pf:prop:E:on:P:m1}
  0 \leq (\rho + \xiext, \mu - \rho)_V,
  \quad \text{for all } \mu \in \mathcal P ([0,1]) \cap H_V(0,1),
\end{equation}
with solution concept $\rho \in H_V(0,1) \cup \mathcal P ([0,1])$, have the same unique solution $\overline \rho$. 

It is left to show the equivalence with Problem \ref{prob:VI}. We first show that any solution $\rho \in H_V(0,1) \cup \mathcal P ([0,1])$ of \eqref{pf:prop:E:on:P:m1} satisfies \eqref{for:prop:E:on:P:EL:weak}. We expand
\begin{equation} \label{pf:prop:E:on:P:m05}
  0 \leq (\rho + \xiext, \mu - \rho)_V
  = (\rho, \mu)_V - (\rho, \rho)_V + (\xiext, \mu - \rho)_V.
\end{equation}
Interpreting $\rho, \mu$ as measures with no atoms, we use \eqref{for:Vnorm:on:HVnP} to write
\begin{equation*}
  (\rho, \mu)_V - (\rho, \rho)_V
  = \int_0^1 V * \rho \, d\mu - \int_0^1 V * \rho \, d\rho.
\end{equation*}
Moreover, since $\langle \Vext, \cdot \rangle$ is extended to $\mathcal M([0,1])$, its extension to the subset $\mathcal P([0,1])$ is simply given by $\langle \Vext, \tilde \mu \rangle = \int_{[0,1]} U \, d\tilde \mu$. Hence,
\begin{equation*}
  (\xiext, \mu - \rho)_V
  = \langle \Vext, \mu \rangle - \langle \Vext, \rho \rangle 
  = \int_0^1 U \, d\mu - \int_0^1 U \, d\rho.
\end{equation*}
Collecting our results, we obtain that \eqref{pf:prop:E:on:P:m05} implies \eqref{for:prop:E:on:P:EL:weak}. 

To show that any solution $\rho \in \mathcal P ([0,1])$ to Problem \ref{prob:VI} is also a solution to \eqref{pf:prop:E:on:P:m1}, we separate two cases. If $\rho \in H_V(0,1)$, then the argument above readily shows that $\rho$ satisfies \eqref{pf:prop:E:on:P:m1}. If $\rho \notin H_V(0,1)$, then 
\begin{equation*}
  \int_0^1 h_\rho \, d\rho
  \geq \int_0^1 V * \rho \, d\rho 
  = \infty.
\end{equation*}
However, for $\mu = \mathcal L |_{(0,1)}$, we find with $V_k$ as in Lemma \ref{lem:props:V}.\ref{lem:props:V:V_k} that
\begin{equation*}
  \int_0^1 h_\rho \, d\mu 
  = \lim_{k \to \infty} \int_0^1 \int_0^1 V_k(t-s) \, d \rho (s) \, dt + \int_0^1 U
  \leq \int_{-1}^1 V + \int_0^1 U < \infty,
\end{equation*}
which contradicts \eqref{for:prop:E:on:P:EL:weak}.
\end{proof}

\begin{lem}[Regularity properties of Theorem \ref{t}] \label{lem:t}
Let $a$, $\Vreg$, $\Vext$ and $\ell$ be as in Theorem \ref{t}. Let $\overline \rho$ be the solution of Problem \ref{prob:minz} provided by Proposition \ref{p:E}. Then, $\overline \rho$ is a solution to Problem \ref{prob:wSIE} and satisfies all properties listed in Theorem \ref{t}. Moreover, the continuous representative of $\overline \rho$ satisfies $\overline \rho = \mathcal C_a f_{\overline \rho}$, where $\mathcal C_a$ is defined in \eqref{def:Ca},
\begin{equation} \label{forho} 
  f_{\overline \rho} := \overline C - \Vreg * \overline \rho - \Vext
  \quad \text{and} \quad 
  \overline C = \int_0^1 h_{\overline \rho} (t) \overline \rho (t) \, dt.
\end{equation}
\end{lem}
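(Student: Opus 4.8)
The plan is to build on Proposition \ref{p:E}, which already provides the minimiser $\overline\rho$ as the unique solution of the variational inequality \eqref{for:prop:E:on:P:EL:weak}. \textbf{Step 1 (Euler--Lagrange relations).} First I would set $\overline C := \int_0^1 h_{\overline\rho}\,d\overline\rho$ and read off from \eqref{for:prop:E:on:P:EL:weak}, by testing against measures concentrating at a point, the two-sided Frostman conditions $h_{\overline\rho}\ge \overline C$ on $[0,1]$ and $h_{\overline\rho}=\overline C$ on $\supp\overline\rho$ (the latter because $\int(h_{\overline\rho}-\overline C)\,d\overline\rho=0$ with a non-negative integrand). This is the ``obvious'' part; the real content is that $\supp\overline\rho=[0,1]$.

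\textbf{Step 2 (full support).} Here I would exploit two facts. On the open set $(\supp\overline\rho)^c$ the kernel $s\mapsto V_a(t-s)$ is smooth for $s\in\supp\overline\rho$, so $V_a*\overline\rho$ is smooth there, while $\Vreg*\overline\rho\in W^{2,1}$ by Lemma \ref{lem:convo:with:psi} and $\Vext$ is regular; hence $h_{\overline\rho}\in W^{2,1}_{\operatorname{loc}}$ on the complement, with $h_{\overline\rho}''(t)=\int_0^1 V''(t-s)\,d\overline\rho(s)+\Vext''(t)\ge0$, i.e.\ $h_{\overline\rho}$ is convex on every gap. For an interior gap $(\alpha,\beta)$ one has $h_{\overline\rho}(\alpha)=h_{\overline\rho}(\beta)=\overline C$, so convexity forces $h_{\overline\rho}\le\overline C$, whence $h_{\overline\rho}\equiv\overline C$ and $h_{\overline\rho}''\equiv0$; but $\alpha\in\supp\overline\rho$ carries mass arbitrarily close to $\alpha$, and the singularity bound \eqref{for:V:props:lambda} makes $\int_0^1 V''(t-s)\,d\overline\rho(s)>0$ for $t$ just to the right of $\alpha$, a contradiction. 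For a gap reaching an endpoint, say $[0,t_1)$ with $t_1=\min\supp\overline\rho>0$, I would instead estimate $h_{\overline\rho}'(t)=\int_0^1(-V'(s-t))\,d\overline\rho(s)+\Vext'(t)$ for $t<t_1$: since $V'$ is non-decreasing and $V'(1)\le0$, one gets $-V'(s-t)\ge|V'(1)|$, so with \eqref{VextV:bd} $h_{\overline\rho}'(t)\ge|V'(1)|-\sup|\Vext'|\ge0$, and the inequality is strict because mass sits near $t_1$ where $V'(s-t)<V'(1)$; thus $h_{\overline\rho}$ is strictly increasing just left of $t_1$, forcing $h_{\overline\rho}<\overline C$ there and contradicting $h_{\overline\rho}\ge\overline C$. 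Hence $\supp\overline\rho=[0,1]$. I expect this step to be the main obstacle, precisely because it requires both the regularity of $h_{\overline\rho}$ off the support and the careful use of the convexity and of the artificial bound \eqref{VextV:bd}.

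\textbf{Step 3 (Carleman's equation).} With $\supp\overline\rho=[0,1]$, the relations of Step 1 give $h_{\overline\rho}=\overline C$ a.e.\ on $(0,1)$, so \eqref{for:prop:E:on:P:EL:strong} holds and, splitting $V=V_a+\Vreg$, I would rewrite it as $V_a*\overline\rho=f_{\overline\rho}$ a.e., with $f_{\overline\rho}$ as in \eqref{forho}. The point is that $f_{\overline\rho}$ meets the hypotheses of the Carleman theorems: $\Vreg*\overline\rho\in W^{2,1}$ (resp.\ $W^{2,\plog}$) by Lemma \ref{lem:convo:with:psi} and $\Vext$ has the same regularity, so $f_{\overline\rho}\in W^{2,1}(0,1)$ if $0<a<1$ and $f_{\overline\rho}\in C^{1,1-1/\plog}([0,1])$ if $a=0$ (via Morrey, Proposition \ref{prop:Morrey}). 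Theorem \ref{thm:expl:soln:a} (resp.\ Theorem \ref{thm:expl:soln:log}) then yields a unique solution of $V_a*u=f_{\overline\rho}$ in $H_{V_a}(0,1)=H_V(0,1)$; since $\overline\rho$ is such a solution, uniqueness (ultimately Lemma \ref{lem:Vastxi:zero:means:xi:zero}) gives $\overline\rho=\mathcal C_a f_{\overline\rho}$. In particular $\overline\rho$ has a continuous representative, $\int_0^1\overline\rho=1$, and $(\overline\rho,\overline C)$ solves Problem \ref{prob:wSIE}; finally $\overline C=(\overline\rho,\overline\rho)_V+\int_0^1\Vext\,d\overline\rho>0$ follows from \eqref{for:Vnorm:on:HVnP} and $\overline\rho\neq0$.

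\textbf{Step 4 (regularity).} It remains to read the regularity off the explicit formula \eqref{def:Ca}. Near the endpoints the factor $1/\phi_a$ produces the leading terms $C_i\,t^{-(1-a)/2}$ and $C_i\,(1-t)^{-(1-a)/2}$ of \eqref{for:prop:E:on:P:regy:Hol}, while the H\"older regularity of the remainders $R_i$ comes from the mapping properties of $S$ on weighted H\"older spaces (Proposition \ref{prop:props:S:Hol}) applied to $\phi_a D^{1-a}f_{\overline\rho}$, together with Proposition \ref{prop:reg:Da}. The interior Sobolev bound \eqref{for:prop:E:on:P:regy:Wloc} follows by differentiating $\mathcal C_a f_{\overline\rho}$ using Proposition \ref{prop:props:S} and the fractional-calculus estimates of Proposition \ref{prop:reg:Da}. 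Finally, when $\ell\ge3$ the bound \eqref{for:prop:E:on:P:regy:Wloc} upgrades to $\overline\rho\in C^2_{\operatorname{loc}}(0,1)$, so the differentiated equation \eqref{SIE} may be evaluated pointwise; a minimum-principle argument at a hypothetical interior zero of $\overline\rho$ then yields $\overline\rho>0$ on $(0,1)$.
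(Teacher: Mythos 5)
Your Steps 2--4 reproduce the paper's argument almost exactly, but Step 1 contains a genuine gap, and it is the step on which your Step 2 leans. You assert the pointwise Frostman conditions $h_{\overline\rho}\geq\overline C$ on $[0,1]$ and $h_{\overline\rho}=\overline C$ on $\supp\overline\rho$ ``by testing against measures concentrating at a point''. Dirac masses are inadmissible in \eqref{for:prop:E:on:P:EL:weak} (they have infinite energy), and testing with mollified Diracs does not help: lower semi-continuity of $h_{\overline\rho}$ bounds $h_{\overline\rho}(t)$ from \emph{above} by the liminf of the averages, which is the wrong direction for deducing $h_{\overline\rho}(t)\geq\overline C$. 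What testing with normalized Lebesgue densities genuinely yields is $h_{\overline\rho}\geq\overline C$ only Lebesgue-a.e., and this cannot be upgraded to $\overline\rho$-a.e.: at this stage $\overline\rho$ is only known to lie in $H_V(0,1)\cap\mathcal P([0,1])$, and, as the remark after Lemma \ref{l:convo:consy} points out, such measures may charge Lebesgue-null (Cantor-type) sets. Hence your deduction ``$\int_0^1(h_{\overline\rho}-\overline C)\,d\overline\rho=0$ with a non-negative integrand'' is unjustified, as is the pointwise equality $h_{\overline\rho}(\alpha)=h_{\overline\rho}(\beta)=\overline C$ at the endpoints of a gap used in your Step 2 --- which moreover requires continuity of $h_{\overline\rho}$ at $\alpha,\beta$ from inside the gap, itself nontrivial (the paper proves one-sided continuity by splitting $h_{\overline\rho}$ into a continuous part and a monotone part). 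Repairing your Step 1 as stated would need capacity theory for the kernel $V$ (finite-energy measures do not charge null-capacity sets), machinery the paper deliberately avoids.

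The paper circumvents Frostman conditions entirely: it sets $m:=\essinf_{(0,1)}h_{\overline\rho}$, uses lower semi-continuity only to conclude that the superlevel set $A=\{h_{\overline\rho}>m\}$ is open, tests \eqref{for:prop:E:on:P:EL:weak} against probability measures with $L^2$ densities as in \eqref{pf:prop:E:on:P:1a}, and shows each component $(r,s)$ of $A$ must be $\overline\rho$-null; convexity of $h_{\overline\rho}$ on $(r,s)$ together with $h_{\overline\rho}(r),h_{\overline\rho}(s)\leq m$ (because $r,s\notin A$), plus the case analysis using \eqref{VextV:bd} for components touching $\{0,1\}$, gives $h_{\overline\rho}=C$ a.e.\ on $(0,1)$ \emph{first}; full support then follows because a support gap would force $h_{\overline\rho}''>0$ near its endpoints by \eqref{for:V:props:lambda}, contradicting constancy. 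Beyond this reordering, your route matches the paper's: the same endpoint-gap estimate via \eqref{VextV:bd}, the same passage to Carleman's equation via Lemma \ref{lem:convo:with:psi} (with Morrey when $a=0$) and identification through Lemma \ref{lem:Vastxi:zero:means:xi:zero}, the same extraction of \eqref{for:prop:E:on:P:regy:Hol} and \eqref{for:prop:E:on:P:regy:Wloc} from the formula $\overline\rho=\mathcal C_a f_{\overline\rho}$, and for $\ell\geq3$ the paper's positivity proof is precisely the second-derivative version of your ``minimum principle'': it evaluates $h_{\overline\rho}''$ pointwise at a hypothetical interior zero of $\overline\rho$ and lets the singularity of $V''$ produce a strictly positive value, contradicting $h_{\overline\rho}''=0$.
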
 

\begin{proof}
We recall from \eqref{convo:Vrho} and Corollary \ref{cor:props:HV} that $h_{\overline \rho} = V * \overline \rho + \Vext \in L^2(0,1)$ is lower semi-continuous on $[0,1]$. 

\textit{Step 1: $\overline \rho$ satisfies Problem \ref{prob:wSIE} for some $C \geq 0$}. In this step we prove by contradiction that 
\begin{equation} \label{pf:prop:E:on:P:0}
  \exists \, C \geq 0 : h_{\overline \rho} = C 
  \quad \text{a.e.~on } (0,1). 
\end{equation} 
Suppose $h_{\overline \rho}$ is not constant a.e.~on $(0,1)$, i.e.
\begin{equation} \label{pf:prop:E:on:P:1}
  0
  \leq m 
  := \operatorname*{ess\,inf}_{(0,1)} h_{\overline \rho}
  < \sup_{(0,1)} h_{\overline \rho}
  \leq \infty.
\end{equation}
By \eqref{pf:prop:E:on:P:1} and $h_{\overline \rho} \in L^2(0,1)$, we have 
\begin{equation} \label{pf:prop:E:on:P:1a}
  \forall \, \varepsilon > 0 
  \: \exists \, g \in \mathcal P ([0,1]) \cap L^2(0,1) 
  : \int_0^1 h_{\overline \rho} g < m + \varepsilon.
\end{equation}
We reach a contraction between \eqref{pf:prop:E:on:P:1a} and \eqref{for:prop:E:on:P:EL:weak} by showing that 
\begin{equation} \label{pf:prop:E:on:P:2}
  \int_0^1 h_{\overline \rho} d \overline \rho > m.  
\end{equation}
With this aim, we consider the superlevel set 
\begin{equation*}
  A := \big\{ h_{\overline \rho} > m \big\}.
\end{equation*}
By \eqref{pf:prop:E:on:P:1}, $A \neq \emptyset$, and since $h_{\overline \rho}$ is lower semi-continuous, $A$ is open. We take $(r,s) \subset A$ to be any open component\footnote{\label{fn:1} If $r = 0$ or $s = 1$, then also the boundary of the interval may be included in the open component}. If $\overline \rho ((r,s)) > 0$, then \eqref{pf:prop:E:on:P:2} holds, and the contradiction is reached. Hence, we assume $\overline \rho ((r,s)) = 0$, and thus $(r,s) \cap \supp \overline \rho = \emptyset$. By \eqref{for:V:props} and \eqref{for:Vext:assns} we then find that 
\begin{equation} \label{pf:prop:E:on:P:3}
  h_{\overline \rho} \in C([r,s]) \cap W_{\text{loc}}^{2,1}(r, s),
  \quad \text{and} \quad
  h_{\overline \rho}'' = V'' * \overline \rho + \Vext'' \geq 0 
  \quad \text{on } (r,s),
\end{equation} 
where continuity up to the boundary holds by the following argument. For $r < t < s$ we split
\begin{equation*} 
  h_{\overline \rho}(t) 
  = \Vext (t) + \int_0^r V (t - \tau) \, d\overline \rho (\tau) + \int_s^1 V (t-\tau) \, d\overline \rho (\tau),
\end{equation*} 
where the first two terms are continuous for $t > r$, and the third term is increasing for $t \leq s$. Hence, $\limsup_{t \uparrow s} h_{\overline \rho}(t) \leq h_{\overline \rho}(s)$, 
and together with $h_{\overline \rho}$ being lower semi-continuous on $[0,1]$, we obtain that $h_{\overline \rho}$ is left-continuous at $t = s$. A similar argument shows that $h_{\overline \rho}$ is right-continuous at $t = r$, and thus \eqref{pf:prop:E:on:P:3} follows.

We separate three cases to complete the contradiction between \eqref{pf:prop:E:on:P:1a} and \eqref{for:prop:E:on:P:EL:weak}:
\begin{enumerate}
  \item Let $r = 0$ and $s = 1$. This contradicts with $A \cap \supp \overline \rho = \emptyset$.
  \item Let $0 < r < s < 1$. Since $r, s \notin A$, it holds that $(V * \overline \rho) (r), (V * \overline \rho) (s) \leq m$. By definition of $A$, we also have that $h_{\overline \rho} > m$ on $(r,s)$. However, by the convexity of $h_{\overline \rho}$ (see \eqref{pf:prop:E:on:P:3}) and $h_{\overline \rho} \in C(\overline A)$ we find that $h_{\overline \rho} \leq m$  on $[r,s]$, and a contradiction is reached.
  \item Let $r = 0$ and $s < 1$; the case $r > 0$ and $s = 1$ can be dealt with analogously. Given that $\varepsilon > 0$ is as in \eqref{for:V:props:lambda}, we obtain
  \begin{equation} \label{pf:prop:E:on:P:43}
    h_{\overline \rho}'(t) 
    = (V' * \overline \rho)(t) + \Vext' (t)
    > |V'(1)| - \sup_{(0,1)} |\Vext'| \geq 0,
    \quad \text{for all } s - \varepsilon < t < s.
  \end{equation}
   We obtain the desired contradiction similarly to case 2 above.
\end{enumerate}
This concludes the proof of \eqref{pf:prop:E:on:P:0}. 
\smallskip

\textit{Step 2: $\supp \overline \rho = [0,1]$}.
We prove $\supp \overline \rho = [0,1]$ by a small modification to the argument in Step 1. Suppose that the open set $[0,1] \setminus \supp \overline \rho$ is non-empty, and set $(r,s)$ as one of its components\footnote{See footnote \ref{fn:1} on page \pageref{fn:1}}. Then \eqref{pf:prop:E:on:P:3} holds, and $h_{\overline \rho}'' > 0$ close to the endpoints of $(r, s)$ (by \eqref{for:V:props:lambda}) implies that $h_{\overline \rho}$ is not constant a.e.~on $(r, s)$, which contradicts \eqref{pf:prop:E:on:P:0}.
\smallskip

\textit{Step 3: $\overline \rho$ has an integrable, continuous representative on $(0,1)$ which satisfies $\overline \rho = \mathcal C_a f_{\overline \rho}$}. We rewrite \eqref{pf:prop:E:on:P:0} as
\begin{equation} \label{pf:prop:E:on:P:5}
  C 
  = h_{\overline \rho} 
  = V_a * \overline \rho + \Vreg * \overline \rho + \Vext
  \quad \text{a.e.~on } (0,1).
\end{equation}

\textit{Step 3a: $0 < a < 1$}. Since Lemma \ref{lem:convo:with:psi} implies that $\Vreg * \overline \rho + \Vext \in W^{2,1}(0,1)$, Theorem \ref{thm:expl:soln:a} states that 
\begin{equation} \label{pf:prop:E:on:P:6}
  V_a * \rho
  = C - \Vreg * \overline \rho - \Vext
  \quad \text{a.e.~on } (0,1)
\end{equation}
has a unique solution $\tilde \rho$ in $H_{V_a}(0,1) \cong H_V(0,1)$ (cf.~Proposition \ref{prop:HV}).
Subtracting \eqref{pf:prop:E:on:P:5} from \eqref{pf:prop:E:on:P:6}, we find $V_a * (\tilde \rho - \overline \rho) = 0$. Then, by Lemma \ref{lem:Vastxi:zero:means:xi:zero}, we obtain $\overline \rho = \tilde \rho$ a.e.~on $(0,1)$. Hence, by Theorem \ref{thm:expl:soln:a},  $\overline \rho$ has an integrable, continuous representative on $(0,1)$ which satisfies $\overline \rho = \mathcal C_a f_{\overline \rho}$.

\textit{Step 3b: $a = 0$}. Since the argument is similar to Step 3a, we focus on the differences. To prove that \eqref{pf:prop:E:on:P:6} has a unique solution $\tilde \rho$ in $H_{1-\log|\cdot|} \cong H_V(0,1)$, we obtain from Lemma \ref{lem:convo:with:psi} that the right-hand side of \eqref{pf:prop:E:on:P:6} is in $W^{2, \plog} (0,1)$, find by Proposition \ref{prop:Morrey} that $W^{2, \plog} (0,1) \subset C^{1, 1 - 1/\plog} ([0,1])$, and conclude with Theorem \ref{thm:expl:soln:log} that \eqref{pf:prop:E:on:P:6} has a unique solution $\tilde \rho \in H_V(0,1)$. Again, we obtain $\overline \rho = \tilde \rho$ a.e.~on $(0,1)$, and conclude by the further statements of Theorem \ref{thm:expl:soln:log}. 

From here, we denote by $\overline \rho$ the integrable, continuous representative.
\smallskip

\textit{Step 4: $C = \overline C > 0$}. Since $\overline \rho \in L^1(0,1)$, we obtain, using Step 1 and Lemma \ref{l:convo:consy}, that 
$$
  C 
  = \int_0^1 C \overline \rho
  = \int_0^1 h_{\overline \rho} \overline \rho 
  = (\overline \rho, \overline \rho)_V + \int_0^1 U \overline \rho
  > 0.
$$
\smallskip

\textit{Step 5: $\overline \rho$ satisfies \eqref{for:prop:E:on:P:regy:Wloc}}.
We first treat the case $0 < a < 1$. Since $\overline \rho \in L^1(0,1)$ we obtain from \eqref{for:V:props:lambda:psi} that 
\begin{equation} \label{pf:frho:Wl11}
  f_{\overline \rho} = \overline C - \Vreg * \overline \rho - \Vext \in W^{\ell + 1 ,1} (0,1).
\end{equation}
Let $q$ be the $(\ell-1)$-th order Taylor polynomial of $f_{\overline \rho}$ at $0$, and set $g := D^{1-a} ( f_{\overline \rho} - q)$. We expand 
\begin{equation*} 
  \overline \rho 
  = \mathcal C_a f_{\overline \rho}
  = \mathcal C_a q + C g
   + C' \tfrac1{\phi_a} S \big( \phi_a g \big),
\end{equation*}
where $C, C' \in \R$ are constants. By Proposition \ref{prop:Ca}, $\mathcal C_a q \in C^\infty_{\text{loc}} (0,1)$. For the other two terms in the right-hand side, we obtain from Proposition \ref{prop:reg:Da}.\ref{prop:reg:Da:Sob} that $g = D^{1-a} ( f_{\overline \rho} - q) \in W^{\ell, p} (0,1)$ for any $1 \leq p < \tfrac1{1-a}$. 

It remains to show that $S ( \phi_a g ) \in W^{\ell, p}_{\text{loc}} (0,1)$ for any $1 \leq p < \tfrac1{1-a}$. We take $[t_1, t_2] \subset (0,1)$ arbitrary, and choose $\psi \in W^{\ell, p} (0,1)$ such that $\psi |_{(t_1, t_2)} = ( \phi_a g ) |_{(t_1, t_2)}$. Then
\begin{equation} \label{pf:prop:E:on:P:8}
  \big( S ( \phi_a g ) \big) (t) 
  = (S \psi) (t) + \int_0^{t_1} \frac{ (\phi_a g - \psi) (s) }{t-s} \, ds \\
    + \int_{t_2}^1 \frac{ (\phi_a g - \psi) (s) }{t-s} \, ds,
    \quad t_1 < t < t_2.
\end{equation}
Since for any $t_1 < t < t_2$ the map $s \mapsto (t-s)^{-1}$ is in $C^\infty ( [0, t_1] \cup [t_2, 1] )$, the two integrals in the right-hand side of \eqref{pf:prop:E:on:P:8} are in $C^\infty ((t_1, t_2))$ as functions of $t$. By Proposition \ref{prop:props:S}.\ref{prop:props:S:BLO},\ref{prop:props:S:der} we obtain  $S \psi \in W^{\ell, p}_{\text{loc}} (0,1)$. Since $0 < t_1 < t_2 < 1$ are arbitrary, we conclude that $S ( \phi_a g ) \in W^{\ell, p}_\text{loc} (0,1)$, which completes the proof of \eqref{for:prop:E:on:P:regy:Wloc}.

The case $a = 0$ follows by a simplification of the same argument. The only difference is that \eqref{for:V:props:lambda:psi} and $\overline \rho \in wL^2(0,1) \subset L^{\plog}(0,1)$ imply
$
  f_{\overline \rho} \in W^{\ell + 1 ,\plog} (0,1).
$
No Taylor polynomial needs to be subtracted; simply putting
$
  g := f_{\overline \rho}' \in W^{\ell, \plog} (0,1)
$
suffices to repeat the argument used in \eqref{pf:prop:E:on:P:8}.
\smallskip

\textit{Step 6: $\overline \rho$ satisfies \eqref{for:prop:E:on:P:regy:Hol}}. We focus on the expansion of $\overline \rho(t)$ around $t=0$; the proof of the expansion around $t=1$ is analogous.

\textit{Step 6a: $0 < a < 1$}. We expand
$$
  f_{\overline \rho}(t) = A + Bt + R(t),
$$
where we rely on \eqref{pf:frho:Wl11} to choose the constants $A, B \in \R$ such that the remainder term $R \in C^1([0,1])$ satisfies $R(t) = o(t)$. Then, we obtain from Step 3 that
\begin{equation} \label{p:Caf:Td}
  \overline \rho = \mathcal C_a f_{\overline \rho} = \mathcal C_a (A + Bt) + \mathcal C_a R.  
\end{equation}

For the affine part, we apply Proposition \ref{prop:Ca} to obtain 
$$
  \mathcal C_a (A + Bt) = \frac{ C + C' t }{\phi_a (t)}.
$$
Here and in the remainder of Step 6, $C, C' \in \R$ denote some explicit constants that can change from line to line. For the remainder part in \eqref{p:Caf:Td}, we expand
$$
  \mathcal C_a R
  = \frac C{\phi_a} S ( \phi_a D^{1-a} R )
    + C' D^{1-a} R.
$$
From Proposition \ref{prop:reg:Da}.\ref{prop:reg:Da:Hol} we obtain that $R_1 := D^{1-a} R \in C^a([0,1])$ with $R_1(0) = 0$. Then, from Proposition \ref{prop:props:S:Hol}.\ref{prop:props:S:Hol:bdy:0} we obtain
$$
  \frac1{\phi_a} S ( \phi_a R_1 ) 
  = \frac C{\phi_a} + R_2,
$$
where $R_2 \in C^a([0,1-\varepsilon])$ with $R_2(0) = 0$ for any small $\varepsilon > 0$. Inserting all these findings in \eqref{p:Caf:Td}, we obtain
\begin{equation} \label{p:Caf:Td1}
  \overline \rho = \mathcal C_a f_{\overline \rho} = \frac C{\phi_a} + R_3,  
\end{equation}
where $R_3 \in C^a([0,1-\varepsilon])$ with $R_3(0) = 0$. This proves \eqref{for:prop:E:on:P:regy:Hol} for $0 < a < 1$. 

\textit{Step 6b: $a = 0$}. The proof is an easier version of Step 6a. For $a = 0$, \eqref{def:Ca} reads as
\begin{equation*} 
  \overline \rho
  = \mathcal C_0 \overline \rho
  = \frac1{\pi^2} \bigg( \frac1{\phi_0} S ( \phi_0 f_{\overline \rho}' )
    + \frac C{\phi_0} \bigg).
\end{equation*}
From Step 5 we get that $f_{\overline \rho}' \in C^{\alpha_0}([0,1])$ with $\alpha_0 = 1 - 1/\plog \in (0, \tfrac12)$. Hence, the term involving $S$ can be treated similarly as in Step 6a by expanding $f_{\overline \rho}'$ around $0$. This proves \eqref{for:prop:E:on:P:regy:Hol}. 
\smallskip

\textit{Step 7: $\overline \rho > 0$ on $(0,1)$}. 
We take $\ell \geq 3$, and reason by contradiction. Assume that there exists $0 < t_0 < 1$ such that $\overline \rho (t_0) = 0$. We set $0 < r < t_0 \wedge (1 - t_0)$, and compute for any $t \in B_r(t_0) = (t_0 -r, t_0 + r)$
\begin{multline*}
  0
  = h_{\overline \rho}''(t)
  = (V * \overline \rho)''(t) + \Vext''(t) \\
  = \int_{ B_r(t_0)^c } V'' (t - s) \overline \rho (s) \, ds
    + \frac{ d^2 }{ dt^2 } \bigg[ \int_{ t - t_0 -r }^{ t - t_0 +r } V(s) \overline \rho (t - s) \, ds \bigg]  + \Vext''(t).
\end{multline*}
To compute the derivative explicitly, we use \eqref{for:prop:E:on:P:regy:Wloc} and $\ell \geq 3$ to obtain $\overline \rho \in C^2 (0,1)$. This yields
\begin{align*}
  &\frac{ d^2 }{ dt^2 } \bigg[ \int_{ t - t_0 -r }^{ t - t_0 +r } V(s) \overline \rho (t - s) \, ds \bigg] \\
  &= \frac{ d }{ dt } \bigg[ 
      V (t - t_0 + r) \overline \rho (t_0 - r) 
      - V (t - t_0 - r) \overline \rho (t_0 + r)       
      + \int_{ t - t_0 -r }^{ t - t_0 +r } V(s) \overline \rho' (t - s) \, ds \bigg] \\
  &= V' (t - t_0 + r) \overline \rho (t_0 - r) 
    - V' (t - t_0 - r) \overline \rho (t_0 + r) \\
    &\quad + V (t - t_0 + r) \overline \rho' (t_0 - r) 
    - V (t - t_0 - r) \overline \rho' (t_0 + r)     
    + \int_{ t - t_0 -r }^{ t - t_0 +r } V(s) \overline \rho'' (t - s) \, ds.
\end{align*}
Setting $t = t_0$ and using that $V$ is even, we find
\begin{multline} \label{pf:prop:E:on:P:10}
  0
  = h_{\overline \rho}''(t_0)
  = \int_{ B_r(t_0)^c } V'' (t_0 - s) \overline \rho (s) \, ds
    + r^2 V' (r) \frac{ \overline \rho (t_0 + r) + \overline \rho (t_0 - r) }{ r^2 } \\
    + 2r V(r) \frac{ \overline \rho' (t_0 + r) - \overline \rho' (t_0 - r) }{2r}
    + \int_{ -r }^{ r } V(s) \overline \rho'' (t_0 - s) \, ds
    + \Vext''(t_0).
\end{multline}

Next we establish a contradiction by showing that the limit $r \to 0$ in \eqref{pf:prop:E:on:P:10} yields a positive value. By \eqref{for:Vext:assns}, $\Vext''(t_0) \geq 0$. By \eqref{for:V:props:lambda} and Step 2, the integral over $B_r(t_0)^c$ is non-negative and increasing along any sequence $r_k \downarrow 0$ for $k$ large enough. 
For the second and the third term in the right-hand side of \eqref{pf:prop:E:on:P:10}, we observe from $\overline \rho \in C^2 (0,1)$ and $\overline \rho (t_0) = 0$ that
\begin{align*}
  \frac{ \overline \rho (t_0 + r) + \overline \rho (t_0 - r) }{ r^2 }
  = \frac{ \overline \rho (t_0 + r) - 2 \overline \rho (t_0) + \overline \rho (t_0 - r) }{ r^2 }
  &\xto{ r\to 0 } \overline \rho'' (t_0), \\  
  \frac{ \overline \rho' (t_0 + r) - \overline \rho' (t_0 - r) }{2r}
  &\xto{ r\to 0 } \overline \rho'' (t_0).
\end{align*}
Since both $r^2 V' (r)$ and $r V(r)$ converge to $0$ as $r \to 0$, we conclude that the second and the third term in the right-hand side of \eqref{pf:prop:E:on:P:10} also converge to $0$ as $r \to 0$. Lastly, since $V \in L^1 (-1,1)$ and $\overline \rho'' \in C(0,1)$, we obtain that the fourth term in the right-hand side of \eqref{pf:prop:E:on:P:10} converges to $0$ as $r \to 0$.

In conclusion, by taking the limit $r \to 0$ in \eqref{pf:prop:E:on:P:10}, we obtain that the right-hand side is positive, and the contradiction is reached. Hence, we conclude that $\overline \rho (t) > 0$ for any $t \in (0,1)$.
\end{proof}

\begin{lem}[Problem \ref{prob:wSIE} has a unique solution] \label{lem:Un:P13}
  Let $a$, $\Vreg$ and $\Vext$ be as in Theorem \ref{t}. Then, Problem \ref{prob:wSIE} has a unique solution.
\end{lem}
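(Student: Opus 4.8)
The existence of a solution to Problem~\ref{prob:wSIE} is already provided by Lemma~\ref{lem:t}, which shows that $(\overline\rho, \overline C)$ solves it; so the plan is to prove uniqueness. I would argue by difference: suppose $(\rho_1, C_1)$ and $(\rho_2, C_2)$ both solve Problem~\ref{prob:wSIE}, and set $\zeta := \rho_1 - \rho_2 \in H_V(0,1)$ (using $H^{-(1-a)/2}(0,1) \cong H_V(0,1)$ from Proposition~\ref{prop:HV}) and $\Delta C := C_1 - C_2$. Since $\Vext$ is common to both equations \eqref{for:prop:E:on:P:EL:strong}, subtracting them yields
\begin{equation*}
  \int_0^1 \zeta = 0
  \qquad \text{and} \qquad
  V * \zeta = \Delta C \quad \text{a.e.~on } (0,1),
\end{equation*}
where $V * \zeta := T^2 \zeta \in L^2(0,1)$ in the sense of Definition~\ref{d:convo}. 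It then suffices to show $\zeta = 0$, since this forces $\Delta C = 0$ through \eqref{for:prop:E:on:P:EL:strong}.

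The key step is to test the relation $V*\zeta = \Delta C$ against $\zeta$ itself via approximation. By \eqref{for:HV:densy:Ccinf} choose $(\varphi_n) \subset C_c^\infty(0,1)$ with $\varphi_n \to \zeta$ in $\|\cdot\|_V$. Writing both factors in Fourier variables as in Corollary~\ref{cor:props:HV}, and using Definition~\ref{d:convo} together with $\supp \varphi_n \subset (0,1)$, I get
\begin{equation*}
  (\zeta, \varphi_n)_V
  = \langle T^2 \zeta, \varphi_n \rangle
  = \int_0^1 (V*\zeta)\, \varphi_n
  = \Delta C \int_0^1 \varphi_n .
\end{equation*}
Passing to the limit $n \to \infty$, the left-hand side tends to $\|\zeta\|_V^2$ by continuity of the inner product, while $\int_0^1 \varphi_n = \langle \varphi_n, \psi\rangle \to \langle \zeta, \psi\rangle = \int_0^1 \zeta = 0$, where $\psi \in C_c^\infty(\R)$ with $\psi|_{(0,1)} = 1$ and the convergence uses continuity of the duality pairing $H^{-(1-a)/2}(\R) \times H^{(1-a)/2}(\R) \to \R$ (again via Proposition~\ref{prop:HV} and \eqref{int01rho}). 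Hence $\|\zeta\|_V^2 = \Delta C \cdot 0 = 0$, so $\zeta = 0$, giving $\rho_1 = \rho_2$ and then $C_1 = C_2$. One may equivalently phrase this as: Problem~\ref{prob:wSIE} is the first-order condition of minimising the strictly convex functional $\tfrac12(\rho,\rho)_V + \langle\Vext,\rho\rangle$ over the closed affine set $\{\rho \in H_V(0,1) : \int_0^1\rho = 1\}$, whose minimiser is unique by the Hilbert-space theory of Theorem~\ref{thm:CoV}.

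The main obstacle is conceptual rather than computational. Because Problem~\ref{prob:wSIE} does \emph{not} require $\rho \geq 0$, a competing solution lives only in $H_V(0,1) = H^{-(1-a)/2}(0,1)$ and need not be a signed measure; this blocks the tempting route of inverting the equation by the explicit Carleman formula (Theorems~\ref{thm:expl:soln:a} and \ref{thm:expl:soln:log}) or of invoking the regularity of $\Vreg * \rho$ (Lemma~\ref{lem:convo:with:psi}), both of which presuppose $\rho \in \mathcal M([0,1])$. The argument above avoids this by remaining entirely within the Hilbert space $H_V(0,1)$. The two points deserving care are the identity $(\zeta, \varphi_n)_V = \int_0^1 (V*\zeta)\varphi_n$ --- which is transparent in Fourier variables, since $\widehat V$ is bounded and $\int_\R \widehat V |\widehat\zeta|^2 = \|\zeta\|_V^2 < \infty$ give $\widehat V \widehat\zeta \in L^2(\R)$, so that $V*\zeta \in L^2(\R)$ --- and the convergence $\int_0^1 \varphi_n \to \int_0^1 \zeta$, which is exactly where the mass constraint $\int_0^1 \rho_i = 1$ enters and must be known to be a continuous functional on $H_V(0,1)$. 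If one prefers to isolate the vanishing of the constant, one first shows $\Delta C = 0$ from the above and then concludes $\zeta = 0$ directly from Lemma~\ref{lem:Vastxi:zero:means:xi:zero}.
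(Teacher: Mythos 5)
Your proof is correct, and it takes a genuinely different route from the paper's. The paper proves uniqueness by introducing the auxiliary energy $\Et(\rho) := \tfrac12\|\rho\|_V^2$ (i.e., $E$ with $\Vext \equiv 0$, which satisfies \eqref{for:V:props}, \eqref{for:Vext:assns} and \eqref{VextV:bd}); Proposition \ref{p:E} and Lemma \ref{lem:t} then supply a probability measure $\rhot$ with $V*\rhot = \tilde C > 0$ constant on $(0,1)$, so that with $\alpha := (C_1 - C_2)/\tilde C$ one gets $V*\bigl(\alpha\rhot - (\rho_1 - \rho_2)\bigr) = 0$ a.e., hence $\alpha\rhot = \rho_1 - \rho_2$ by Lemma \ref{lem:Vastxi:zero:means:xi:zero}, and integrating both sides forces $\alpha = 0$, i.e.\ $C_1 = C_2$ and then $\rho_1 = \rho_2$. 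You avoid the auxiliary problem altogether: your identity $(\zeta,\varphi_n)_V = \int_0^1 (V*\zeta)\varphi_n$ is precisely the computation inside the paper's proof of Lemma \ref{lem:Vastxi:zero:means:xi:zero}, which you rerun with the nonzero constant right-hand side $\Delta C$, and the one genuinely new ingredient is that the mass functional $\rho \mapsto \int_0^1 \rho$ of \eqref{int01rho} is continuous for $\|\cdot\|_V$ (via Proposition \ref{prop:HV} and Cauchy--Schwarz in Fourier variables against a cutoff $\psi \in C_c^\infty(\R)$ with $\psi|_{(0,1)} = 1$) --- this is exactly where the constraint $\int_0^1 \rho_i = 1$ enters, and your justification that $V*\zeta \in L^2(\R)$ from the upper bound in Lemma \ref{lem:props:V}.\ref{lem:props:V:Vhat:bounds} is sound. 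The trade-off: your argument is lighter and more self-contained, needing only the Hilbert-space material of Section \ref{ssec:Vip} (Corollary \ref{cor:props:HV}, Proposition \ref{prop:HV}) and none of the regularity machinery, whereas the paper's detour through $\rhot$ recycles results already established and yields a piece of structural information for free, namely that the solution set of $V*\rho = \mathrm{const}$ in $H_V(0,1)$ is exactly the line $\R\rhot$, which is what makes its scaling trick work. Your closing caution that a competing solution need not be a (signed) measure --- so that Carleman inversion and Lemma \ref{lem:convo:with:psi} are unavailable --- is exactly right, and it is the same reason the paper's proof also remains entirely within $H_V(0,1)$ and Lemma \ref{lem:Vastxi:zero:means:xi:zero}.
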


\begin{proof}
The existence is covered by Lemma \ref{lem:t}. To show the uniqueness, let $(\rho_i, C_i)$ for $i = 1, 2$ be two solutions to Problem \ref{prob:wSIE}. Then 
\begin{equation} \label{pf:prop:E:on:P:7}
  V*(\rho_1 - \rho_2) = C_1 - C_2 \quad \text{a.e.~on } (0,1).
\end{equation}
To show that $\rho_1 = \rho_2$, we consider the auxiliary energy $\Et (\rho) := \tfrac12 \| \rho \|_V^2$. We observe that $\Et$ is of the same form as \eqref{for:defn:E}, and that the assumptions \eqref{for:V:props}, \eqref{for:Vext:assns} and \eqref{VextV:bd} are all satisfied. Hence, Proposition \ref{p:E} provides the unique minimiser $\rhot \in \mathcal P ([0,1]) \cap H_V (0,1)$ of $\Et$, and Lemma \ref{lem:t} implies that $V * \rhot = \tilde C$ on $(0,1)$ where $\tilde C > 0$. Setting $\alpha := (C_1 - C_2)/\tilde C$, we obtain that $V * (\alpha \rhot) = C_1 - C_2$ on $(0,1)$. Then, by \eqref{pf:prop:E:on:P:7} and Lemma \ref{lem:Vastxi:zero:means:xi:zero}, we find that $\alpha \rhot = \rho_1 - \rho_2$. Hence $\alpha = \int_0^1 \alpha \, d \tilde \rho = \int_0^1 d(\rho_1 - \rho_2) = 0$, and thus $C_1 = C_2$. By \eqref{pf:prop:E:on:P:7} and Lemma \ref{lem:Vastxi:zero:means:xi:zero} we then also have $\rho_1 = \rho_2$.
\end{proof}

\section{The extended version of Theorem \ref{t:R}: statement and proof}
\label{s:FBVP}

In this section we state (in Section \ref{ss:FBVP:R}) and prove (in Section \ref{ss:pf:R}) the extended version of Theorem \ref{t:R} given by Theorem \ref{t:R:extended}. Since we will often translate between Cases 1 and 2 (see Sections \ref{ss:Case1} and \ref{ss:extn:R}) by an affine change of variables, we alter the notation in Case 2. Instead of \eqref{for:defn:ER}, we set 
\begin{equation} \label{for:defn:ERt}
   \Et (\rho) = \frac12 \iint_{\R \times \R} \Vt (\tilde t - \tilde s) \, d (\rho \otimes \rho) (\tilde s, \tilde t) + \int_\R \Vextt (\tilde t) \, d\rho (\tilde t), 
\end{equation}
and denote its minimiser by $\rhot$.

\subsection{Theorem \ref{t:R:extended}: the extended version of Theorem \ref{t:R}}
\label{ss:FBVP:R}

Next we state the counterpart of Problems \ref{prob:minz}--\ref{prob:wSIE} in the setting on $\R$. For given $0 \leq a < 1$, let $\Vt$, $\Vregt$ and $\Vextt$ be as in \eqref{for:V:on:R:props} and \eqref{for:Vext:on:R:assns}. 

\begin{prob}[Minimisation] \label{prob:minz:R}
  Find the minimiser of $\Et$ (defined in \eqref{for:defn:ERt}) in $\mathcal P (\R)$.
\end{prob}

We note that we cannot construct the space $H_{\Vt} (\R)$ as in \S \ref{ssec:Vip}, because the compact support condition in \eqref{for:V:props:R} is not satisfied. We will side-step the construction of $H_{\Vt} (\R)$ by first showing that the solution $\rhot$ to Problem \ref{prob:minz:R} has finite support. This property allows us to modify the tails of $\Vt$ such that \eqref{for:V:props:R} is satisfied without losing the minimality of $\rhot$. Then, by Proposition \ref{prop:HV} we can identify the related Hilbert space by $H^{-(1-a)/2} (\R)$, which does not depend on the choice of the regularisation of the tails of $\Vt$. This motivates the following problem:

\begin{prob}[Variational inequality] \label{prob:VI:R}
  Find $\rho \in \mathcal P (\R)$ such that
  \begin{equation} \label{for:thm:R:EL:weak}
  \int_{\R} \tilde h_{\rho} \, d \mu 
  \geq \int_{\R} \tilde h_{\rho} d \rho
  \quad \text{for all } \mu \in \mathcal P (\R) \cap H^{-(1-a)/2} (\R),
\end{equation}
where $\tilde h_\rho$ is as in \eqref{hrho:t}.
\end{prob}       

To state Problem \ref{prob:wSIE:R}, we recall \eqref{int01rho} for the extension of the integral to distributions $\rho \in H^{-(1-a)/2} (t_1, t_2)$, Definition \ref{d:convo} for the definition of $\tilde h_\rho$ for such $\rho$, and that $f(t \pm)$ denote the one-sided limits of $f$ at $t$.

\begin{prob}[Weakly singular integral equation with free boundary] \label{prob:wSIE:R}
Find the solution \\
$(\rho, t_1, t_2, C)$ with $C \in \R$, $-\infty < t_1 < t_2 < \infty$, $\rho \in H^{-(1-a)/2} (t_1, t_2)$ and $\int_{t_1}^{t_2} \rho = 1$ to
\begin{equation} \label{for:FBVP}
  \left\{ \begin{aligned}
     &\tilde h_{\rho} (\tilde t) = C
     \quad \text{a.e.~on } (t_1, t_2), \\
     &\tilde h_{\rho}' (t_1-) \leq 0 \leq \tilde h_{\rho}' (t_2+).
  \end{aligned} \right.
\end{equation}  
\end{prob}

We note that it is not restrictive to assume the support of $\rhot$ in Problem \ref{prob:wSIE:R} to be finite. Indeed, if $\rhot \in H^{-(1-a)/2} (\R)$ satisfies \eqref{for:FBVP} (without the boundary conditions), then we observe from the following inclusion of levelsets,
$$
  \supp \rho 
  \subset \overline{ \{ \tilde h_\rho = C \} }
  \subset \overline{ \{\Vextt \leq C\} }, 
$$
that $\supp \rho$ is bounded due to the linear growth of $\Vextt$ (see \eqref{for:Vext:on:R:assns}).

Since we prove the solution $\rhot$ of Problem \ref{prob:wSIE:R} to be in $C_0^\beta ([t_1, t_2])$ for some $\beta > 0$, we can also seek classical solutions to Problem \ref{prob:wSIE:R}. This solution concept coincides with that in \cite{Muskhelishvili53}. The benefit of working with classical solutions is that the boundary conditions in \eqref{for:FBVP} turn into homogeneous Dirichlet boundary conditions (which are already included in the H\"older space $C_0^\beta ([t_1, t_2])$). 

\begin{prob}[Classical weakly singular integral equation with free boundary] \label{prob:sSIE:R}
Find the solution $(\rho, t_1, t_2, C)$ with $C \in \R$, $-\infty < t_1 < t_2 < \infty$, $\rho \in C_0^\beta ([t_1, t_2])$ for some $\beta > 0$ and $\int_{t_1}^{t_2} \rho = 1$ to
\begin{equation} \label{for:FBVP:classical}
  \tilde h_{\rho} (\tilde t) = C
     \quad \text{on } [t_1, t_2].
\end{equation}  
\end{prob}

With Problems \ref{prob:minz:R}--\ref{prob:sSIE:R} being defined, we are finally ready to state the extended version of Theorem \ref{t:R}:

\begin{thm}[Equivalence of Problems \ref{prob:minz:R}--\ref{prob:sSIE:R} and properties of the solution] \label{t:R:extended}
Let $a$, $\Vt$, \\ $\Vregt$, $\Vextt$, $\plog$ and $\ell$ be as in Theorem \ref{t:R}. Then, all four Problems \ref{prob:minz:R}--\ref{prob:sSIE:R} have a unique solution, and all these solutions are equal. The solution $(\rhot, t_1, t_2, \Ct)$ to these problems satisfies all statements listed in Theorem \ref{t:R}. 
Moreover, $(\rhot, t_1, t_2, \Ct)$ satisfies the implicit relation \eqref{IF:R:impl}.
\end{thm}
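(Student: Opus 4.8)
The plan is to prove Theorem~\ref{t:R:extended} by reducing Case~2 to Case~1 via an affine change of variables and then upgrading the characterisations to account for the free boundary. I would proceed in four stages, mirroring the outline given after Theorem~\ref{t:R}.

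\textbf{Stage 1: existence, uniqueness and compact support of the minimiser.} First I would establish that Problem~\ref{prob:minz:R} has a unique minimiser $\rhot \in \mathcal P(\R)$. Existence follows from the direct method: the linear growth bound $\Vextt(\tilde t) \geq c(|\tilde t| - C)$ in \eqref{for:Vext:on:R:assns} provides tightness of minimising sequences, while lower semi-continuity of $\Et$ follows from the lower semi-continuity of $\Vt$ and $\Vextt$ (via a monotone approximation as in the footnote to \eqref{for:defn:E}). Uniqueness follows from strict convexity of the interaction term, which is guaranteed because $\widehat{\Vt} > 0$ (the analogue of Lemma~\ref{lem:props:V}.\ref{lem:props:V:Vhat:bounds} on $\R$, using $\Vt \in L^1(\R)$ and convexity). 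The key structural step is then to show $\supp \rhot =: [t_1,t_2]$ is a bounded interval. Boundedness comes from the levelset inclusion $\supp\rhot \subset \overline{\{\tilde h_{\rhot} = \Ct\}} \subset \overline{\{\Vextt \leq \Ct\}}$, which is compact by the linear growth of $\Vextt$; that the support is connected (an interval) follows from the convexity argument of Steps~1--2 in the proof of Lemma~\ref{lem:t}, applied verbatim on any gap of the support.

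\textbf{Stage 2: transfer to Case~1 and interior regularity.} Once $\supp\rhot = [t_1,t_2]$ is known to be a finite interval, I would modify the tails of $\Vt$ outside $[-(t_2-t_1), t_2-t_1]$ to obtain a compactly supported even convex potential that agrees with $\Vt$ on the relevant range and still satisfies \eqref{for:V:props:R}; by Proposition~\ref{prop:HV} the induced Hilbert space is $H^{-(1-a)/2}(\R)$, independent of this modification. Applying the affine map sending $[t_1,t_2]$ to $[0,1]$, the problem falls into the framework of Theorem~\ref{t}, except that the artificial condition \eqref{VextV:bd} need not hold. Nevertheless, Steps~3--7 of the proof of Lemma~\ref{lem:t} are \emph{local} on $(t_1,t_2)$: the representation $\rhot = \mathcal C_a f_{\rhot}$ via Theorems~\ref{thm:expl:soln:a}--\ref{thm:expl:soln:log}, the interior Sobolev regularity \eqref{for:thm:R:regy}, and the positivity for $\ell \geq 3$ all carry over unchanged, yielding the $W^{\ell,p}_{\operatorname{loc}}$ and $C^a_0$ (resp.\ $C^{\alpha_0}_0$) statements. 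The equivalence of Problems~\ref{prob:minz:R} and \ref{prob:VI:R} follows from Theorem~\ref{thm:CoV} exactly as in Proposition~\ref{p:E}.

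\textbf{Stage 3: the free-boundary conditions and the endpoint expansion.} The genuinely new content is the boundary behaviour \eqref{for:thm:R:expa} and the inequalities $\tilde h_{\rhot}'(t_1-) \leq 0 \leq \tilde h_{\rhot}'(t_2+)$ in \eqref{for:FBVP}. Here I would use the variational inequality \eqref{for:thm:R:EL:weak}, which (unlike in Case~1) gives information \emph{outside} $[t_1,t_2]$: since $\tilde h_{\rhot} \geq \Ct$ on $\R$ with equality on $[t_1,t_2]$, and $\tilde h_{\rhot}$ is continuous and $C^1$ near $t_i$ (because $\rhot$ has compact support and $\Vt', \Vextt'$ are regular), the one-sided derivatives must satisfy the stated sign conditions. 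To show the constants $C_i$ in \eqref{for:prop:E:on:P:regy:Hol} vanish — i.e.\ $\rhot$ is H\"older continuous rather than blowing up at $t_i$ — I would argue that a genuine $|t-t_i|^{-(1-a)/2}$ singularity is incompatible with $\tilde h_{\rhot}$ being bounded (equal to $\Ct$) up to the boundary together with the free-boundary optimality; this is the point where the absence of a hard barrier is essential. For the refined expansion \eqref{for:thm:R:expa}, assuming $\Vextt \in C^2$ (resp.\ $C^{2,\alpha_0}$), I would apply Proposition~\ref{prop:Ca} and the boundary expansions in Proposition~\ref{prop:props:S:Hol}.\ref{prop:props:S:Hol:bdy:0}--\ref{prop:props:S:Hol:bdy:1} to the explicit formula $\rhot = \mathcal C_a f_{\rhot}$, extracting the leading $|t-t_i|^{(1+a)/2}$ term and controlling the remainder $R_i$ in the asserted weighted H\"older class.

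\textbf{Stage 4: equivalence with Problems~\ref{prob:wSIE:R} and \ref{prob:sSIE:R}, and the implicit relation.} Finally I would close the loop by showing any solution of Problem~\ref{prob:wSIE:R} coincides with $\rhot$: given $(\rho, t_1, t_2, C)$ solving \eqref{for:FBVP}, the interior equation plus the boundary sign conditions let one reconstruct the variational inequality \eqref{for:thm:R:EL:weak}, whence uniqueness from Stage~1. The classical version Problem~\ref{prob:sSIE:R} is equivalent because the solution is shown to lie in $C_0^\beta([t_1,t_2])$, upgrading the weak boundary conditions to homogeneous Dirichlet conditions. The implicit relation \eqref{IF:R:impl} is just the assembled formula $\rhot = \mathcal C_a f_{\rhot}$ together with the two equations pinning down $t_1, t_2$ (the free-boundary conditions forcing $C_i = 0$) and the normalisation $\int_{t_1}^{t_2}\rhot = 1$.

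\emph{Main obstacle.} I expect Stage~3 to be the crux: proving that the would-be endpoint singularities $C_i$ vanish and rigorously justifying the one-sided derivative conditions requires carefully combining the global lower bound on $\tilde h_{\rhot}$ from the variational inequality with the local Carleman representation, and the refined second-order expansion \eqref{for:thm:R:expa} demands delicate bookkeeping of the remainder terms through the operators $S$, $D^{1-a}$ and the weights $\phi_a$. Everything else is either a direct transcription of Lemma~\ref{lem:t} or a standard compactness/convexity argument.
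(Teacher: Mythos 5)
Your overall architecture coincides with the paper's own three-part decomposition: existence, uniqueness and compact support of the minimiser (the paper's Proposition~\ref{prop:P61:R}), affine transfer of $[t_1,t_2]$ to $[0,1]$ so that Theorem~\ref{t} can be applied (Steps~1--2 of Lemma~\ref{lem:t:R}), the variational inequality on $\R$ yielding the boundary sign conditions, the vanishing of the endpoint constants and the bootstrap for \eqref{for:thm:R:expa} (Steps~3--5), and finally uniqueness of the remaining problems by reconstructing the variational inequality (Lemma~\ref{lem:Un:PR}). However, two steps of your plan would fail as written. First, in Stage~2 you assert that once \eqref{VextV:bd} is dropped, Steps~3--7 of Lemma~\ref{lem:t} ``carry over unchanged'' and that Steps~1--2 apply ``verbatim''. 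They do not: \eqref{VextV:bd} is used exactly once, in \eqref{pf:prop:E:on:P:43}, to exclude components of $A=\{h_{\overline\rho}>m\}$ touching $t=0$ or $t=1$, and without a substitute the derivation of $h_{\overline\rho}=C$ a.e.\ --- the very input to the Carleman representation and everything downstream --- is incomplete. The paper repairs this with Lemma~\ref{l:t:extn}: since $t_1=\min(\supp\rhot)$ and $t_2=\max(\supp\rhot)$, the transferred minimiser satisfies $\{0,1\}\subset\supp\overline\rho$, and since it has no atoms, any component $(0,s)$ or $(r,1)$ of $A$ disjoint from the support would nevertheless carry positive mass, a contradiction with \eqref{pf:prop:E:on:P:2}. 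You have the ingredients in hand (you define $t_1,t_2$ as min/max of the support) but never make this argument, and it is needed, not optional.

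Second, and more seriously, Stage~4 has a genuine hole: Problems~\ref{prob:wSIE:R} and \ref{prob:sSIE:R} impose \emph{no sign condition} on $\rho$, so from the data $(\rho,t_1,t_2,C)$ alone you cannot ``reconstruct the variational inequality''. Propagating the boundary sign conditions to $\tilde h_\rho\geq C$ on all of $\R$ requires convexity of $\tilde h_\rho$ off $[t_1,t_2]$, i.e.\ $\tilde h_\rho''=\Vt''*\rho+\Vextt''\geq 0$, which presupposes $\rho\geq 0$; and Problem~\ref{prob:VI:R} anyway demands $\rho\in\mathcal P(\R)$. The paper obtains positivity by transferring to $[0,1]$ and invoking the uniqueness of the Case-1 Problem~\ref{prob:wSIE} (Lemma~\ref{lem:Un:P13}, whose proof itself needs the auxiliary energy $\tfrac12\|\rho\|_V^2$), which identifies $\rho$ with the non-negative Case-1 minimiser; your plan never invokes this mechanism. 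Relatedly, your appeal to Theorem~\ref{thm:CoV} ``exactly as in Proposition~\ref{p:E}'' to obtain \eqref{for:thm:R:EL:weak} on all of $\R$ glosses over the fact that $H_{\Vt}(\R)$ is not available ($\Vt$ fails the compact-support condition \eqref{for:V:props:R}): the paper instead derives the inequality on an enlarged interval $(r_1,r_2)$ chosen via \eqref{choice:s1s2} and extends it outside using the growth of $\Vextt$, and its uniqueness proof for Problem~\ref{prob:VI:R} requires the monotone truncations $\Vt_k$. All of these defects are repairable with the paper's own devices, but as proposed these steps do not go through.
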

 
\subsection{Proof of Theorem \ref{t:R:extended}} 
\label{ss:pf:R} 
 
The proof of Theorem \ref{t:R:extended} is divided in 3 parts:
\begin{enumerate}
  \item Problem \ref{prob:minz:R} has a unique solution $\rhot$, which turns out to have bounded support and no atoms (Proposition \ref{prop:P61:R});
  \item Theorem \ref{t:R} holds, and $\rhot$ is a solution to Problems \ref{prob:VI:R}--\ref{prob:sSIE:R} and satisfies an implicit relation similar to \eqref{soln:form:Abs} (Lemma \ref{lem:t:R});
  \item Problems \ref{prob:VI:R}--\ref{prob:sSIE:R} have a unique solution (Lemma \ref{lem:Un:PR}).
\end{enumerate}

The proof of Proposition \ref{prop:P61:R} is quite standard; its result holds for much weaker assumptions on $\Vregt$ and $\Vextt$ than those given by \eqref{for:V:on:R:props} and \eqref{for:Vext:on:R:assns}. Lemma \ref{lem:Un:PR} is proven at the end, because it relies on Lemma \ref{lem:t:R}.

The proof of the main part, Lemma \ref{lem:t:R}, starts from Proposition \ref{prop:P61:R}, which provides the unique solution $\rhot \in \mathcal P (\R)$ to Problem \ref{prob:minz:R}. In five subsequent steps, $\rhot$ is proven to satisfy all properties listed in Theorem \ref{t:R}. In Step 1, we use that $\supp \rhot$ is bounded with endpoints $-\infty < t_1 < t_2 < \infty$ to construct an affine mapping $\mathcal T$ such that the minimum and maximum of $\supp (\mathcal T_\# \rhot)$ are $0$ and $1$. In a separate lemma (Lemma \ref{l:t:extn}), we show that this property is a sufficient substitute for the assumption in \eqref{VextV:bd} under which Theorem \ref{t} applies to $\overline \rho := \mathcal T_\# \rhot$ for shifted versions of $\Vregt$ and $\Vextt$.  

In Step 2, the main step, we apply Theorem \ref{t} to $\overline \rho$. The requirements to apply this Theorem are the main motivation for assumptions \eqref{for:V:on:R:props} and \eqref{for:Vext:on:R:assns}. Then, pulling $\overline \rho$ back along $\mathcal T$, we obtain from Theorem \ref{t} most of the regularity properties stated in Theorem \ref{t:R} on $\overline \rho$. The properties which require further motivation, are that \eqref{for:thm:R:EL:weak} holds on $\R$ instead of $[t_1, t_2]$ (Step 3), the boundary conditions in Problems \ref{prob:wSIE:R} and \ref{prob:sSIE:R} (Steps 4 and 5), and the further expansion of $\rhot$ around $t_1$ and $t_2$ (Step 5). 
\smallskip

We proceed to carry out the plan of the proof. We establish part 1 for weaker assumptions on $\Vt$ and $\Vextt$:

\begin{prop}[Problem \ref{prob:minz:R} has a unique solution] \label{prop:P61:R}
Let $\Vt \in L^1(\R)$ be such that $\Vt$ is even, convex outside of $0$ and $\liminf_{\tilde t \to 0} V(\tilde t) = \infty$. Let $\Vextt \in C(\R)$ satisfy the growth condition $\liminf_{\tilde t \to \pm \infty} \Vextt(\tilde t) = \infty$. Then, Problem \ref{prob:minz:R} has a unique solution $\rhot \in \mathcal P(\R)$. Moreover, $\supp \rhot$ is bounded, and $\rhot$ has no atoms.
\end{prop}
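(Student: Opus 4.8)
The plan is to run the direct method of the calculus of variations for existence, to derive uniqueness from strict convexity of $\Et$, and to obtain the two structural claims (no atoms, bounded support) from, respectively, the singularity of $\Vt$ at the origin and the first-variation inequality combined with the coercivity of $\Vextt$.

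\textbf{Existence.} First I would record that $\Et$ is bounded below: since $\Vt$ is even, convex on $(0,\infty)$ and in $L^1(\R)$, it is non-increasing with limit $0$ at infinity, hence $\Vt\ge0$; the interaction term is therefore $[0,\infty]$-valued and well defined (by lower semicontinuity and nonnegativity of $\Vt$, as in the footnote after \eqref{for:defn:E:first}), while $\Vextt$ is bounded below by continuity and coercivity. Along a minimising sequence $(\rho_n)$ the bound $\Et(\rho_n)\le C$ and nonnegativity of the interaction force $\int_\R\Vextt\,d\rho_n\le C'$; since $\Vextt(t)\to\infty$ as $|t|\to\infty$, a Chebyshev estimate gives $\rho_n(\{|t|>R\})\le (C'-\inf\Vextt)/\inf_{|t|>R}\Vextt\to0$, i.e.\ uniform tightness. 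By Prokhorov's theorem a subsequence satisfies $\rho_n\rightharpoonup\rho$ narrowly, whence $\rho_n\otimes\rho_n\rightharpoonup\rho\otimes\rho$. Lower semicontinuity of both terms under narrow convergence (for the interaction term, write $\Vt=\sup_k V_k$ with $V_k\in C_b(\R)$ increasing, as in Lemma~\ref{lem:props:V}.\ref{lem:props:V:V_k}, and pass to the limit by monotone convergence; for the confining term use that $\Vextt$ is continuous and bounded below) yields $\Et(\rho)\le\liminf_n\Et(\rho_n)$, so $\rho=:\rhot$ is a minimiser.

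\textbf{No atoms and uniqueness.} A bounded-support competitor such as $\mathcal L|_{[0,1]}$ has finite energy ($\Vt\in L^1$, $\Vextt$ continuous), so $\Et(\rhot)<\infty$. If $\rhot$ had an atom, the diagonal $\{s=t\}$ would carry positive $\rhot\otimes\rhot$-mass while $\Vt=+\infty$ there, forcing an infinite interaction term; hence $\rhot$ has no atoms. For uniqueness I would use strict convexity of $\Et$ on $\mathcal P(\R)$: for two finite-energy measures the map $\theta\mapsto\Et((1-\theta)\rho_0+\theta\rho_1)$ is a quadratic whose second-order coefficient is $\iint\Vt\,d(\nu\otimes\nu)$ with $\nu:=\rho_1-\rho_0$, and the Fourier identity $\iint\Vt\,d(\nu\otimes\nu)=\int_\R\widehat{\Vt}\,|\widehat\nu|^2$ (valid since $\Vt\in L^1$ makes $\widehat{\Vt}$ bounded and continuous) is strictly positive for $\nu\neq0$ because $\widehat{\Vt}\ge0$ vanishes only on a Lebesgue-null set; this is the convexity-based positivity behind \eqref{Vhat:nonneg} and Lemma~\ref{lem:props:V}.\ref{lem:props:V:Vhat:bounds}. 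Thus the minimiser is unique.

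\textbf{Bounded support.} This is the main step. Writing $\tilde h_\rho:=\Vt*\rho+\Vextt$ and $C:=\int_\R\tilde h_{\rhot}\,d\rhot$, I would derive the Euler--Lagrange inequality by expanding $\phi(\varepsilon):=\Et((1-\varepsilon)\rhot+\varepsilon\mu)$ for any finite-energy $\mu\in\mathcal P(\R)$: $\phi$ is quadratic in $\varepsilon$ with $\phi'(0^+)=\int_\R\tilde h_{\rhot}\,d\mu-C\ge0$, so
\begin{equation*}
  \int_\R\tilde h_{\rhot}\,d\mu\ge C\qquad\text{for every finite-energy }\mu\in\mathcal P(\R),
\end{equation*}
the finiteness of $\int\tilde h_{\rhot}\,d\mu$ following from Cauchy--Schwarz for the positive semidefinite kernel. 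Since $\Vt\ge0$ gives $\Vt*\rhot\ge0$, we have $\tilde h_{\rhot}\ge\Vextt$, and coercivity makes $A:=\{\Vextt\le C\}$ bounded. Testing the inequality with $\mu:=\rhot|_A/\rhot(A)$ (finite energy, and $\rhot(A)>0$ because $\int\Vextt\,d\rhot\le C$) gives $\int_A\tilde h_{\rhot}\,d\rhot\ge C\,\rhot(A)$; since $\tilde h_{\rhot}>C$ on $A^c$ and $C=\int_A\tilde h_{\rhot}\,d\rhot+\int_{A^c}\tilde h_{\rhot}\,d\rhot\ge C\rhot(A)+C\rhot(A^c)=C$, equality forces $\rhot(A^c)=0$. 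Hence $\supp\rhot\subseteq\overline A$ is bounded.

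\textbf{Main obstacle.} The conceptual steps (existence, no atoms) are routine; the two places requiring care are the strict positivity of $\widehat{\Vt}$ underpinning uniqueness (and the Cauchy--Schwarz used in the Euler--Lagrange step), and the localisation of $\supp\rhot$. The clean point for the latter is that the first-variation inequality together with $\tilde h_{\rhot}\ge\Vextt$ confines the support to the sublevel set $\{\Vextt\le C\}$ without any monotonicity assumption on $\Vextt$, so that the coercivity of $\Vextt$ alone suffices.
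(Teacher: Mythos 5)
Your proposal is correct, and for existence, uniqueness and the absence of atoms it follows essentially the paper's route: direct method with tightness from the coercivity of $\Vextt$ and lower semi-continuity of $\Vt$; strict convexity of $\Et$ from positivity of $\mathcal F \Vt$; and the singularity of $\Vt$ at $0$ ruling out atoms. The genuinely different step is the boundedness of $\supp \rhot$: the paper dispenses with it in one line by citing an energy estimate from \cite[\S 2.2]{MoraRondiScardia16ArXiv} (a comparison/truncation argument that avoids first variations), whereas you derive the first-variation inequality $\int_\R \tilde h_{\rhot} \, d\mu \geq C$ for finite-energy competitors and then confine the support to the closed sublevel set $\{\Vextt \leq C\}$ by testing with $\rhot|_A/\rhot(A)$. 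Your argument is self-contained, uses only the stated hypotheses (continuity and coercivity of $\Vextt$, no convexity or monotonicity), and as a by-product yields the variational inequality that the paper only establishes later (Problem \ref{prob:VI:R} via Lemma \ref{lem:t:R}), so it buys more at the cost of the Cauchy--Schwarz justification for the cross term. Two minor points of rigour, at the same level of terseness as the paper itself: you claim only that $\widehat{\Vt} \geq 0$ vanishes on a null set, whereas the computation \eqref{Vhat:nonneg} in fact gives $\widehat{\Vt} > 0$ everywhere (convexity together with the blow-up at $0$ and $\Vt \in L^1(\R)$ force $\Vt'' \not\equiv 0$); your weaker claim still suffices because $\widehat \nu$ is continuous and $\not\equiv 0$ for $\nu \neq 0$, but that should be said. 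Similarly, the identity $\iint \Vt \, d(\nu \otimes \nu) = \int_\R \widehat{\Vt} \, |\widehat \nu|^2$ for signed measures requires more than boundedness of $\widehat{\Vt}$ — it should be justified by the monotone approximation of Lemma \ref{lem:props:V}.\ref{lem:props:V:V_k}, as in the proof of Lemma \ref{l:convo:consy}; the paper's own proof has the same gap, since it invokes the inner product of Lemma \ref{l:Vip:L2} on $L^2(\R)$ while the competitors are measures.
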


\begin{proof}
We observe that $\Vt \geq 0$. Then, we infer that $\Et$ is well-defined on $\mathcal P(\R)$ with values in $[\min_\R \Vextt, \infty]$. Because of the growth condition on $\Vextt$, any minimising sequence is tight, and therefore weakly converging along a subsequence to some $\rhot \in \mathcal P(\R)$. Since $\Vt$ is lower semi-continuous, $\rhot$ is a minimiser of $\Et$.

We prove uniqueness of the minimiser by relying on the claim that $\Et$ is strictly convexity. The boundedness of  $\supp \rhot$ follows from a straightforward energy estimate that relies on the growth of $\Vextt$ (see, e.g., \cite[\S 2.2]{MoraRondiScardia16ArXiv}), and the absence of atoms is guaranteed by the singularity of $\Vt$ at $0$.

Finally, we prove that $\Et$ is strictly convex. Repeating the computation in \eqref{Vhat:nonneg}, we obtain that $\mathcal F \Vt > 0$ on $\R$. Then, repeating the proof of Lemma \ref{l:Vip:L2}, we obtain that $(f,g)_{\Vt} := \int_{\R} (\Vt*f)g$ defines an inner product on $L^2(\R)$. From this, we conclude that $\Et$ is strictly convex. 
\end{proof}

We proceed with the second part of the proof of Theorem \ref{t:R:extended}. In preparation for Lemma \ref{lem:t:R}, we prove the following lemma.

\begin{lem}[Alternative assumption for Theorem \ref{t}] \label{l:t:extn}
Given the setting of Proposition \ref{p:E}, let $\overline \rho$ be the solution to Problems \ref{prob:minz} and \ref{prob:VI}. If $\{0,1\} \subset \supp \overline \rho$, then Theorem \ref{t} also holds when \eqref{VextV:bd} is not satisfied.
\end{lem}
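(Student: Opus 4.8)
The plan is to re-run the proof of Lemma~\ref{lem:t}, isolate the single point at which \eqref{VextV:bd} is used, and replace it by the hypothesis $\{0,1\} \subset \supp \overline \rho$. I would first note that everything in the setting of Proposition~\ref{p:E} that does not pass through Lemma~\ref{lem:t} is already free of \eqref{VextV:bd}: the existence and uniqueness of $\overline \rho$ as the common solution of Problems~\ref{prob:minz} and \ref{prob:VI} is given by Proposition~\ref{p:E} itself, and the uniqueness for Problem~\ref{prob:wSIE} in Lemma~\ref{lem:Un:P13} only applies Lemma~\ref{lem:t} to the auxiliary energy $\Et(\rho) = \tfrac12 \|\rho\|_V^2$, for which \eqref{VextV:bd} reduces to $0 \le |V'(1)|$ and thus holds automatically. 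It therefore suffices to recover the conclusions of Lemma~\ref{lem:t} — that $\overline \rho$ solves Problem~\ref{prob:wSIE}, that $\supp \overline \rho = [0,1]$, and all the regularity and positivity statements — from $\{0,1\} \subset \supp \overline \rho$ in place of \eqref{VextV:bd}.

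Inspecting the proof of Lemma~\ref{lem:t}, assumption \eqref{VextV:bd} enters exactly once, in Step~1, case~3: there a component $(r,s)$ of the superlevel set $A = \{ h_{\overline \rho} > m \}$ reaches the boundary, say $r = 0$ and $s < 1$ (the case $r > 0$, $s = 1$ being symmetric), and the estimate \eqref{pf:prop:E:on:P:43} invokes \eqref{VextV:bd} to force $h_{\overline \rho}' > 0$ near $s$. I claim that under the present hypothesis this case cannot occur at all. Recall that in Step~1 one may assume $\overline \rho((r,s)) = 0$, since otherwise \eqref{pf:prop:E:on:P:2} already produces the contradiction. With $r = 0$ this gives $\overline \rho((0,s)) = 0$, and because $\overline \rho \in H_V(0,1) \cap \mathcal P([0,1])$ carries no atoms we also have $\overline \rho(\{0\}) = 0$, hence $\overline \rho([0,s)) = 0$. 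But then every relative neighbourhood $[0,\varepsilon)$ of $0$ with $\varepsilon \le s$ has zero mass, so $0 \notin \supp \overline \rho$, contradicting $0 \in \supp \overline \rho$. The symmetric case $s = 1$ is ruled out identically using $1 \in \supp \overline \rho$. Thus case~3 is vacuous and Step~1 concludes that $h_{\overline \rho} = C$ a.e.\ on $(0,1)$.

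With the sole occurrence of \eqref{VextV:bd} removed, the remaining Steps~2–7 of Lemma~\ref{lem:t} rely only on \eqref{for:V:props} and \eqref{for:Vext:assns}; in particular Step~2 derives $\supp \overline \rho = [0,1]$ from the convexity of $h_{\overline \rho}$ on any gap together with the singularity bound \eqref{for:V:props:lambda}, and Steps~3–7 (Carleman's representation, the Sobolev and H\"older regularity, and positivity for $\ell \ge 3$) are purely local. These steps therefore go through verbatim, which yields all assertions of Theorem~\ref{t} for $\overline \rho$; the equivalence and uniqueness of Problems~\ref{prob:minz}–\ref{prob:wSIE} then follow from Proposition~\ref{p:E} and Lemma~\ref{lem:Un:P13} as above.

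The main (and essentially only) difficulty is the bookkeeping: one must verify carefully that \eqref{VextV:bd} is confined to Step~1, case~3 and is not tacitly used elsewhere — notably in Step~2, which is phrased as ``a small modification of the argument in Step~1'' and whose treatment of boundary gaps must be checked to depend only on \eqref{for:V:props:lambda} and the constancy established in Step~1. Once this audit is complete, substituting the no-atom argument for the estimate \eqref{pf:prop:E:on:P:43} is immediate.
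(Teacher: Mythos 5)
Your proposal is correct and takes essentially the same approach as the paper: both isolate \eqref{VextV:bd} to the boundary case \eqref{pf:prop:E:on:P:43} in Step 1 of the proof of Lemma \ref{lem:t}, and replace it by the observation that, since $\overline \rho$ has no atoms and $\{0,1\} \subset \supp \overline \rho$, a component $(r,s)$ of $A$ with $r=0$ or $s=1$ must carry positive mass, so the contradiction via \eqref{pf:prop:E:on:P:2} is immediate (your contrapositive phrasing, ruling out $\overline\rho((r,s))=0$, is logically the same step). Your extra bookkeeping, checking that Lemma \ref{lem:Un:P13} invokes Lemma \ref{lem:t} only for the auxiliary energy where \eqref{VextV:bd} holds trivially, is sound and slightly more explicit than the paper's proof.
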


\begin{proof}
We note that \eqref{VextV:bd} is solely used in \eqref{pf:prop:E:on:P:43} in Step 1 of the proof of Theorem \ref{t}, which treats those intervals $(r,s) \subset A = \{ h_{\overline \rho} > m \}$ in Step 1 for which either $r = 0$ or $s = 1$. Since $\overline \rho$ contains no atoms and $\{0,1\} \in \supp \overline \rho$, we obtain that $\overline \rho ((r,s)) > 0$ if $r = 0$ or $s = 1$. This contradicts with \eqref{pf:prop:E:on:P:2}, and thus the argument in the third out of the three cases containing \eqref{pf:prop:E:on:P:43} can be omitted.
\end{proof}

In preparation for stating Lemma \ref{lem:t:R}, we introduce $\tilde {\mathcal C}_a$ as the counterpart of ${\mathcal C}_a$ acting on functions defined on some bounded interval $(t_1, t_2)$. We set
\begin{align*} 
  \phit_a (\tilde t)
  := \big[ (\tilde t - t_1)(t_2 - \tilde t) \big]^{ \tfrac{1-a}2 },
\end{align*}
and recall the operators $S_{t_1}^{t_2}$ and $D_{t_1}^{1-a}$ from \eqref{for:defn:S12} and \eqref{Dat1}. The linear operator $\tilde {\mathcal C}_a$ is given by
\begin{equation*} 
  \tilde{ \mathcal C}_a f := \left\{ \begin{aligned}
    & \frac{\Gamma(a)}{\pi^2}  \cos^2 \Big( \frac{a\pi}2 \Big) \Big( \frac1{\phit_a} S_{t_1}^{t_2} ( \phit_a D_{t_1}^{1-a} f ) + \pi \tan \Big( \frac{a\pi}2 \Big) D_{t_1}^{1-a} f \Big)
    &&\text{if } 0<a<1 \\
    &\frac1{\pi^2 \phit_0} \bigg[ S_{t_1}^{t_2} ( \phit_0 f' )  + \frac{1}{2 \log 2} \bigg( \int_{t_1}^{t_2} \frac{\tilde f_{\rhot}(\tilde s) }{ \phit_0 (\tilde s) } \, d\tilde s + \pi \log (t_2 - t_1) \bigg) \bigg]
    &&\text{if } a=0,
  \end{aligned} \right.
\end{equation*}  
where $f : (t_1, t_2) \to \R$.

\begin{lem}[Regularity properties of Theorem \ref{t:R:extended}] \label{lem:t:R}
Let $a$, $\Vregt$, $\Vextt$ and $\ell$ be as in Theorem \ref{t:R}. Let $\rhot$ be given by Proposition \ref{prop:P61:R}, and set $t_1 := \min (\supp \rhot)$ and $t_2 = \max (\supp \rhot)$. Then, Theorem \ref{t:R} holds, and $(\rhot, t_1, t_2, \Ct)$ with
\begin{equation*}
   \Ct := \iint_{\R \times \R} \Vt (\tilde t - \tilde s) \, d (\rho \otimes \rho) (\tilde s, \tilde t) + \int_\R \Vextt (\tilde t) \, d\rho (\tilde t) 
\end{equation*}
is a solution to Problems \ref{prob:VI:R}--\ref{prob:sSIE:R}, which moreover satisfies
\begin{equation} \label{IF:R:impl}
  \rhot = \tilde{\mathcal C}_a \tilde f_{\rhot}
    \quad \text{on $(t_1, t_2)$,} \quad \text{where} \quad
    \tilde f_{\rhot} := \Ct - \Vregt * \rhot - \Vextt.
\end{equation}
\end{lem}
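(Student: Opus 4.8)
The plan is to reduce the problem on $\R$ to Case~1 by an affine rescaling of $[t_1,t_2]$ onto $[0,1]$, to apply Theorem~\ref{t} via Lemma~\ref{l:t:extn}, and then to pull the conclusions back to $\R$, supplementing them with the three ingredients that are genuinely new to the unbounded setting: the global variational inequality, the endpoint sign conditions, and the vanishing of the leading singular coefficients at $t_1$ and $t_2$.

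\emph{Affine reduction.} Set $\lambda := t_2 - t_1 > 0$ and $\mathcal T(\tilde t) := (\tilde t - t_1)/\lambda$, so that $\overline \rho := \mathcal T_\# \rhot \in \mathcal P([0,1])$ satisfies $\min (\supp \overline \rho) = 0$ and $\max (\supp \overline \rho) = 1$. Rewriting $\Et$ in the new variable turns the interaction distance $\tilde t - \tilde s$ into $\lambda (t-s)$; using the scalings $V_a(\lambda r) = \lambda^{-a} V_a(r)$ for $0 < a < 1$ and $V_0(\lambda r) = V_0(r) - \log \lambda$ for $a = 0$ (the additive constant being immaterial under the unit-mass constraint), and truncating $\Vt$ outside $[-\lambda, \lambda]$ (which does not affect $\rhot$ since $\supp \rhot \subset [t_1,t_2]$), one finds that $\Et(\rho) = \lambda^{-a} E(\mathcal T_\# \rho) + \mathrm{const}$ for an energy $E$ of the Case~1 form \eqref{for:defn:E}, whose potentials $V$, $\Vreg$ and $\Vext$ are obtained by rescaling and shifting $\Vt$, $\Vregt$ and $\Vextt$. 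I would verify that these satisfy \eqref{for:V:props} and \eqref{for:Vext:assns}: regularity and convexity are preserved by the affine map, the lower bound \eqref{for:V:props:lambda} on $V''$ survives the rescaling (up to the constant $c$), and the normalisation $V(1) = -V'(1) \geq 0$ together with $V, \Vext \geq 0$ is arranged by adding suitable constants. The artificial condition \eqref{VextV:bd} need \emph{not} hold here, but since $\{0,1\} \subset \supp \overline \rho$ by the very choice of $t_1, t_2$, Lemma~\ref{l:t:extn} guarantees that Theorem~\ref{t} still applies to $\overline \rho$.

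\emph{Pull-back.} Applying Theorem~\ref{t} to $\overline \rho$ and transporting the statements back along $\mathcal T^{-1}$ immediately yields $\supp \rhot = [t_1, t_2]$, the interior regularity and positivity (for $\ell \geq 3$) in \eqref{for:thm:R:regy}, and --- after conjugating $\mathcal C_a$ by $\mathcal T$, which reproduces precisely the operator $\tilde{\mathcal C}_a$ built from $S_{t_1}^{t_2}$ and $D_{t_1}^{1-a}$ --- the implicit relation \eqref{IF:R:impl}, the constant $\Ct$ being identified with the level value $\int \tilde h_{\rhot} \, d \rhot$ exactly as in Step~4 of the proof of Lemma~\ref{lem:t}. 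The boundary expansion \eqref{for:prop:E:on:P:regy:Hol} pulls back to a preliminary expansion $\rhot(t) = C_i |t - t_i|^{-(1-a)/2} + R_i(|t-t_i|)$ at each endpoint, with H\"older-continuous remainder vanishing at $t_i$.

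\emph{The new ingredients.} The remaining work exploits the \emph{global} minimality of $\rhot$ on $\mathcal P(\R)$ from Proposition~\ref{prop:P61:R}, and I expect the last point below to be the crux. (i) Repeating on $\R$ the first-variation computation from the proof of Proposition~\ref{p:E} gives the variational inequality \eqref{for:thm:R:EL:weak}, so that $\rhot$ solves Problem~\ref{prob:VI:R}. (ii) Testing \eqref{for:thm:R:EL:weak} against probability measures concentrated near a point $\tilde t \notin [t_1,t_2]$ forces the lower bound $\tilde h_{\rhot} \geq \Ct$ on all of $\R$; combined with $\tilde h_{\rhot} = \Ct$ on $(t_1,t_2)$ and the one-sided continuity of $\tilde h_{\rhot}$ at the endpoints, this yields $\tilde h_{\rhot}'(t_1-) \leq 0 \leq \tilde h_{\rhot}'(t_2+)$, establishing Problem~\ref{prob:wSIE:R}. (iii) The hardest step is to upgrade the preliminary endpoint expansion to \eqref{for:thm:R:expa} by showing that each coefficient $C_i$ of the blow-up $|t-t_i|^{-(1-a)/2}$ vanishes; this simultaneously gives $\rhot \in C_0^\beta([t_1,t_2])$ and hence Problem~\ref{prob:sSIE:R}. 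Non-negativity of $\rhot$ gives $C_i \geq 0$, while the lower bound $\tilde h_{\rhot} \geq \Ct$ obtained in (ii) forces $C_i \leq 0$: a positive $C_i$, inserted through \eqref{IF:R:impl} and the explicit affine computation of Proposition~\ref{prop:Ca}, would drive $\tilde h_{\rhot}$ strictly below $\Ct$ arbitrarily close to $t_i$ from outside $[t_1,t_2]$. Once $C_i = 0$, the hypothesis $\Vextt \in C^2$ (resp.\ $C^{2,\alpha_0}$) makes $\tilde f_{\rhot}$ of class $C^{1,\alpha}$ up to the endpoint, and the refined boundary estimates of Proposition~\ref{prop:props:S:Hol}.\ref{prop:props:S:Hol:bdy:0}--\ref{prop:props:S:Hol:bdy:1} extract the next-order term $C_i |t-t_i|^{(1+a)/2}$ with the asserted remainder $R_i$, completing \eqref{for:thm:R:expa}. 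The delicate matching of the singular part of $\tilde h_{\rhot}$ across $t_i$ with the sign constraint from (ii) is where the argument is most technical.
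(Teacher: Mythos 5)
Your proposal is correct in substance and follows essentially the same architecture as the paper's proof: the affine reduction of $[t_1,t_2]$ onto $[0,1]$, the application of Theorem \ref{t} through Lemma \ref{l:t:extn} (with $\{0,1\} \subset \supp \overline \rho$ replacing \eqref{VextV:bd}), the pull-back producing $\tilde{\mathcal C}_a$ and \eqref{IF:R:impl}, and then your three ``new ingredients'' in exactly the roles they play in the paper's Steps 3--5. The one genuine divergence is your item (i): the paper does \emph{not} rerun the first variation on $\R$, and in fact it cannot ``repeat the computation from the proof of Proposition \ref{p:E}'' verbatim, since that proof runs through Theorem \ref{thm:CoV} in the Hilbert space $H_V(0,1)$, and the paper explicitly notes that $H_{\Vt}(\R)$ cannot be constructed (the compact-support condition in \eqref{for:V:props:R} fails for $\Vt$). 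Instead it performs a \emph{second} affine rescaling onto a strictly larger interval $(r_1,r_2) \supset (t_1,t_2)$ chosen so that $\Vextt(r_1) \wedge \Vextt(r_2) \geq \Ct$ (see \eqref{choice:s1s2}), applies Proposition \ref{p:E} there to obtain the variational inequality for measures supported in $[r_1,r_2]$, and then extends $\tilde h_{\rhot} \geq \Ct$ to all of $\R$ using the linear growth of $\Vextt$. Your route is salvageable --- a direct convexity argument, differentiating $\varepsilon \mapsto \Et((1-\varepsilon)\rhot + \varepsilon \mu)$ at $\varepsilon = 0^+$ for finite-energy $\mu$ (the inequality being trivial when $\int \tilde h_{\rhot}\, d\mu = \infty$), yields \eqref{for:thm:R:EL:weak} without any Hilbert-space machinery --- but as written the appeal to Proposition \ref{p:E} is the soft spot, and the paper's detour buys the reuse of already-proven machinery at the cost of one extra rescaling. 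Your (ii) matches the paper's Step 4 (convexity of $\tilde h_{\rhot}$ outside the support, $\tilde h_{\rhot} \geq \Ct$ globally, and equality at $t_i$ give the one-sided derivative signs), and your (iii) is the paper's mechanism stated contrapositively: the paper deduces from $h_{\overline \rho}'(0-) \leq 0$ that $\liminf_{t \downarrow 0} \overline \rho(t) = 0$, hence $C_i = 0$ in \eqref{for:prop:E:on:P:regy:Hol}, which is the same singular-kernel estimate as your observation that $C_i > 0$ would force $\tilde h_{\rhot} < \Ct$ just outside $[t_1,t_2]$. One small inaccuracy in your bootstrap: for $0 < a < 1$ the paper upgrades $f_{\overline \rho}$ to $C^2([0,1])$ (not merely $C^{1,\alpha}$), and this full $C^2$ regularity is what the second-order Taylor expansion in the rerun of Step 6a of Lemma \ref{lem:t}, combined with Proposition \ref{prop:props:S:Hol}.\ref{prop:props:S:Hol:bdy:1}, actually requires; for $a = 0$ one gets $f_{\overline \rho}' \in C^{1,\alpha_0}$, matching your statement.
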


\begin{proof} 
The proof is divided in five steps.
\smallskip

\textit{Step 1: translation to Theorem \ref{t}}. 
Let $L := t_2 - t_1$ and $\mathcal T (\tilde t) := (\tilde t - t_1)/L$ be the affine map which maps $[t_1, t_2]$ to $[0,1]$.

\textit{Step 1a: $0 < a < 1$}. Using $\mathcal T$, we rewrite
\begin{multline} \label{Et:p3a1}
  \Et (\rhot) 
  = \frac12 \int_{t_1}^{t_2} \int_{t_1}^{t_2} \Vt (\tilde t - \tilde s) \, d \rhot (\tilde s) \, d \rhot (\tilde t) + \int_{t_1}^{t_2} \Vextt(\tilde t) \, d\rhot(\tilde t) \\
  = \frac12 \int_0^1 \int_0^1 \Vt \big( L (t-s) \big) \, d (\mathcal T_\# \rhot) (s) \, d (\mathcal T_\# \rhot) (t) + \int_0^1 \Vextt \big( \mathcal T^{-1}(t) \big) \, d(\mathcal T_\# \rhot)(t).
\end{multline}
Since 
$\Vt ( L t )
  = V_a ( L t ) + \Vregt ( L t ) 
  = L^{-a} V_a (t) + \Vregt ( L t ) 
$,
we can further rewrite \eqref{Et:p3a1} as 
\begin{multline} \label{Et:p3a2}
  \Et (\rhot) 
  = L^{-a} \bigg[ \frac12 \int_0^1 \int_0^1 \big[ V_a (t-s) + L^a \Vregt ( L (t-s) ) \big] \, d (\mathcal T_\# \rhot) (s) \, d (\mathcal T_\# \rhot) (t) \\
    + \int_0^1 L^a \Vextt ( L t + t_1 ) \, d(\mathcal T_\# \rhot)(t) \bigg] 
  = L^{-a} \big[ E(\mathcal T_\# \rhot) + c \big],
\end{multline}
where $E$ is as in \eqref{for:defn:E} with 
\begin{equation} \label{VrVe:trf}
  \Vreg(t) := L^a \Vregt ( L t ) - 2c, \quad
  \Vext(t) := L^a \Vextt \big( L t + t_1 \big),
\end{equation}
and $c \in \R$ is a constant such that $V(1) = -V'(1)$. 

\textit{Step 1b: $a = 0$}. We argue analogously to Step 3a. 
Since 
$\Vt ( L t )
  = V_0 (t) - \log L + \Vregt ( L t ) 
$,
the corresponding computation in \eqref{Et:p3a1} and \eqref{Et:p3a2} yields
\begin{equation*} 
  \Et (\rhot) 
  = E(\mathcal T_\# \rhot) + c - \tfrac12 \log L,
\end{equation*}
where $E$ is as in \eqref{for:defn:E} with 
\begin{equation*}
  \Vreg(t) := \Vregt ( L t ) - 2c
  \quad \text{and} \quad
  \Vext(t) := \Vextt \big( L t + t_1 \big),
\end{equation*}
and $c \in \R$ is a constant such that $V(1) = -V'(1)$. Since these definitions are consistent with putting $a = 0$ in \eqref{Et:p3a2} and \eqref{VrVe:trf} (except for an additive constant to $\Et$), we refer in the remainder of the proof to \eqref{Et:p3a2} and \eqref{VrVe:trf} for all $0 \leq a < 1$.
\smallskip

\textit{Step 2: Properties of $\rhot$ for given $- \infty < t_1 < t_2 < \infty$.} 
Since $\rhot$ is the unique minimiser of $\Et$, it follows from \eqref{Et:p3a2} that $E$ also has a unique minimiser, which is given by $\overline \rho := \mathcal T_\# \rhot$. It is easy to see that the transformations in \eqref{VrVe:trf} transfer the properties of $\Vt$, $\Vregt$ and $\Vextt$ in \eqref{for:V:on:R:props} and \eqref{for:Vext:on:R:assns} to those listed in \eqref{for:V:props} and \eqref{for:Vext:assns}. Moreover, by Step 1 we obtain that $0, 1 \in \supp \overline \rho$. Hence, Lemma \ref{l:t:extn} implies that Theorem \ref{t} applies to $\overline \rho$. Pulling back along $\mathcal T$ (i.e.~$\rhot = (\mathcal T^{-1})_\# \overline \rho$), all the statements on $\overline \rho$ given in Theorem \ref{t} transfer to $\rhot$. Hence, we obtain that $\supp \rhot = [t_1, t_2]$, $\rhot$ has an integrable, continuous representative on $(t_1, t_2)$, and that $\rhot$ satisfies the local Sobolev regularity in \eqref{for:thm:R:regy}. Furthermore, we obtain that $(\rhot, t_1, t_2, \Ct)$ satisfies \eqref{for:FBVP:classical} and \eqref{IF:R:impl}. Lastly, if $\ell \geq 3$, then $\rhot > 0$ on $(t_1, t_2)$. 
%
%
%
\smallskip

\textit{Step 3: $\rhot$ satisfies Problem \ref{prob:VI:R}.}
We use a different dilation map than $\mathcal T$ on a larger interval $(r_1, r_2) \supset (t_1, t_2)$ whose endpoints satisfy
\begin{equation} \label{choice:s1s2}
  \Vextt(r_1) \wedge \Vextt(r_2) \geq \Ct
  \quad \text{and} \quad
  \Vextt'(r_1) < 0 < \Vextt'(r_2).
\end{equation}
We set $\mathcal R (\tilde t) := (\tilde t - r_1)/(r_2 - r_1)$ as the dilation map. Then, as in Step 3, we introduce
\begin{equation*} 
  \Et (\rhot) 
  = (r_2 - r_1)^{-a} \big( E^* (\mathcal R_\# \rhot) + c \big),
\end{equation*}
where $E^*$ is as in \eqref{for:defn:E} with 
\begin{equation*}
  \Vreg^*(t) := \Vregt ( (r_2 - r_1) t ) - 2c
  \quad \text{and} \quad
  \Vext^*(t) := \Vextt \big( (r_2 - r_1) t + r_1 \big),
\end{equation*}
and $c \in \R$ is such that $V^*(1) = - (V^*)'(1)$.
Again, we find that $\rho^* := \mathcal R_\# \rhot$ is the unique minimiser of $E^*$ and that $\Vreg^*$ and $\Vext^*$ satisfy \eqref{for:V:props} and \eqref{for:Vext:assns}. Hence, Proposition \ref{p:E} applies to $E^*$, and thus $\rho^*$ satisfies the related Problem \ref{prob:VI}. Hence, $\rhot = \mathcal R_\#^{-1} \rho^*$ satisfies
\begin{equation} \label{wEL:s1s2}
  \int_{r_1}^{r_2} \tilde h_{\rhot} \, d \mu 
  \geq \int_{\R} \tilde h_{\rhot} d \rhot
  \quad \text{for all } \mu \in \mathcal P ([r_1, r_2]) \cap H^{-(1-a)/2} (r_1, r_2).
\end{equation}
Since $\supp \rhot = [t_1, t_2] \subset (r_1, r_2)$ and $\tilde h_{\rhot} = \Ct$ on $[t_1, t_2]$, we obtain from \eqref{wEL:s1s2} that $\tilde h_{\rhot} \geq \Ct$ a.e.~on $[r_1, r_2]$. Moreover, by \eqref{choice:s1s2} we have for all $\tilde t \in [r_1, r_2]^c$ that
\begin{equation*}
  \tilde h_{\rhot} (\tilde t)
  \geq \Vextt (\tilde t)
  > \Vextt(r_1) \wedge \Vextt(r_2) 
  \geq \Ct.
\end{equation*}
Hence, $\rhot$ satisfies Problem \ref{prob:VI:R}.

From here, we denote by $\rhot$ the continuous representative.
\smallskip

\textit{Step 4: $(\rhot, t_1, t_2, \Ct)$ satisfies Problem \ref{prob:wSIE:R}}. By Step 2 it is enough to show that $\rhot$ satisfies the boundary conditions in Problem \ref{prob:wSIE:R} at $t_1$ and $t_2$. We focus on proving $\tilde h_{\rhot}' (t_1-) \leq 0$; the boundary condition at $t_2$ follows from a similar argument.

By the argument in Step 1 of the proof of Theorem \ref{t} it follows that $\tilde h_{\rhot} \in C(\R) \cup C^1((-\infty, t_1))$ and $\tilde h_{\rhot}'' \geq 0$ on $(-\infty, t_1)$. Moreover, by the Monotone Convergence Theorem, we obtain that $\tilde h_{\rhot}' (t_1-)$ is well-defined as a value in $(-\infty, \infty]$. Since $\tilde h_{\rhot} \geq \Ct$ on $\R$ and $\tilde h_{\rhot} (t_1) = \Ct$, it must hold that $\tilde h_{\rhot}' (t_1-) \leq 0$. 
\smallskip

\textit{Step 5: $\rhot$ satisfies the H\"older condition in \eqref{for:thm:R:regy} and \eqref{for:thm:R:expa}}. Let $E$, $V$, $\Vreg$, $\Vext$ and $\overline \rho = \mathcal T_\# \rhot$ as in Step 1. We focus on the expansion of $\overline \rho(t)$ around $t=0$. In addition to the properties of $\overline \rho$ stated in Theorem \ref{t}, we obtain from Steps 2 and 4 that $h_{\overline \rho}' (0-) \leq 0$, which implies
\begin{equation*} 
  \liminf_{t \downarrow 0} \overline \rho (t) = 0.
\end{equation*}
Then, the constants $C_i$ in \eqref{for:prop:E:on:P:regy:Hol} must be $0$, and thus $T^\# \overline \rho$ satisfies the H\"older condition in \eqref{for:thm:R:regy}.

For the proof of \eqref{for:thm:R:expa}, we separate two cases:

\textit{Step 5a: $0 < a < 1$}. We use a bootstrap argument. Since $\overline \rho \in C_0^a([0,1])$ and $\Vreg \in W^{2,1}(0,1)$, we have $\Vreg * \overline \rho \in C^2([0,1])$. Hence, we obtain $f_{\overline \rho} \in C^2 ([0,1])$ from \eqref{forho} and the additional regularity imposed on $\Vextt$. Then, repeating the argument in Step 6a of Lemma \ref{lem:t} with a second order expansion in \eqref{p:Caf:Td} and using Proposition \ref{prop:props:S:Hol}.\ref{prop:props:S:Hol:bdy:1} for the regularity of $R_2$, we obtain that 
\begin{align*}
 R &\in C^2([0,1]) 
 &&\text{satisfies} 
 & |R(t)| &\leq Ct^2, \\
 R_1 &\in C^{1,a}([0,1]) 
 &&\text{satisfies} 
 & |R_1(t)| &\leq Ct^{1+a}, \\
 R_2 &\in C^{1,a}([0,1-\varepsilon]) 
 &&\text{satisfies} 
 & |R_2(t)| &\leq Ct^{1+a}, \\
 R_3 &\in C^{1,a}([0,1-\varepsilon]) 
 &&\text{satisfies} 
 & |R_3(t)| &\leq Ct^{1+a},
\end{align*}
and the equivalent of \eqref{p:Caf:Td1} becomes
\begin{equation*} 
  \overline \rho (t) = \mathcal C_a f_{\overline \rho} (t) = \frac{C + C' t}{\phi_a(t)} + R_3(t).  
\end{equation*}
Since $\overline \rho (0) = 0$, we obtain $C = 0$, and \eqref{for:thm:R:expa} follows. 

\textit{Step 5b: $a = 0$}. The proof is analogous to that of Step 5a; the only difference is that from $\overline \rho \in C_0^{\alpha_0}([0,1])$, $\Vreg \in W^{2, 1}(0,1)$ and $\Vext \in C^{2, \alpha_0}([0,1])$ we obtain $f_{\overline \rho}' \in C^{1, \alpha_0}([0,1])$.
\end{proof}

Finally, we focus on the third of the three parts of the proof of Theorem \ref{t:R:extended}:
 
\begin{lem}[Problems \ref{prob:VI:R}--\ref{prob:sSIE:R} have a unique solution] \label{lem:Un:PR}
Let $a \in [0, 1)$ and assume that $\Vregt$ and $\Vextt$ satisfy \eqref{for:V:on:R:props} and \eqref{for:Vext:on:R:assns}. Then, all Problems \ref{prob:VI:R}, \ref{prob:wSIE:R} and \ref{prob:sSIE:R} have a unique solution.
\end{lem}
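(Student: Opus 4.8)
Existence of solutions to Problems \ref{prob:VI:R}--\ref{prob:sSIE:R} is already supplied by Lemma \ref{lem:t:R}, which exhibits the quadruple $(\rhot, t_1, t_2, \Ct)$ --- built from the unique minimiser of $\Et$ --- as a common solution of all three. The entire task is therefore to prove uniqueness, and the plan is to show that every solution of each problem coincides with this minimiser-based one. Since $\Et$ has a unique minimiser by Proposition \ref{prop:P61:R}, it is enough to reduce each of Problems \ref{prob:wSIE:R} and \ref{prob:sSIE:R} to Problem \ref{prob:VI:R}, and to prove that any solution of Problem \ref{prob:VI:R} is the minimiser of $\Et$.

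I first treat Problem \ref{prob:VI:R}. The functional $\Et$ is strictly convex (as shown in the proof of Proposition \ref{prop:P61:R}), and for admissible $\mu$ its directional derivative at a candidate $\rho$ along $\mu - \rho$ equals $\int_\R \tilde h_\rho \, d(\mu - \rho)$. Hence \eqref{for:thm:R:EL:weak} says exactly that this derivative is non-negative in every admissible direction, and convexity promotes this to $\Et(\mu) \geq \Et(\rho)$ for all $\mu \in \mathcal P(\R) \cap H^{-(1-a)/2}(\R)$; for $\mu$ outside this space the interaction energy is $+\infty$ and the inequality is trivial. Thus any solution of Problem \ref{prob:VI:R} is a global minimiser of $\Et$, and by the uniqueness in Proposition \ref{prop:P61:R} it must equal $\rhot$; the endpoints $t_1, t_2$ and the constant then coincide with those of $\rhot$.

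For Problems \ref{prob:wSIE:R} and \ref{prob:sSIE:R} the strategy is to show that any solution $(\rho, t_1, t_2, C)$ is in fact a solution of Problem \ref{prob:VI:R}. Two facts are required: that $\rho$ is a probability measure on $[t_1, t_2]$, and that $\tilde h_\rho \geq C$ on all of $\R$. For the first, I rescale $[t_1, t_2]$ to $[0,1]$ by the affine map of Step~1 of Lemma \ref{lem:t:R}; the rescaled $\rho$ then solves the full-support Case~1 equation $h_\rho = C$ on $(0,1)$, with rescaled potentials obeying \eqref{for:V:props} and \eqref{for:Vext:assns}. For a fixed interval and fixed constant such an equation has at most one solution by Lemma \ref{lem:Vastxi:zero:means:xi:zero}, so $\rho$ is forced to agree with the constrained minimiser furnished by Proposition \ref{p:E} (which needs neither \eqref{VextV:bd} nor non-negativity as an input, and whose uniqueness mechanism, following Lemma \ref{lem:Un:P13}, invokes \eqref{VextV:bd} only for the auxiliary energy, where $\Vextt \equiv 0$ and \eqref{VextV:bd} holds automatically); in particular $\rho \geq 0$, $\supp \rho = [t_1, t_2]$, and $C$ is the associated positive constant. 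Granted non-negativity, $\tilde h_\rho'' = \Vt'' * \rho + \Vextt'' \geq 0$ off $[t_1, t_2]$ by \eqref{for:V:on:R:props} and \eqref{for:Vext:on:R:assns}, so $\tilde h_\rho$ is convex on each of $(-\infty, t_1)$ and $(t_2, \infty)$; together with $\tilde h_\rho(t_i) = C$ and the boundary conditions $\tilde h_\rho'(t_1-) \leq 0 \leq \tilde h_\rho'(t_2+)$ in \eqref{for:FBVP} this forces $\tilde h_\rho \geq C$ on $\R$. Since $\tilde h_\rho = C$ on $\supp \rho$, we obtain $\int_\R \tilde h_\rho \, d\mu \geq C = \int_\R \tilde h_\rho \, d\rho$ for every admissible $\mu$, i.e.\ $\rho$ solves Problem \ref{prob:VI:R}. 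For the classical Problem \ref{prob:sSIE:R}, the homogeneous Dirichlet endpoint values built into $C_0^\beta([t_1, t_2])$ play the role of the derivative conditions of \eqref{for:FBVP} (as explained after the statement of Problem \ref{prob:sSIE:R}), which reduces it to Problem \ref{prob:wSIE:R}.

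The free boundary is the main obstacle: a priori the endpoints $t_1 < t_2$ and the constant $C$ are unknown, and in Problem \ref{prob:wSIE:R} the datum $\rho$ is only a possibly signed element of $H^{-(1-a)/2}(t_1, t_2)$. The delicate step is thus the non-negativity and full-support claim above, since without it the convexity argument that yields $\tilde h_\rho \geq C$ --- and hence global minimality --- collapses; reconciling the constrained minimiser (whose support could a priori be a proper subinterval) with the full-support equation solved by $\rho$ is exactly where care is needed. Once non-negativity is secured, the boundary conditions in \eqref{for:FBVP} are precisely what promote the interval-constrained minimality to global minimality, and the uniqueness of the minimiser of $\Et$ then simultaneously pins down $\rho$, the free endpoints $t_1, t_2$, and the constant $C$, completing the proof.
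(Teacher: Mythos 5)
Your proposal is correct and follows essentially the same route as the paper: existence via Lemma \ref{lem:t:R}, then uniqueness by showing that every solution of Problem \ref{prob:VI:R} is a global minimiser of $\Et$ (hence equals $\rhot$), and by reducing Problems \ref{prob:wSIE:R} and \ref{prob:sSIE:R} to Problem \ref{prob:VI:R} through the affine rescaling, the identification with the Case-1 solution of Problem \ref{prob:wSIE} (which delivers $\rho \geq 0$, with \eqref{VextV:bd} entering only through the auxiliary energy, exactly as you observe), and convexity of $\tilde h_{\rho}$ off the support combined with the boundary information to conclude $\tilde h_{\rho} \geq C$ on $\R$. The only differences are ones of rigour rather than strategy: where you differentiate $\Et$ formally and invoke convexity, the paper implements this step via the monotone kernel truncations $\Vt_k$ satisfying \eqref{for:V:props:R} (needed because $H_{\Vt}(\R)$ cannot be constructed for the non-compactly-supported $\Vt$, so the pairing $\int_\R \tilde h_\rho \, d(\rhot - \rho)$ must be compared with $\Et_k(\rhot) - \Et_k(\rho)$ before passing $k \to \infty$), and for Problem \ref{prob:sSIE:R} the paper does not literally reduce to Problem \ref{prob:wSIE:R} but rederives the missing boundary inequality, obtaining $\tilde h_{\rho}' \leq 0$ on $(-\infty, s_1)$ from $\rho \in C_0^\beta$, $\rho \geq 0$ and continuity of $\tilde h_\rho$, rather than merely asserting that the homogeneous Dirichlet values substitute for the derivative conditions in \eqref{for:FBVP}.
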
 

\begin{proof} 
Let $\rhot$ be the solution to Problem \ref{prob:minz:R}. In part 2 of the proof of Theorem \ref{t:R:extended} we have proven that $\rhot$ satisfies all the asserted properties of Theorem \ref{t:R:extended}, including the fact that it is a solution to Problems \ref{prob:VI:R}--\ref{prob:sSIE:R}. It remains to show the uniqueness of all three problems.
\smallskip

\textit{Step 1: Problem \ref{prob:VI:R} has a unique solution}. Let $\rho \in \mathcal P (\R) \cap H^{-(1-a)/2} (\R)$ be a solution. We consider a sequence of approximating energies
\begin{equation*}
  \Et_k (\rho) = \frac12 \int_{\R} \Vt_k * \rho \, d \rho + \int_\R \Vextt \, d\rho,
  \quad k \in \N_+,
\end{equation*}
where the potentials $\Vt_k : \R \to [0, \infty]$ are chosen such that $\Vt_k$ satisfies \eqref{for:V:props:R}, $\Vt_k |_{(-k,k)} = \Vt |_{(-k,k)}$, and $\Vt_k \leq \Vt_{k+1} \leq \Vt$ for all $k \in \N_+$. By the Monotone Convergence Theorem, it is easy to see that $\Et_k (\rho) \to \Et (\rho)$ as $k \to \infty$.

Using \eqref{for:thm:R:EL:weak} with $\mu = \rhot$, we obtain by the Monotone Convergence Theorem that
\begin{align} \label{pf:lUn1}
  0 
  \leq \int_\R \tilde h_{\rho} \, d(\rhot - \rho)
  = \lim_{k \to \infty} \int_\R (\Vt_k * \rho + \Vextt) \, d(\rhot - \rho).
\end{align}
Since $\Vt_k$ satisfies \eqref{for:V:props:R} for any $k \in \N_+$, we can apply Corollary \ref{cor:props:HV} to $\Vt_k$ to obtain
\begin{multline*}
  \int_\R (\Vt_k * \rho + \Vextt) \, d(\rhot - \rho)
  = (\rho, \rhot - \rho)_{\Vt_k} + \int_\R \Vextt \, d(\rhot - \rho) \\
  \leq \frac12 (\rhot, \rhot)_{\Vt_k} - \frac12 (\rho, \rho)_{\Vt_k} + \int_\R \Vextt \, d(\rhot - \rho) 
  = \Et_k (\rhot) - \Et_k (\rho) 
  \xto{k \to \infty} \Et (\rhot) - \Et(\rho).
\end{multline*}
Together with \eqref{pf:lUn1}, we obtain from Proposition \ref{prop:P61:R} that $\rho = \rhot$.
\smallskip

\textit{Step 2: Problem \ref{prob:wSIE:R} has a unique solution}.
Let $(\rho, s_1, s_2, C)$ be a solution to Problem \ref{prob:wSIE:R}. We prove that $\rho$ is a solution to Problem \ref{prob:VI:R}, and conclude by the statement of Step 1. We define the affine coordinate transformation $\mathcal R (\tilde t) := (\tilde t - s_1)/(s_2 - s_1)$, and obtain, analogously to Steps 1 and 2 of the proof of Lemma \ref{lem:t:R}, that $(\mathcal R_\# \rho, \, (s_2 - s_1)^a C - 2c)$ 
equals the solution $(\overline \rho, \overline C)$ to Problem \ref{prob:wSIE} for the shifted potentials $V$ and $\Vext$. In particular, from $\mathcal R_\# \rho = \overline \rho \geq 0$ we obtain $\rho \geq 0$. Similar to \eqref{pf:prop:E:on:P:3}, we then deduce that $\tilde h_\rho'' \geq 0$ on $[s_1, s_2]^c$. Together with the boundary condition $\tilde h_{\rho}' (s_1-) \leq 0 \leq \tilde h_{\rho}' (s_2+)$, we conclude that $\tilde h_\rho \geq C$ on $\R$, and thus $\rho$ is a solution to Problem \ref{prob:VI:R}.
\smallskip

\textit{Step 3: Problem \ref{prob:sSIE:R} has a unique solution}. Let $(\rho, s_1, s_2, C)$ be a solution to Problem \ref{prob:sSIE:R}. We reason similarly as in Step 2. The only difference is that we can rely no more on the boundary conditions in Problem \ref{prob:wSIE:R} to prove that $\tilde h_\rho \geq C$ on $\R$. Instead, we obtain from $\rho \in C_0^a ([s_1, s_2])$ that $\tilde h_\rho \in C(\R)$. Moreover, from $\rho \geq 0$ with $\supp \rho \subset [s_1, s_2]$ we obtain, similar to the proof of Step 1 in Theorem \ref{t}, that $\tilde h_\rho' \leq 0$ on $(-\infty, s_1)$. Hence, $\tilde h_\rho \geq C$ on $(-\infty, s_1)$. A similar argument shows that $\tilde h_\rho \geq C$ on $(s_2, \infty)$, and thus $\tilde h_\rho \geq C$ on $\R$. Hence, $\rho$ satisfies Problem \ref{prob:VI:R}.
\end{proof}

\section{Applications of Theorem \ref{t} and Theorem \ref{t:R:extended}}
\label{s:appl}

In this section we apply Theorem \ref{t} and Theorem \ref{t:R:extended} (i.e., the extended version of Theorem \ref{t:R}) to improve previous results in the literature, including all three applications (A1)--(A3) mentioned in Section \ref{ss:applAi}. We consider the general form of the energy $E$ as in \eqref{for:defn:ER} in which $\Vext$ is allowed to jump to $+\infty$ at $- \infty \leq s_1 < s_2 \leq \infty$, and recall the discussion in Section \ref{ss:disc} on how to extend Theorems \ref{t} and \ref{t:R:extended} to cover this general setting. In \S \ref{ss:appl:expl} we compute explicitly the minimiser ${\overline \rho} \in \mathcal P([s_1, s_2])$ in several special cases, including the one from (A3). In \S \ref{ss:impr:reg} we strengthen the results of \cite{SandierSerfaty15} and \cite{GeersPeerlingsPeletierScardia13} to lift the previous limitations in applications (A1) and (A2).

\subsection{Explicit formulas for ${\overline \rho}$ in the case $\Vreg = 0$}
\label{ss:appl:expl}

When $\Vreg = 0$, \eqref{IF:R:impl} provides an explicit expression for ${\overline \rho}$. In this section we simplify this expression for several examples in the literature, including that of (A3).

The case $a = 0$ is studied in \cite{HeadLouat55} in the context of dislocation densities. For $[s_1, s_2] = [0,1]$ and $\Vext(t) = 0$ for $t \in [0,1]$ it is found that $[t_1, t_2] := \supp {\overline \rho} = [0,1]$ and
\begin{equation*} 
  {\overline \rho} (t) = \frac1{ \pi \sqrt{ t(1-t) } }.
\end{equation*}
Moreover, for $[s_1, s_2] = [0, \infty)$ and $\Vext (t) = t$ for $t \geq 0$ it is found that $[t_1, t_2] = [0,2]$ with
\begin{equation*} 
  {\overline \rho} (t) = \frac1{ \pi  } \sqrt{ \frac{2-t}t }.
\end{equation*}
Both formulas can also be computed from \eqref{IF:R:impl}; we omit the details. For treatment of the setting $[s_1, s_2] = \R$ and $\Vext(t) = t^\alpha$ we refer to \cite[\S IV.5]{SaffTotik97}.

In the case $0 < a < 1$, the easiest setting is given by $[s_1, s_2] = [0,1]$ and $\Vext(t) = 0$ for $t \in [0,1]$. From the structure of $E$ it follows a priori that $[t_1, t_2] = [0,1]$. Then, from \eqref{IF:R:impl} with $f_{{\overline \rho}} (t) = {\overline C}$, we apply Proposition \ref{prop:Ca} to obtain
\begin{equation*} 
  {\overline \rho} (t) = \frac{{\overline C} }\pi \cos \Big( \frac{a\pi}2 \Big) \big[ t(1-t) \big]^{ - \tfrac{1-a}2 }.
\end{equation*}
To find ${\overline C}$, we use \eqref{Zwil} to compute
\begin{equation*}
  1
  = \int_0^1 {\overline \rho}(t) \, dt
  = \frac{{\overline C} }\pi \cos \Big( \frac{a\pi}2 \Big) \int_0^1 \big[ t(1-t) \big]^{ - \tfrac{1-a}2 } \, dt
  = \frac{{\overline C} }\pi \cos \Big( \frac{a\pi}2 \Big) \frac{ \Gamma (\tfrac{1+a}2)^2 }{ a \Gamma(a) }.
\end{equation*}
We conclude that
\begin{equation*} 
  {\overline \rho} (t) = \frac{ a \Gamma(a) }{ \Gamma (\tfrac{1+a}2)^2 } [ t(1-t) ]^{ - \tfrac{1-a}2 }
  \quad \text{and} \quad
  {\overline C} = \frac{ a \pi \Gamma(a) }{ \Gamma (\tfrac{1+a}2)^2 \cos ( \tfrac{a\pi}2 ) }.
\end{equation*}

Next we consider the setting in \cite{HallChapmanOckendon10}: $0 < a < 1$, $[s_1, s_2] = [0, \infty)$ and $\Vext (t) = \gamma t$ for $t \geq 0$ with $\gamma > 0$. In this setting, an explicit formula for ${\overline \rho}$ was still missing. Here we derive this formula (see \eqref{rhob:HCO}). Since $\Vext$ is increasing on $[0, \infty)$, we find from the structure of $E$ that $t_1 = 0$.

Leaving $t_2$ and ${\overline C}$ unknown, we compute ${\overline \rho}$ from \eqref{IF:R:impl}. This gives, using Proposition \ref{prop:Ca},
\begin{equation} \label{rho:HCO2} 
  {\overline \rho} (t) 
  = \tilde {\mathcal C}_a f_{{\overline \rho}} (t)
  = \tilde {\mathcal C}_a ({\overline C} - \gamma t)
  = \frac{ \cos ( \tfrac{a\pi}2 )}{ a \pi } \frac{ a {\overline C} + \tfrac{1-a}{2} \gamma t_2 - \gamma t }{\phit_a (t)}. 
\end{equation} 
Next we compute ${\overline C}$ by using the condition ${\overline \rho} (t_2) = 0$ in \eqref{rho:HCO2}. This yields ${\overline C} = \tfrac{1+a}{2a} \gamma t_2$, and thus
\begin{equation*}
    {\overline \rho} (t) 
  = \frac{ \gamma \cos ( \tfrac{a\pi}2 ) }{ a\pi } (t_2-t)^{ \tfrac{1+a}2 } t^{ - \tfrac{1-a}2 }.
\end{equation*}  
Finally, we determine $t_2$ from the unit mass condition. Using \eqref{Zwil}, we compute
\begin{equation*}
  1
  = \int_0^{t_2} {\overline \rho} (t) \, dt 
  = \gamma \frac{ \cos ( \tfrac{a\pi}2 ) }{ 2 \pi a^2 \Gamma (a) } \Gamma \big( \tfrac{1+a}2 \big)^2 t_2^{1+a}.
\end{equation*} 

Gathering our computations, we have
\begin{gather} \label{rhob:HCO}
  {\overline \rho} (t) 
  = \gamma \frac{ \cos ( \tfrac{a\pi}2 ) }{ \pi a } (t_2-t)^{ \tfrac{1+a}2 } t^{ - \tfrac{1-a}2 },
  \quad {\overline C} = \frac{1+a}{2a} \gamma t_2,
  \quad
  t_2 = \bigg[ \frac\gamma{2 \pi} \frac{ \cos ( \tfrac{a\pi}2 ) }{ a^2 \Gamma (a) } \Gamma \Big( \frac{1+a}2 \Big)^2 \bigg]^{ - \tfrac1{1+a} }.
\end{gather}

\subsection{Improved regularity}
\label{ss:impr:reg}

In this section we use Theorem \ref{t} and Theorem \ref{t:R:extended} to strengthen the results of \cite{SandierSerfaty15} and \cite{GeersPeerlingsPeletierScardia13} as part of applications (A1) and (A2). 

\smallskip
The setting in \cite{SandierSerfaty15} is given by $a = 0$, $[s_1, s_2] = \R$, $\Vreg \equiv 0$ and $\Vext$ a confining potential. The energy $E$ and its minimiser ${\overline \rho}$ (possibly with a disconnected support) are the starting point in \cite{SandierSerfaty15} to derive crystallisation phenomena in 1D log-gases. Instead of proving that ${\overline \rho}$ satisfies the following sufficient properties for their further results,
\begin{subequations} \label{SS15}
\begin{align} \label{SS15:Hol}
  &{\overline \rho} \in C^{\tfrac 12} (\R); \\ \label{SS15:supp}
  &\supp {\overline \rho} \text{ is a finite union of $K \in \N$ compact intervals } [t_1^k, t_2^k]; \\ \label{SS15:LB}
  &\exists \, c > 0 \
    \forall \, k = 1,\ldots,K \
    \forall \, t_1^k \leq t \leq  t_2^k :  
    {\overline \rho} (t) \geq c \sqrt{ (t_2^k - t) (t - t_1^k) },
\end{align}
\end{subequations}
the authors of \cite{SandierSerfaty15} assume that $\Vext$ is chosen such that these properties hold. 

Thanks to \cite{DeiftKriecherbauerMcLaughlin98}, these properties are satisfied whenever $\Vext$ is analytic. Theorem \ref{t:R} extends the choice of $\Vext$ significantly by relaxing the analyticity. Indeed, for convex $\Vext$ (for the precise conditions, see \eqref{for:Vext:assns}), it follows from Theorem \ref{t:R} that \eqref{SS15:supp} is satisfied with $K=1$. If, moreover, $\Vext \in C_{\text{loc}}^{2, \alpha}$ for some $\alpha \in (0,\frac12)$, then \eqref{for:thm:R:expa} implies that \eqref{SS15:Hol} and \eqref{SS15:LB} are satisfied too, possibly for $c = 0$. To guarantee that $c > 0$, it is enough to impose $\Vext \in W_{\text{loc}}^{4, p}$ for some $p > 1$. To see this, we rely on the strict positivity of ${\overline \rho}$ on its support to focus on its behaviour at the endpoints. For convenience, we focus on the left endpoint $t_1$. We reason by contradiction. Suppose $c=0$. Then, we obtain from \eqref{for:thm:R:expa} that ${\overline \rho}' \in C^{\alpha}([t_1 - 1, \frac12 (t_1+t_2)])$ for some $\alpha > 0$. Hence, by Proposition \ref{prop:props:S}.\ref{prop:props:S:log} and Proposition \ref{prop:props:S:Hol}.\ref{prop:props:S:Hol:bdd}, the function
\begin{equation*}
  (V_0 * {\overline \rho})''(t)
  = (V_0 * {\overline \rho}')'(t)
  = \frac d{dt} \int_{t_1 - 1}^{t_2} \log |t - s| {\overline \rho}'(s) \, ds
  = \big( S_{t_1 - 1}^{t_2} ({\overline \rho}') \big)(t)
\end{equation*}
is H\"older continuous around $t_1$. Hence, $h_{{\overline \rho}}''$ is H\"older continuous around $t_1$. Moreover, as in Step 2 of the proof of Lemma \ref{lem:t}, we find that $\liminf_{t \uparrow t_1} h_{{\overline \rho}}''(t) > 0$, which contradicts with \eqref{for:FBVP:classical}.
\smallskip

Next we extend \cite[Thm.~2]{GeersPeerlingsPeletierScardia13}. The setting is given by $a = 0$, $[s_1, s_2] = [0, \infty)$, $\Vext(t) = t$ for $t \geq 0$ and 
\begin{equation*} 
  V(t) := t \coth t - \log | 2 \sinh t |.
\end{equation*}
For this setting, \cite[Thm.~2]{GeersPeerlingsPeletierScardia13} states that Problem \ref{prob:minz:R} has a unique solution ${\overline \rho}$ in $\mathcal P ([0,\infty))$, but no further properties of ${\overline \rho}$ were sought.

Here, we provide such properties by applying Theorem \ref{t:R}. With this aim, we first prove that $V$ satisfies \eqref{for:V:on:R:props}. The conditions in \eqref{for:V:on:R:props:genl} can be proven by direct computations (see, e.g., the appendices of \cite{GeersPeerlingsPeletierScardia13,VanMeurs15}). To prove \eqref{for:V:on:R:props:psi}, we
define the analytic function $\psi(t) := \frac{ \sinh t }t \geq 1$, and rewrite
\begin{equation*} 
  \Vreg (t)
  = t \frac{ \cosh t }{ \sinh t } - \log | 2 \sinh t | + \log |t| 
  = \frac{ \cosh t }{ \psi (t) } - \log | 2 \psi (t) |.
\end{equation*}
Hence, $\Vreg \in C^\infty (\R)$. This proves \eqref{for:V:on:R:props:psi}, and \eqref{for:V:on:R:props:lambda} follows readily.

Thus, Theorem \ref{t:R} applies for any $\ell \in \N_+$ and any $1 < p_0 < 2$. In particular, $\supp {\overline \rho} = [0, t_2]$ with $t_2 < \infty$, ${\overline \rho} (t_2) = 0$, ${\overline \rho} \in C^{1/2}([\tfrac12 t_2, t_2]) \cap C^\infty ((0, t_2))$ and ${\overline \rho} > 0$ on $(0, t_2)$. These properties are enough for the ongoing research on (A2).

\section*{Acknowledgements}

MK is supported by MEXT KAKENHI Grant Number JP17K18733. The work of PvM is funded by the International Research Fellowship of the Japanese Society for the Promotion of Science with the related JSPS KAKENHI Grant Number JP15F15019.

\appendix

\section{Proof of Lemma \ref{lem:props:V}}
\label{app:pf:lem:props:V}

\begin{proof}[Proof of Lemma \ref{lem:props:V}]
Lemma \ref{lem:props:V}.\ref{lem:props:V:Vreg:regy} follows from the local regularity property in \eqref{for:V:props:R} and $\Vreg (t) = -V_a(t)$ for all $|t| \geq b$. Lemma \ref{lem:props:V}.\ref{lem:props:V:V_k} is satisfied for the sequence of convex functions given by $V_k (t) := \int_t^\infty k \wedge (-V')(s) \, ds$ for $t > 0$, with even extension to the negative half-line. 
\smallskip

Next we prove Lemma \ref{lem:props:V}.\ref{lem:props:V:Vhat:bounds}. We start with the upper bound on $\widehat V$. Since $V \in L^1(\R)$, we have $\widehat V \in L^\infty (\R)$, and hence it is sufficient to prove the decay of the tails of $|\widehat V (\omega)|$ for all $|\omega| > 1$. We employ the splitting $V = V_a + \Vreg$. By interpreting $V_a$ as a tempered distribution, we obtain from \cite[1.3.(1)]{Erdelyi54} and \cite[\S 5.2 Example 4]{RichardsYoun07} that
\begin{equation} \label{for:pf:lem:props:V:1}
  \widehat{ V_a } (\omega) 
  = \left\{ \begin{aligned}
    &\frac1{2|\omega|}
    &&\text{if } a = 0, \\
    &\frac{2 \sin (\tfrac{a\pi}2) \Gamma (1-a) }{ |2 \pi \omega|^{1-a} }
    &&\text{if } 0 < a < 1
  \end{aligned} \right\}
  \quad \text{for } |\omega| > 0,
\end{equation}
where the distribution $|\cdot|^{-1}$ is defined by
\begin{equation*}
  \langle \varphi, |\cdot|^{-1} \rangle
  := -\int_\R \varphi'(\omega) \sign (\omega) \log |\omega| \, d\omega,
  \quad \text{for all } \varphi \in \mathcal S (\R).
\end{equation*}
From \eqref{for:pf:lem:props:V:1} we find for all $0 \leq a < 1$ a constant $C_a > 0$ such that 
\begin{equation*}
  \widehat{ V_a } (\omega) \leq C_a (1 + \omega^2)^{- { \tfrac{1-a}2}}
  \quad \text{for all } |\omega| \geq 1.
\end{equation*}
Regarding $\Vreg$, we have by Lemma \ref{lem:props:V}.\ref{lem:props:V:Vreg:regy} that $\Vreg'' \in L^1 (\R)$ for any $0 \leq a < 1$, and thus $\mathcal F (\Vreg'') \in L^\infty (\R)$. By applying basic results from the theory on tempered distributions (see, e.g., \cite[Chap.~5]{RichardsYoun07}, we conclude from 
\begin{equation*}
  \big| 4 \pi^2 \omega^2 \widehat \Vreg (\omega) \big|
  = \big| \widehat{ \Vreg'' } (\omega) \big|
  \leq C
\end{equation*}
that
\begin{equation*}
  \big| \widehat{\Vreg} (\omega) \big| 
  \leq C |\omega|^{-2} 
  \leq \tilde C (1 + \omega^2)^{- { \tfrac{1-a}2}}
  \quad \text{for all } |\omega| \geq 1.
\end{equation*}
Together with \eqref{for:pf:lem:props:V:1} and $V = V_a + \Vreg$ we conclude that the upper bound on $\widehat V$ in Lemma \ref{lem:props:V}.\ref{lem:props:V:Vhat:bounds} holds.

Next we prove the lower bound on $\widehat V$ in Lemma \ref{lem:props:V}.\ref{lem:props:V:Vhat:bounds}. We use the identity 
\begin{align} \notag
  \widehat V (\omega) 
  &= \int_{\R} V(t) \cos (2 \pi \omega t) \, dt 
  = 2 \sum_{\ell=0}^\infty \int_{\tfrac \ell \omega}^{\tfrac {\ell+1}\omega } V(t) \cos (2 \pi \omega t) \, dt \\\label{Vhat:nonneg}
  &= \frac2\omega \sum_{\ell=0}^\infty \int_0^1 V \Big( \frac{\ell + \xi}\omega \Big) \cos (2 \pi \xi) \, d\xi 
  = \frac2\omega \sum_{\ell=0}^\infty - \int_0^1 \frac1\omega V' \Big( \frac{\ell + \xi}\omega \Big) \frac1{2\pi} \sin (2 \pi \xi) \, d\xi \\\notag
  &= \frac1{\pi \omega^2} \sum_{\ell=0}^\infty - \int_0^1 \bigg[ V' \Big( \frac{\ell + \xi}\omega \Big) - V' \Big( \frac{\ell + \tfrac12}\omega \Big) \bigg] \sin (2 \pi \xi) \, d\xi \\\notag
  &\begin{aligned}
       = \frac1{\pi \omega^2} \sum_{\ell=0}^\infty \bigg( 
         &\int_0^{\tfrac12} \bigg[ \int_\xi^{\tfrac12} \frac1\omega V'' \Big( \frac{\ell + \eta}\omega \Big) \, d\eta \bigg] \sin (2 \pi \xi) \, d\xi \\
         &- \int_{\tfrac12}^1 \bigg[ \int_{\tfrac12}^\xi \frac1\omega V'' \Big( \frac{\ell + \eta}\omega \Big) \, d\eta \bigg] \sin (2 \pi \xi) \, d\xi \bigg)
     \end{aligned} \\\label{for:pf:lem:props:V:2}
  &= \frac1{\pi \omega^3} \sum_{\ell=0}^\infty \int_0^{\tfrac12} \bigg[ \int_0^{\tfrac12} V'' \Big( \frac{\ell + \xi + \zeta}\omega \Big) \, d\zeta \bigg] \sin (2 \pi \xi) \, d\xi.
\end{align}

We first prove that $\widehat V$ is positive. In view of \eqref{for:V:props:lambda} we set $\lambda(t) := \essinf_{0 < s < t} V''(s)$. Continuing from \eqref{for:pf:lem:props:V:2}, we use \eqref{for:V:props:lambda} to estimate
\begin{align*}
  \widehat V (\omega) 
  &\geq \frac1{\pi \omega^3} \int_0^{\tfrac12} \int_0^{\tfrac12} \lambda \Big( \frac{\xi + \eta}\omega \Big) \sin (2 \pi \xi) \, d\xi d\eta \\ 
  &\geq \frac1{\pi \omega^3} \int_0^{\tfrac12} \int_0^{\tfrac14} \lambda \Big( \frac{\xi + \eta}\omega \Big) 4 \xi \, d\xi d\eta \\
  &\geq \frac4\pi \int_0^{\tfrac1{2\omega}} \int_0^{\tfrac1{4\omega}} \lambda (x+y) x \, dx dy  
  > 0,
\end{align*}
and thus the lower bound in Lemma \ref{lem:props:V}.\ref{lem:props:V:Vhat:bounds} holds on any bounded interval. To bound the tails of $\widehat V$ from below, we set $c$ and $\varepsilon$ as in \eqref{for:V:props:lambda}, and estimate for any $\omega \geq \tfrac2\varepsilon$
\begin{align*} 
  \widehat V (\omega) 
  &\geq \frac1{\pi \omega^3} \sum_{\ell=0}^\infty \int_0^{\tfrac12} \int_0^{\tfrac12} \lambda \Big( \frac{\ell + \xi + \zeta}\omega \Big) \sin (2 \pi \xi) \, d\xi d\zeta \\
  &\geq \frac1{\pi \omega^2} \sum_{\ell=0}^\infty \frac1\omega \lambda \Big( \frac{\ell + 1}\omega \Big) \int_0^{\tfrac12} \int_0^{\tfrac12} \sin (2 \pi \xi) \, d\xi d\zeta \\
  &= \frac1{2 \pi^2 \omega^2} \sum_{\ell=0}^\infty \frac1\omega \lambda \Big( \frac{\ell + 1}\omega \Big) 
  \geq \frac{1}{2 \pi^2 \omega^2} \int_{\tfrac1\omega}^\varepsilon \lambda ( s ) \, ds 
  \geq \frac{c}{2 \pi^2 \omega^2} \int_{\tfrac1\omega}^\varepsilon \frac1{s^{2+a}} \, ds 
  \geq \tilde c (1 + \omega^2)^{-\tfrac{1 - a}2},
\end{align*}
which completes the proof of Lemma \ref{lem:props:V}.\ref{lem:props:V:Vhat:bounds}.
\end{proof}

\section{Computation of the constant in Theorem \ref{thm:expl:soln:log}}
\label{app:pf:thm:expl:soln:log}

Here we prove that the constant $C$ in \eqref{for:pf:thm:expl:soln:log:4} equals $0$. Dividing by $\phi_0$ and integrating over $t$, it is enough to show that
\begin{align} \label{app:1}
  \int_0^1 \int_0^1 \frac{ \log| t -  s| }{ \phi_0(t) } \,  u( s) \, d  s \, dt 
  = -\int_0^1 \frac{ f(s) }{ \phi_0(s) } \, ds
\end{align}
where $u = \mathcal C_0 f$ and $f \in C^{1,\alpha}([0,1])$ for some $0 < \alpha < 1$.

We start by computing the left-hand side of \eqref{app:1}. Using Fubini's Theorem, we rewrite
\begin{equation} \label{app:2}
  \int_0^1 \int_0^1 \frac{ \log| t -  s| }{ \phi_0(t) } \,  u( s) \, d  s \, dt
  = \int_0^1 \bigg( \int_0^1 \frac{ \log| t -  s| }{ \phi_0(t) } \, dt \bigg) \,  u( s) \, d  s.
\end{equation}
With Proposition \ref{prop:props:S}.\ref{prop:props:S:log} and \eqref{EK00} we obtain that the term in parentheses satisfies
\begin{equation*}
  \frac d{ds} \int_0^1 \frac{ \log| t -  s| }{ \phi_0(t) } \, dt
  = \Big( S \frac1{\phi_0} \Big) (s) = 0.
\end{equation*}
Hence, the term in parentheses in \eqref{app:2} is constant in $s$. We evaluate it at $s = \tfrac12$:
\begin{multline*}
  \int_0^1 \frac{ \log| t - \tfrac12 | }{ \phi_0(t) } \, dt
  = 2 \int_{\tfrac12}^1 \frac{ \log| t - \tfrac12 | }{ \sqrt{t(1-t)} } \, dt
  = 2 \int_0^1 \frac{ \log \tfrac r2 }{ \sqrt{1 - r^2} } \, dr \\
  = 2 \int_0^1 \frac{ \log r }{ \sqrt{1 - r^2} } \, dr - 2 \log 2 \int_0^1 \frac{dr}{ \sqrt{1 - r^2} }.
\end{multline*}
Both integrals are evaluated respectively in \cite[4.241.7]{GradshteynRyzhik14} and \eqref{Zwil}. This yields
\begin{align*}
  \int_0^1 \frac{ \log| r - \tfrac12 | }{ \phi_0(r) } \, dr
  = - 2 \pi \log 2.
\end{align*}
Inserting this result in \eqref{app:2} and using $u = \mathcal C_0 f$ and $\int_0^1 \frac1{\phi_0} = \pi$, we get
\begin{multline*} 
  \int_0^1 \int_0^1 \frac{ \log| t -  s| }{ \phi_0(t) } \,  u( s) \, d  s \, dt 
  = - 2 \pi \log 2 \int_0^1 u(s) \, ds \\
  = - \frac{2 \log 2}\pi \int_0^1 \frac{S ( \phi_0 f' )(s)}{\phi_0(s)} \, ds
    - \int_0^1 \frac{ f(s) }{ \phi_0(s) } \, ds.
\end{multline*}
It is left to show that the first term in the right-hand side equals $0$. Since $f' \in C^\alpha([0,1])$, we obtain this from Proposition \ref{prop:props:S}.\ref{prop:props:S:Fubini} and \eqref{EK00} by
\begin{equation*}
  \int_0^1 \frac1{\phi_0} S ( \phi_0 f' )
  = - \int_0^1 S \Big( \frac1{\phi_0} \Big) \, \phi_0 f'
  = 0.
\end{equation*}

\bibliographystyle{alpha}

\end{document}